\date{May 6, 2015}
\definecolor{SolutionColor}{rgb}{1,1,0.5}
\theoremstyle{plain}
\newtheorem{theorem}{Theorem}[section]
\newtheorem{corollary}[theorem]{Corollary}
\newtheorem{lemma}[theorem]{Lemma}
\newtheorem{proposition}[theorem]{Proposition}
\theoremstyle{definition}
\newtheorem{definition}[theorem]{Definition}
\newtheorem{remark}[theorem]{Remark}
\newtheorem{exemple}[theorem]{Example}
\renewcommand{\a} \alpha
\renewcommand{\b} \beta
\newcommand{\G}{\Gamma}
\newcommand{\QQ}{\mathbb{Q}}
\newcommand{\CC}{\mathbb{C}}
\newcommand{\ZZ}{\mathbb{Z}}
\newcommand{\vvarpi}{e^{2\pi i /3}}
\newcommand{\co}{\colon\,}
\newcommand{\x}{\times}
\newcommand{\la}{\langle}
\newcommand{\ra}{\rangle}
\newcommand{\Id}{\mathrm{Id}}
\newcommand{\Sym}{\mathrm{Sym}}
\newcommand{\res}{\mathrm{res}}
\DeclareMathOperator{\Hom}{Hom}
 \DeclareMathOperator{\diag}{diag}
\DeclareMathOperator{\GL}{GL}
\DeclareMathOperator{\PGL}{PGL}
\DeclareMathOperator{\PSL}{PSL}
\DeclareMathOperator{\SL}{SL}
\DeclareMathOperator{\tr}{tr}
\title{The $\SL(3,\CC)$-character variety of the figure eight knot}
\subjclass[2000]{Primary: 14D20. Secondary: 57M25, 57M27}
\keywords{Knots, characters, representations}
\author[M. Heusener]{Michael Heusener}
\address{Laboratoire de Math\'ematiques, UMR 6620 du CNRS, 
Universit\'e Blaise Pascal - Clermont II, F-63177 Aubi\`ere Cedex, France}
\email{Michael.Heusener@math.univ-bpclermont.fr}
\author[V. Mu\~{n}oz]{Vicente Mu\~{n}oz}
\address{Facultad de Matem\'aticas, Universidad Complutense de Madrid,
Plaza Ciencias 3, 28040 Madrid, Spain}
\email{vicente.munoz@mat.ucm.es}
\author[J. Porti]{Joan Porti}
\address{Departament de Matem\`atiques. Universitat Aut\`onoma de Barcelona, 08193 Bellaterra. Spain}
\email{porti@mat.uab.cat}
\begin{document}

\begin{abstract}
 We give explicit equations that describe the character variety of the figure eight knot 
 for the groups $\SL(3,\CC)$, $\GL(3,\CC)$ and $\PGL(3,\CC)$. This has five components of
 dimension $2$, one consisting of totally reducible representations, another one
 consisting of partially reducible representations, and three components of irreducible
 representations. Of these, one is distinguished as it contains the curve of
 irreducible representations coming from  $\Sym^2:\SL(2,\CC) \to \SL(3,\CC)$.
 The other two components are induced by exceptional Dehn fillings of the
 figure eight knot. We also describe the action of the symmetry group
 of the figure eight knot on the character  varieties.
\end{abstract}

\maketitle

\section{Introduction} \label{sec:intro}

Since the foundational work of Thurston  \cite{ThurstonNotes, ThurstonBullAMS}  and Culler and Shalen \cite{CS}, 
the varieties of representations and characters of three-manifold groups in $\SL(2,\CC)$
have been intensively studied, as they reflect geometric and topological properties of the three-manifold.
In particular they have been  used to study knots $K\subset S^3$, by analysing the
$\SL(2,\CC)$-character variety of the fundamental group of the knot complement
$S^3-K$ (these are called \emph{knot groups}). 

Much less is known for the character varieties of three-manifold groups in other Lie groups, notably
for $\SL(r,\CC)$ with $r\geq 3$. There has been an increasing interest for those
in the last years. For instance, inspired in the A-coordinates in higher 
Teichm\"uller theory of Fock and Goncharov \cite{FG06}, some authors have used
the so called Ptolemy coordinates for studying spaces of representations,
based on subdivisions of ideal triangulations of the three-manifold. Among others, 
we mention the work of  Dimofty, Gabella, Garoufalidis, Goerner, Goncharov, Thurston, 
and Zickert \cite{DGG,GTZ, GaroufalidisZickert, GaroufalidisGoenerZickert, DimoftyGaroufalidis}.
Geometric aspects of these representations, including volume and rigidity, have been addressed by 
Bucher, Burger, and Iozzi in \cite{BBI},
and by Bergeron, Falbel, and Guilloux in \cite{BFG}, who view these representations as holonomies of marked  flag
structures. We also recall the work 
Deraux and Deraux-Falbel in \cite{D1, D2, DF} to study CR and complex hyperbolic structures.
 
In a recent preprint,  
Falbel,  Guilloux,   Koseleff,  Rouillier,   and Thistlethwaite \cite{Falbel} compute 
the variety of characters of  the figure eight knot in $\SL(3,\CC)$  using the ideal triangulation approach.
We also compute  this variety of characters in this  paper, but with a completely different method and 
we obtain a different description. Here we describe it as an affine algebraic set with trace functions as 
coordinates.

The $\SL(3,\CC)$-character varieties for
free groups have also been studied in \cite{Lawton0,Lawton1,LawtonP,Will}, and 
the $\SL(3,\CC)$-character variety of torus knot groups has been determined in \cite{MunPor}.
Other local results have been proved in \cite{BFGKR,HM14,BenAH15,MenalP}.

% Moreover, it follows from Theorem~0.4 in \cite{MenalP} that the $\SL(r,\CC)$-character variety of
% a hyperbolic knot has at least one $(r-1)$-dimensional component.
% We will call such an algebraic component a \emph{distinguished component} 
% (see Remark~\ref{rem:distinguished}). Other existence results for irreducible representations were proved in \cite{HM14,BenAH15}.
%  

% 
% 
% 
% \comment{M: Perhaps we can put here something like the following paragraph ? It would put our work into a bigger frame\ldots
% 
% Recently, motivated by the work of Fock and Goncharov \cite{FG06} on higher Teichmüller spaces,
% some authors studied special coordinates on representation varieties coming from triangulations
% (see \cite{BFG,DGG,GGTZ}).
% Their goal is to understand structural properties of these coordinates, to decide which representations give rise to geometric structures, and to provide explicit calculations.
% 
% 
% In the  articles \cite{MenalP,BF3,HeusenerP,HM14} the authors are pursuing a different approach to investigate representation varieties: starting with a representation of the fundamental group of a $3$-manifold which has a geometric meaning, they study the deformations of this representation.
% In same cases it is possible to describe the local structure of the representation and character varieties.}

For $\G$ a finitely generated group, and for $G=\SL(r,\CC)$, $\GL(r,\CC)$, or 
$\PGL(r,\CC)$, the \emph{variety of representations}  is denoted by $ R(\Gamma, 
G)$. It is an algebraic affine set, the action of $G$ by conjugation is 
algebraic and the affine GIT quotient is naturally identified with the 
\emph{variety of characters} $X(\Gamma, G)$ \cite{LM}. Notice that both $ 
R(\Gamma, G)$ and $X(\Gamma, G)$ can be reducible (hence not varieties in the 
usual sense), and their defining polynomial ideals may be non-radical (when this 
happens they are said to be scheme non-reduced), cf.~\cite{LM}. When 
$G=\SL(r,\CC)$, points in $X(\Gamma, \SL(r,\CC))$ are precisely characters of 
representations, i.e. for $\rho\in R(\Gamma, G)$ its character is the map $ 
\chi_\rho\co \Gamma\to \CC$ defined by $\chi_\rho(\gamma)=\tr(\rho(\gamma))$, 
$\forall\gamma\in\Gamma$  \cite{LM}.

% Let us introduce some notation.
% Let $\G$ be a finitely presented group, and let $G=\SL(r,\CC)$, $\GL(r,\CC)$ or $\PGL(r,\CC)$. 
% A \textit{representation} of $\G$ in $G$ is a homomorphism $\rho: \G\to G$.
% Consider a presentation $\G=\la x_1,\ldots, x_k | r_1,\ldots, r_s \ra$. Then $\rho$ is completely
% determined by the $k$-tuple $(A_1,\ldots, A_k)=(\rho(x_1),\ldots, \rho(x_k))$
% subject to the relations $r_j(A_1,\ldots, A_k)=\id$, $1\leq j \leq s$. The space
% of representations is
%  \begin{eqnarray*}
%  R(\G,G) &=& \Hom(\G, G) \\
%   &=& \{(A_1,\ldots, A_k) \in G^k \, | \,
%  r_j(A_1,\ldots, A_k)=\id, \,  1\leq j \leq s \}\subset G^{k}\, .
%  \end{eqnarray*}
% Therefore $R(\G,G)$ is an affine algebraic set.
% We say that two representations $\rho$ and $\rho'$ are
% equivalent if there exists $P\in G$ such that $\rho'(g)=P^{-1} \rho(g) P$,
% for every $g\in G$. 
% This produces an action of $\PGL(r,\CC)$ in $R(\G,G)$. The moduli space of representations
% is the GIT quotient
%  $$
%  M(\G,G) = R(\G,G) \sslash G \, .
%  $$
 \begin{definition}\label{def:reducible}
A representation $\rho$ is \textit{reducible} if there exists some proper 
subspace $V\subset \CC^r$ such that for  all $g\in G$ we have 
$\rho(g)(V)\subset V$;  otherwise $\rho$ is
\textit{irreducible}. A \emph{semisimple} representation is  a direct sum of 
irreducible representations. 

A representation $\rho\co\Gamma\to\SL(3,\CC)$ is called \emph{partially reducible} 
(respectively \emph{totally reducible}) if it is the sum of a one-dimensional and two-dimensional irreducible representation (respectively a sum of three 
one-dimensional representations).
\end{definition}
% 
% The space $M(\G,G)$ parametrizes orbits of semisimple representations
% \cite[Thm.~ 1.28]{LM}. Suppose now that $G=\SL(r,\CC)$. 
% %Here a representation is called \emph{semisimple} if it is the sum of irreducible representations.
% Given a representation $\rho: \G\to G$, we define its
% \textit{character} as the map $\chi_\rho: \G\to \CC$,
% $\chi_\rho(g)=\tr \rho (g)$. Note that two equivalent
% representations $\rho$ and $\rho'$ have the same character.
% There is a character map $\chi: R(\G,G)\to \CC^\G$, $\rho\mapsto
% \chi_\rho$, whose image
%  $$
%  X(\G,G)=\chi(R(\G,G))
%  $$
% is called the \textit{character variety of $\G$}. 
% The natural algebraic map $M(\G,G)\to X(\G,G)$ 
% is an  isomorphism (see Chapter 1 in \cite{LM}). 
% As the space of representations is Noetherian, 
% there exists a finite collection $g_1,\ldots, g_t \in G$ 
% such that $\chi_\rho$ is determined by $\chi_\rho(g_1),\ldots,
% \chi_\rho(g_t)$, for any $\rho$. Such collection gives an isomorphism
%  $$
%  X(\G,G) \cong \mathbf{V}(I) \subset \CC^t,
%   $$
% where the coordinates are $x_i(\rho)=\chi_\rho(g_i)$ and $I$ is the ideal
% of relations satisfied by the $x_i$ on $X(\G,G)$. 
% \begin{remark}
% Notice that the algebraic equations defining the representation
% variety may be non-reduced, hence there are underlying affine schemes 
% $\mathcal{R}( \Gamma, \mathrm{SL}_n( \mathbf C))$ and
% $\mathcal{X}( \Gamma, \mathrm{SL}_n( \mathbf C))$ with  possible non-reduced
% coordinate rings.
% \end{remark}

This paper focuses on the fundamental group of the complement of
the figure eight knot, denoted by  $\Gamma$. 
We consider the following two natural presentations:
\begin{align}
 \Gamma & \cong \la a,b,t \mid tat^{-1} =ab , t b t^{-1}=bab \ra\,, \label{eq:pres-fiber}\\
 & \cong \la S,T \mid ST^{-1}S^{-1}T\,S = T\, ST^{-1}S^{-1}T \ra\,. \label{eq:pres-2bridge}
\end{align}
The former comes from the fibration of the three-manifold over the circle, the latter   from knot theory \cite{Rolfsen, BZH2013},
as the generators are meridian loops.
The presentations are related by
\[
\begin{cases}
S&= t, \\
T&= a^{-1} t  a,
\end{cases} 
\quad\text{ and }\quad
\begin{cases}
t&= S ,\\
a&= T^{-1}STS^{-1},\\
b&= T S^{-1}.
\end{cases} 
 \]
The figure eight knot exterior fibres over the circle, with fibre a punctured torus, this explains 
Presentation~\eqref{eq:pres-fiber}, as the group of the fiber is freely generated by $a$ and $b$.
This free group
$F_2=\langle a, b\rangle$  is the kernel of the abelianisation 
$\varphi\co\Gamma\to\Gamma_\mathrm{ab}\cong\ZZ$, which is given by 
 \begin{equation}\label{eqn:varphi}
  \varphi(S)=\varphi(T)=1, \quad \text{ and } \quad
\varphi(t)=1,\  \varphi(a)=\varphi(b)=0,
 \end{equation}
respectively; so $F_2$ is also the commutator subgroup of $\Gamma$.
Hence, for any representation $\rho\co\Gamma \to G$
with $G= \SL(r,\CC)$, $\GL(r,\CC)$, or $\PGL(r,\CC)$,  we have
\begin{equation}
 \label{eqn:rhoF2inSL}
\rho(F_2)=\langle\rho(a),\rho(b) \rangle\subset \SL(r,\CC)\, . 
 \end{equation}
 
Presentation~\eqref{eq:pres-2bridge} is the usual presentation for a 2-bridge knot group, 
as the figure eight knot is the $2$-bridge knot $\mathfrak{b}(5,3)$ in Schubert's notation (see \cite[12.A]{BZH2013}).
%because $F_2$ is the commutator subgroup $[\Gamma,\Gamma]$.
%More precisely, $F_2$ is the kernel of the Abelianization 
%$\Gamma\to\Gamma_\mathrm{ab}\cong\ZZ$ which is given by $S\mapsto 1$ and $T\mapsto 1$.
%
%Let  $\mu_r = \langle \varpi\rangle \subset \CC^* $  denote the group of $r$-th roots of unit.
%For every knot group $\Gamma$ we have
%\begin{align*}
% X(\Gamma,\GL(r,\CC)) &=
% (X(\Gamma,\SL(r,\CC)) \x \CC^*)/\mu_r \, ,\\
% & = X(\Gamma,\SL(r,\CC)) \x_{\mu_r} \CC^* \,.
%\end{align*}
%\colorbox{yellow}{Add proof}\\
%%
%%where $(A,B,T,\lambda) \in X(\Gamma,\SL(r,\CC)) \x \CC^*$ is sent to 
%%$(A,B,\lambda T) \in X(\Gamma,\GL(r,\CC))$, and $\varpi\in\mu_r$ acts 
%%as $(A,B,T,\lambda) \mapsto (A,B, \varpi T, \varpi^{-1}\lambda)$.
%%Here $A$, $B$, and $T\in G$ denote the images of $a$, $b$, and $t$ respectively, 
%%and $\mu_r\subset \CC^* $ the group of $r$-th roots of unit.
%%
%Moreover, for a knot group $\Gamma$ every representation 
%$\Gamma\to\PGL(r,\CC) \cong \PSL(r,\CC)$ lifts to a representation to
%$\Gamma\to\SL(r,\CC)$. This follows from $H^2(\Gamma; \mu_r)=0$.
%Moreover, the lifts are parametrized by $H^1(\Gamma; \mu_r)\cong \mu_r$.
%Thus
% $$
% X(\Gamma,\PSL(r,\CC)) =  X(\Gamma,\SL(r,\CC)) /\mu_r,
% X(\Gamma,\GL(r,\CC))/ \CC^*  \, ,
% $$ 
%where $\mu_r$ and $\CC^*$ are acting by multiplication.
%
%By the above, we have for every fibered knot group and in all cases  $G= \SL(r,\CC)$,  $\GL(r,\CC)$ or $\PGL(r,\CC)$, a restriction map 
% $$
% X(\Gamma, G) \to X([\Gamma,\Gamma], \SL(r,\CC)).
% $$
% Recall that the commutator group is finitely generated if and only if the knot is fibered.
In particular $X(\Gamma,\SL(3,\CC))$ is a subvariety of the 
$\SL(3,\CC)$-character variety of the free group of rank $2$
generated by $S$ and $T$, and
results of \cite{Lawton0,Will} will apply, see Proposition~\ref{prop:parametersLawton}.
More precisely, we consider the algebraic map
 $X(\Gamma,\SL(3,\CC))\to \CC^8$ defined by
 $\chi\mapsto \big(y(\chi), \bar y(\chi), z(\chi), \bar z(\chi), \alpha(\chi),\bar\alpha(\chi),
 \beta(\chi),\bar\beta(\chi)\big)$, where
 \begin{alignat}{4}
 y(\chi) &= \chi(S),\ & \bar{y}(\chi) &=\chi(S^{-1}),\ & z(\chi)&=\chi(ST),\ &\bar{z}(\chi)&=\chi(S^{-1}T^{-1}), \notag\\
 \alpha(\chi) &= \chi([T,S]),\ & \bar\alpha(\chi) &= \chi([S,T]),\ &
\beta(\chi) &= \chi(S^{-1} T),\ &\bar{\beta}(\chi) &=  \chi(S T^{-1}).  \label{eq:parameters}
\end{alignat} 
We see in Proposition~\ref{prop:parametersLawton}
 that they define coordinates  for $X(\Gamma,\SL(3,\CC))$ as subvariety of  $\CC^8$.
Using Presentation~\eqref{eq:pres-fiber}, those coordinates are:
\begin{alignat}{4}
\alpha(\chi)&=\chi(a),\quad &   \beta(\chi)&=\chi(b), \quad &  y(\chi) &=\chi(t), \quad & z(\chi)&=\chi(t a^{-1}t a)=\chi(t^2b), \notag\\
\bar \alpha(\chi)&=\chi(a^{-1}),\quad &   \bar \beta(\chi)&=\chi(b^{-1}), \quad & 
 \bar y(\chi) &=\chi(t^{-1}), \quad & \bar z(\chi)&=\chi(a^{-1} t^{-1} a t^{-1}  )= \chi(b^{-1} t^{-2}). \label{eq:parametersfib}
\end{alignat}

%
% 
% 
% \begin{proposition}\label{prop:pre-parameters}
% Let $\Gamma$ be a 2-bridge knot group. Then the character variety 
% $X(\Gamma,\SL(3,\CC))$ can be embedded in $\CC^9$. More precisely, the embedding is given by
% \[
% \chi\mapsto \big(\chi(S), \chi(S^{-1}), \chi(ST), \chi(S^{-1}T^{-1}), \chi(S T^{-1}), \chi(S^{-1} T)
% \chi([S,T]), chi([T,S])\big)\,. 
% \]
%\end{proposition}

Throughout this paper $\mu_3=\{1,\varpi,\varpi^2\}\subset\CC^*$,
$\varpi=e^{2\pi i/3}$, denotes  the group of the third roots of unity. 
We  identify $\mu_3$ with the center of $\SL(3,\CC)$, consisting of diagonal matrices, and for any knot group 
$\Gamma$, it acts on 
$R(\Gamma,\SL(3,\CC))$ and $X(\Gamma,\SL(3,\CC))$ via
 \[ 
 (\varpi\rho) (\gamma) = \varpi^{\varphi(\gamma)}\rho(\gamma) \quad 
 \text{ and } \quad
 (\varpi\chi) (\gamma) = \varpi^{\varphi(\gamma)} \chi(\gamma),
 \]
respectively,  where $\varphi\co\Gamma\to\ZZ$ is the abelianization map in (\ref{eqn:varphi}).
% The action of the generator of the center $\varpi\in\mu_3$ is given by
% \[ \varpi\cdot  (y,\bar y,z,\bar z,\alpha,\bar\alpha,\beta,\bar\beta)=
% (\varpi y,\varpi^2\bar y,\varpi^2 z,\varpi\bar z,\alpha,\bar\alpha,\beta,\bar\beta)\,.\]
The action of the generator of the center $\varpi\in\mu_3$ in coordinates is
\[ \varpi\cdot  (y,\bar y,z,\bar z,\alpha,\bar\alpha,\beta,\bar\beta)=
(\varpi y,\varpi^2\bar y,\varpi^2 z,\varpi\bar z,\alpha,\bar\alpha,\beta,\bar\beta)\,.\]
 
The main result of this paper is the following.
\begin{theorem}
\label{thm:main} Let $\Gamma$ be the group of the figure eight knot.
 The character variety $X(\Gamma,\SL(3,\CC))\subset\CC^8$ has five algebraic components.
 They are described in terms of the coordinates \eqref{eq:parameters}
% \begin{alignat*}{4}
% y &= \chi(S),\ & \bar{y} &=\chi(S^{-1}),\ & z&=\chi(ST),\ &\bar{z}&=\chi(S^{-1}T^{-1}), \\
%\beta &= \chi(S^{-1} T),\ &\bar{\beta} &=  \chi(S T^{-1}),\ & \alpha &= \chi([T,S]),\ &
%\bar\alpha &= \chi([S,T])
%\end{alignat*}
 as follows:
  \begin{enumerate}
  \item The component $X_\mathrm{TR}$  corresponding to totally reducible representations is described by:
  \[ 
  \alpha=\bar\alpha=\beta=\bar\beta=3,\quad (y,\bar{y})\in \CC^2,\quad  
  z=y^2-2\bar{y}, \quad \bar{z}=\bar{y}^2-2y\,.
  \]
  The component $X_\mathrm{TR}$ is smooth and isomorphic to $\CC^2$.
  %\colorbox{yellow}{ I have changed $z$ and $\bar z$}
  \item The component $X_\mathrm{PR}$ corresponding to partially reducible representations is parametrized by the smooth variety 
  \[ \mathcal{P} =\{ (v,w,x_1)\in \CC\x\CC^*\x(\CC-\{1\})\mid \frac{x_1^2 +x_1-1}{x_1-1} w = v^2\}\,.\]
More precisely, a parametrization 
$\Phi\co\mathcal{P}\to X_\mathrm{PR}$ is given by:
  \begin{alignat*}{2}
    \alpha(v,w,x_1)=\bar\alpha(v,w,x_1)&= x_1+1, \quad & 
    \beta(v,w,x_1)=\bar\beta(v,w,x_1)&=\frac{x_1}{x_1-1} + 1 ,\\ 
  y(v,w,x_1)&=v+\frac1w,\quad &  \bar{y}(v,w,x_1)&=w+\frac{v}{w}, \\
z(v,w,x_1)&=w\, \alpha+\frac{1}{w^2},\quad &
\bar z(v,w,x_1)&=\frac{\alpha}{w}+w^2\,.
  \end{alignat*}  
The component $X_\mathrm{PR}$ is smooth except at the three points
\[
(y,\bar y,z,\bar z,\alpha,\bar\alpha,\beta,\bar\beta)\in \mu_3 \cdot(4,4,8,8,3,3,3,3)\,.
\]
%where there are nodes (two smooth branches intersecting transversally at the point).
  \end{enumerate}
  There are three components $V_0$, $V_1$ and $V_2$
corresponding to irreducible representations. 
\begin{enumerate}\setcounter{enumi}{2}
\item The distinguished component $V_0$ 
is the zero set of the ideal generated by the following equations:
  \begin{align*}
   \alpha &=\bar\alpha ,\quad \beta=\bar\beta , \\ 
   %\eta^2 - P \eta + Q =0, \\
%    &  yz = (\alpha+1)(\beta+1), \\
%    & y^3 + z^3 = \alpha^2\beta+\alpha\beta^2+ 6 \alpha\beta+3 \alpha+ 3 \beta+2, \\
%    & (\alpha-\beta) (2\eta-(\alpha^2\beta^2-2\alpha^2\beta-2\beta^2\alpha+2\alpha^2+2\beta^2-3))= \\
%     & \qquad \qquad =(\alpha\beta-2\alpha-2\beta+3 )
% (2y^3-(\alpha^2\beta+\alpha\beta^2+ 6 \alpha\beta+3 \alpha+ 3 \beta+2)), \\
%  &***********************
%   \end{align*}
%   
%     \begin{align*}
     y  \bar{y} &= (\alpha+1)(\beta+1), \\
       z\bar z&= 2\alpha^2\beta+\alpha^2+1, \\
    y^3 + \bar{y}^3 &= \alpha^2\beta+\alpha\beta^2+ 6 \alpha\beta+3 \alpha+ 3 \beta+2, \\
 z^3 +\bar z^3 &= \alpha^4 \beta^2  + 10 \alpha^2\beta+ 9\alpha^2 - 2 \alpha^3 -2  ,\\
 yz +\bar y \bar z &= \alpha^2\beta+3\alpha\beta+ 3\alpha+1, \\
  \bar{y}^{2} z + y^{2} \bar{z} &=  \alpha^{2} \beta^{2}  + 4
\alpha^{2} \beta + 2 \alpha^{2} + 4 \alpha \beta + 2 \alpha + 2 \beta + 1,\\
 \bar{y} z^{2} + y\bar{z}^{2} &= \alpha^{3} \beta^{2} +  \alpha^{3} \beta +  4 \alpha^{2} \beta 
+ 3 \alpha^{2} + 5 \alpha \beta + 3\alpha - 1\,.
%   & (\alpha-\beta) (2\eta-(\alpha^2\beta^2-2\alpha^2\beta-2\beta^2\alpha+2\alpha^2+2\beta^2-3))= \\
%    & \qquad \qquad \qquad \qquad =(\alpha\beta-2\alpha-2\beta+3 )
%(2y^3-(\alpha^2\beta+\alpha\beta^2+ 6 \alpha\beta+3 \alpha+ 3 \beta+2)), \\
% &(1-\alpha)(2\eta-(\alpha^2\beta^2-2\alpha^2\beta-2\beta^2\alpha+2\alpha^2+2\beta^2-3))= \\
% &\qquad\qquad \qquad \qquad  = (\alpha\beta-2\alpha-2\beta+3 ) (2\bar{y}\bar z 
% - (\alpha^2\beta+3\alpha\beta+ 3\alpha+1)), \\
% &
% (\alpha^3\beta+3 \alpha^2-4 \alpha)(2\eta-
% (\alpha^2\beta^2-2\alpha^2\beta-2\beta^2\alpha+2\alpha^2+2\beta^2-3)) = \\
%&\qquad \qquad \qquad \qquad = (\alpha\beta-2\alpha-2\beta+3 ) (2z^3 -
%(\alpha^4 \beta^2  + 10 \alpha^2\beta+ 9\alpha^2 - 2 \alpha^3-2)) ,
  \end{align*} 
% where $P,Q$ are given in Lemma \ref{lem:Wj}. This is called distinguished because
% it contains the image of the irreducible characters via $\Sym^2: \SL(2,\CC)\to \SL(3,\CC)$.
\item   The first non distinguished component  $V_1$: 
\begin{alignat*}{2}
    \alpha&=\bar\alpha=1, &
     y\bar{y} &= \beta+\bar \beta+2, \\
    y^3 + \bar{y}^3 &= \beta\bar\beta+ 5 \beta+ 5 \bar\beta+ 5,\quad &
    \bar{z}&=y,\quad z=\bar{y}\,.
   \end{alignat*}
\item   The second non distinguished component $V_2$: 
\begin{alignat*}{2}
    \beta&=\bar\beta=1 ,  &
     y\bar{y} &= \alpha+\bar \alpha+2, \\
    y^3 + \bar{y}^3 &=  \alpha\bar\alpha + 5 \alpha+ 5 \bar\alpha+ 5, \quad &
%   & \eta  =\bar{y}^3 -3(\alpha+\bar\alpha+1), \\
    z &= y^2 - \bar{y},\quad \bar{z} = \bar{y}^2 -y\,.
   \end{alignat*}
\end{enumerate}   
The components $V_i$, $i=0,1,2$,  are smooth except at the three points:
$\mu_3 \cdot(2,2,2,2,1,1,1,1)$.  
\end{theorem}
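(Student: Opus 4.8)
The plan is to exploit the fibered structure behind Presentation~\eqref{eq:pres-fiber} together with the one-relator description coming from Presentation~\eqref{eq:pres-2bridge}, organizing the computation according to the type of the representation. Writing $w=ST^{-1}S^{-1}T$, the relation of \eqref{eq:pres-2bridge} reads $wS=Tw$, so $\Gamma$ is the quotient of $F_2=\langle S,T\rangle$ by the single relator $r=wSw^{-1}T^{-1}$; hence $R(\Gamma,\SL(3,\CC))\subset R(F_2,\SL(3,\CC))$ is cut out by $\rho(r)=\Id$, and $X(\Gamma,\SL(3,\CC))$ is its image in $X(F_2,\SL(3,\CC))$. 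By Proposition~\ref{prop:parametersLawton} the eight trace functions \eqref{eq:parameters} embed $X(\Gamma,\SL(3,\CC))$ into $\CC^8$; here $\chi(T)=\chi(S)=y$ and $\chi(T^{-1})=\chi(S^{-1})=\bar y$ because $S$ and $T$ are conjugate meridians, which is exactly what frees the two commutator traces $\alpha,\bar\alpha$ to serve as coordinates. I will first dispose of the two reducible components by explicit constructions, and then attack the irreducible locus through the relator equations.

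For $X_\mathrm{TR}$, a totally reducible $\rho$ is a sum of three characters $\Gamma\to\CC^*$; since $a,b\in F_2=[\Gamma,\Gamma]$ these are trivial on $a,b$, so $\rho(a)=\rho(b)=\Id$ and $\rho(t)$ is diagonal of determinant $1$. Then $\alpha=\bar\alpha=\beta=\bar\beta=3$, while Newton's identities for the eigenvalues of $\rho(t)$ give $z=\chi(t^2)=y^2-2\bar y$ and $\bar z=\bar y^2-2y$, identifying $X_\mathrm{TR}$ with $\CC^2$. For $X_\mathrm{PR}$, a partially reducible $\rho$ splits as $\lambda\oplus\sigma$ with $\lambda\co\Gamma\to\CC^*$ a character and $\sigma\co\Gamma\to\GL(2,\CC)$ irreducible satisfying $\lambda\cdot\det\sigma=1$. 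I would parametrize such $\sigma$ by the classical $\GL(2,\CC)$-character data of the figure eight, set $\lambda=(\det\sigma)^{-1}$, and expand the functions \eqref{eq:parameters} of $\lambda\oplus\sigma$ in terms of that data; collecting the relations produces the smooth surface $\mathcal P$, cut out by $\frac{x_1^2+x_1-1}{x_1-1}\,w=v^2$, together with the parametrization $\Phi\co\mathcal P\to X_\mathrm{PR}$.

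For the irreducible components the key tool is that, by Burnside, an irreducible $\rho$ has $\rho(\Gamma)$ spanning $M_3(\CC)$, so $\rho(r)=\Id$ is equivalent to the trace conditions $\chi(ru)=\chi(u)$ for $u$ ranging over nine words whose images form a basis of $M_3(\CC)$. The Cayley--Hamilton identity $M^3-\tr(M)M^2+\tr(M^{-1})M-\Id=0$ for $M\in\SL(3,\CC)$ then rewrites each $\chi(ru)$ as a polynomial in the eight coordinates, and intersecting the resulting ideal with the relations of Proposition~\ref{prop:parametersLawton} gives the defining ideal of the irreducible locus, whose primary decomposition yields $V_0$, $V_1$, $V_2$. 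To identify the pieces I would use the fibration $1\to F_2\to\Gamma\to\ZZ\to1$, where $t$ acts by $\phi(a)=ab$, $\phi(b)=bab$: when $\rho|_{F_2}$ is irreducible its character is $\phi$-invariant and the conjugating matrix $\rho(t)$ is unique up to the center $\mu_3$, which explains both the $\mu_3$-action and the three-to-one behaviour; the distinguished $V_0$ is the component carrying the curve $\Sym^2\circ\rho_2$ from $\Sym^2\co\SL(2,\CC)\to\SL(3,\CC)$, whose self-duality forces $\alpha=\bar\alpha$ and $\beta=\bar\beta$, while on $V_1$ and $V_2$ the equations force $\rho(a)$, respectively $\rho(b)$, to have eigenvalues $\{1,i,-i\}$, corresponding to the representations issued from the exceptional Dehn fillings.

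Smoothness and the singular loci are then a Jacobian computation on each explicit ideal. I would check that the Jacobian of the listed generators drops rank exactly on the asserted orbits, and identify these geometrically: $\mu_3\cdot(4,4,8,8,3,3,3,3)$ lies on $X_\mathrm{TR}$ and is precisely where $X_\mathrm{PR}$ meets $X_\mathrm{TR}$, while $\mu_3\cdot(2,2,2,2,1,1,1,1)$ lies simultaneously on $V_0,V_1,V_2$ and is their common triple intersection, which accounts for the singularity of each $V_i$ there. The main obstacle is the irreducible part: first, controlling the necessarily computer-assisted elimination so as to prove that the displayed polynomials generate the full radical ideal of each component---not merely cut it out set-theoretically---and that the decomposition is exhaustive, with no component overlooked; and second, rigorously handling the irreducible representations whose restriction to $F_2$ is \emph{reducible}, which fall outside the clean $\phi$-equivariance argument and must be shown to contribute nothing beyond $V_0,V_1,V_2$.
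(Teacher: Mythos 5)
Your treatment of $X_\mathrm{TR}$ and $X_\mathrm{PR}$ coincides with the paper's (Sections~\ref{section:SL2} and \ref{sec:reducible}), but for the irreducible locus you take a genuinely different route: a one-relator elimination, turning $\rho(r)=\Id$ into nine trace identities via Burnside and Cayley--Hamilton, followed by primary decomposition. The paper instead restricts to the fiber and computes the monodromy-fixed characters of $F_2$ in Lawton coordinates (Lemmas~\ref{lem:projection} and \ref{lem:Wj}, giving $W_0,W_1,W_2$), and then recovers $y,\bar y,z,\bar z$ by explicitly solving $TA=ABT$, $TB=BABT$ for the conjugating matrix on a normal-form slice (Propositions~\ref{prop:eta-non-distinguished-V2}, \ref{prop:eta-non-distinguished-V1}, \ref{prop:distinguishedComp}), exhibiting each $V_i$ as a $3$-to-$1$ branched cover of $W_i$ and deriving the displayed equations from the $\mu_3$- and duality-invariant functions $f_i,h_j$. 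Your route is viable in principle, but note a caveat you do not address: the equivalence of $\rho(r)=\Id$ with the nine conditions $\chi(ru)=\chi(u)$ holds only where the images of the chosen words span $M_3(\CC)$, so the elimination ideal can acquire spurious components supported on the degeneracy locus, which must be pruned; the paper's fixed-point method never faces this.

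Two steps of your plan have genuine gaps. First, smoothness cannot be settled by ``a Jacobian computation on each explicit ideal'': the ideal listed in item~(3) for $V_0$ is \emph{not radical} (Remark~\ref{rem:radical}), so the Jacobian criterion applied to those generators does not compute the singular locus of $V_0$; you would need the radical first, and even then the paper does not argue this way. Its smoothness and reducedness statement (Proposition~\ref{prop:smooth}) is cohomological: Lemma~\ref{lemma:h1inj} uses the Lyndon--Hochschild--Serre spectral sequence for $1\to F_2\to\Gamma\to\ZZ\to 1$ to identify $T^{Zar}_\chi X(\Gamma,\SL(3,\CC))$ with the monodromy-fixed tangent space of $X(F_2,\SL(3,\CC))$, smoothness of the $W_i$ away from the discriminant does the rest at $F_2$-irreducible points, the five metabelian points are smooth by \cite{BF3}, and reducible characters are handled through twisted Alexander polynomials (Lemma~\ref{lemm:redtoirreduc}, using \cite{HeusenerP,HPS,HPAGT}); this is also what pins the singular set down to $\mu_3\cdot(2,2,2,2,1,1,1,1)$, i.e.\ $(\alpha,\beta)=(1,1)$. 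Second, you flag but do not resolve the irreducible characters whose restriction to $F_2$ is reducible. The paper resolves this structurally: such a restriction must be \emph{totally} reducible with $\rho(t)$ cyclically permuting the three invariant lines, so $\rho$ is metabelian and $y=\bar y=0$ (Lemma~\ref{lem:redired}); Boden--Friedl's classification gives exactly five such characters (Corollary~\ref{cor:redired}), which are precisely the branch points of $\res\co X_\mathrm{irr}\to W$, whence the count of three components $V_0,V_1,V_2$ (Proposition~\ref{prop:3components}). Without this input, your primary decomposition yields candidate equations but no proof that the decomposition into exactly five components is exhaustive, nor control of the branch locus where your $\phi$-equivariance argument breaks down.
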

 
The component $V_0$ is called distinguished because it contains the composition of the holonomy
representation of the complete hyperbolic structure with the irreducible representation 
$\Sym\co \PSL(2,\CC)\to \SL(3,\CC)$. The components $V_1$ and $V_2$ factor through Dehn fillings of the knot, 
with respective slopes $\pm 3$, see Proposition~\ref{prop:non-dist-components}.
In particular they do not contain faithful representations.
 
 \begin{remark}
The ideal generated by the equations in item~(3) of Theorem~\ref{thm:main} is not radical. 
 Generators of the radical are given in Remark~\ref{rem:radical}, those are the defining polynomials
 of the variety of characters, as we know that $V_0$ is scheme reduced, by Proposition~\ref{prop:smooth}.
 \end{remark}
 
The intersections of the components are as follows:
 \begin{itemize}
  \item $X_\mathrm{TR}\cap X_\mathrm{PR}$ is the curve $\alpha=\bar\alpha=\beta=
\bar\beta=3$, $y^2 \bar{y}^2 - 5 y^3 - 5 \bar{y}^3 + 28 y \bar{y} - 64$, $z = y^2-2\bar{y}$, and 
$\bar{z}=\bar{y}^2-2y$.
This curve is smooth except at the three points $\mu_3\cdot(4,4,8,8,3,3,3,3)$.
\item $X_\mathrm{TR} \cap V_1 = X_\mathrm{TR} \cap V_2 = \emptyset$.
\item $X_\mathrm{TR}\cap V_0 = \mu_3\cdot(4,4,8,8,3,3,3,3)$.
  \item The intersections $X_\mathrm{PR}\cap V_1=X_\mathrm{PR}\cap V_2=
  V_0\cap V_1\cap V_2$ consists of three points $\mu_3\cdot(2,2,2,2,1,1,1,1)$.
  These three points are singular points of $V_i$, $i=0,1,2$, and they are regular points on 
 $X_\mathrm{PR}$.
  \item $ X_\mathrm{PR}\cap V_0$ is the curve given by the equations (2) of Theorem~\ref{thm:main}
  and the equation
%   \[  v = \frac{1}{2} \left(w^2 + \frac{1}{w}\right) \ \Longleftrightarrow\ w^3 -2vw +1 =0\,.\]
  \(   w^3 -2vw +1 =0\).
  This curve is non singular except at the three points
\[V_0\cap X_\mathrm{PR}\cap X_\mathrm{TR} = \mu_3\cdot (4,4,8,8,3,3,3,3)\,.\]
  \item $V_0\cap V_1$ is the curve $\alpha=\bar\alpha=1$, $\beta=\bar \beta$, $y \bar{y}= 2\beta+2$,
  $y^3+\bar{y}^3=\beta^2+10\beta+5$. This curve is nonsingular except at 
  $\mu_3\cdot (2,2,2,2,1,1,1,1)$.
  \item $V_0\cap V_2$ is the curve $\alpha=\bar\alpha$, $\beta=\bar \beta=1$, $y \bar{y}= 2\alpha+2$,
  $y^3+\bar{y}^3=\alpha^2+10\alpha+5$. This curve is nonsingular except at 
  $\mu_3\cdot (2,2,2,2,1,1,1,1)$.
\end{itemize}

%The component $V_0$ is smooth except at six points: three of them are again   $V_1\cap V_2=V_0\cap V_1\cap V_2$; the three others
%% 
%% given by  $\alpha=\beta=1$, $\eta=-1$ and  $(y,\bar y)=(\bar z, z)=(2,2)$, $(2\varpi,2\varpi^2)$, or 
%%   $(2\varpi^2,2\varpi)$, and three other by 
%are given by $\alpha=\beta=\eta=3$ and $(y,\bar y,z,\bar z)= (4,4,8,8)$, $(4\varpi,4\varpi^2,8 \varpi^2,8\varpi)$, or 
%  $(4\varpi^2,4\varpi,8\varpi,8\varpi^2)$.
 
%The existence of a two-dimensional distinguished component was proven in \cite{MenalP}. The non-distinguished components occur as representations
%of two exceptional Dehn fillings on the figure eight knot exterior, as it is explained in detail in Section~\ref{section:dehn}.

To obtain the irreducible components, we consider first the restriction of those characters to the group of the fiber
$F_2=\langle a, b\rangle$ (Presentation~\eqref{eq:pres-fiber}) by considering the characters that are fixed by the action of the monodromy. Here we use 
Lawton's coordinates for $X(F_2,\SL(3,\CC))$. This allows to distinguish three components of irreducible characters,
that are worked out explicitly.

The paper is organized as follows. Section~\ref{sec:general} is devoted to 
generalities on character varieties of knot groups.
Representations of $\Gamma$ in $\SL(2,\CC)$, $\GL(2,\CC)$ and $\PGL(2,\CC)$ are 
discussed in 
Section~\ref{section:SL2}, and reducible representations in $\SL(3,\CC)$, in 
Section~\ref{sec:reducible}. Section~\ref{sec:Lawton}
is devoted to the description of the restriction to the variety of characters of 
$F_2$ as fixed points of the monodromy.
Then the non-distinguished and the distinguished components are computed 
respectively in Sections~\ref{sec:nondist} and \ref{sec:distinguished}.
Section~\ref{sec:irredPGL3GL3} is devoted to characters in $\GL(3,\CC)$ and 
$\PGL(3,\CC)$. In Section~\ref{sec:symmetries} we describe
how the symmetry group of the figure eight knot acts on the variety of 
characters. In Section~\ref{section:dehn}
we identify the non-distinguished components as induced by Dehn fillings on the 
knot. Finally, in Section~\ref{sec:explicite}
we discuss explicit representations that are relevant.

Some of the computations require software, either Sage \cite{sage} or  Mathematica 
\cite{mathematica}. All worksheets and notebooks can be found in 
\cite{fig8html}.

\subsection*{Acknowledgements} M.~Heusener and J.~Porti are partially supported by MINECO 
grant MTM2012-34834. V.~Mu\~noz is partially supported by MINECO grant MTM2012-30719.
  
\section{Character varieties of knot groups}
\label{sec:general}

Throughout this section we let $\Gamma$ denote any knot group (in the rest of the paper
it denotes the figure eigth knot exerior), and $\varphi\co\Gamma\to\ZZ$ 
the abelianization which maps the meridian of the knot to $1$ \eqref{eqn:varphi}.
The center $\mu_r$ of $\SL(r,\CC)$ consists of diagonal matrices and it
can be identified with the set of $r$-th roots of unity $\{\varpi^k\mid k=0,\ldots,r-1\}\subset \CC^*$. 
The center acts on 
$R(\Gamma,\SL(r,\CC))$ and $X(\Gamma,\SL(r,\CC))$ via
multiplication, i.e.\ for $\rho\in R(\Gamma,\SL(r,\CC))$,  $\chi\in X(\Gamma,\SL(r,\CC))$,
and $\varpi^k\in\mu_r$,  we have for all $\gamma\in\Gamma$:
\[
\varpi^k\cdot\rho(\gamma) = \varpi^{k\varphi(\gamma)}\rho(\gamma)
\qquad \text{ and }\qquad 
\varpi^k\cdot\chi(\gamma) = \varpi^{k\varphi(\gamma)}\rho(\gamma)\,.
\]

\begin{lemma}\label{lem:lift}
Let $\Gamma$ be a knot group and $\rho\co\Gamma\to\PSL(r,\CC)$ be a representation.
Then there exists a lift $\tilde\rho\co\Gamma\to\SL(r,\CC)$ of $\rho$. Moreover, all lifts of $\rho$ are 
given by $\mu_r\cdot\tilde\rho$.
\end{lemma}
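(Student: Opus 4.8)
The plan is to use obstruction theory for the central extension
\[
1\longrightarrow \mu_r\longrightarrow \SL(r,\CC)\longrightarrow \PSL(r,\CC)\longrightarrow 1,
\]
combined with the low-dimensional topology of the knot exterior. First I would recall that the obstruction to lifting a homomorphism $\rho\co\Gamma\to\PSL(r,\CC)$ along this central extension is a well-defined class $o(\rho)\in H^2(\Gamma;\mu_r)$, where $\mu_r$ carries the trivial $\Gamma$-action because it is central. Concretely, after choosing any set-theoretic lift $\hat\rho\co\Gamma\to\SL(r,\CC)$, the map $(\gamma_1,\gamma_2)\mapsto \hat\rho(\gamma_1)\hat\rho(\gamma_2)\hat\rho(\gamma_1\gamma_2)^{-1}$ takes values in $\mu_r$ and is a $2$-cocycle whose cohomology class is $o(\rho)$; a genuine homomorphic lift exists if and only if $o(\rho)=0$.

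The key point is then to show $H^2(\Gamma;\mu_r)=0$. Since $\Gamma$ is a knot group, the knot exterior $X$ is aspherical (a classical consequence of the sphere theorem), so $X$ is a $K(\Gamma,1)$ and group cohomology agrees with the cohomology of $X$, giving $H^2(\Gamma;\mu_r)\cong H^2(X;\mu_r)$. I would then invoke the standard homology of a knot exterior, namely $H_1(X)\cong\ZZ$ (generated by the meridian) and $H_2(X)=0$, both of which follow from Alexander duality. The universal coefficient sequence
\[
0\longrightarrow\operatorname{Ext}(H_1(X),\mu_r)\longrightarrow H^2(X;\mu_r)\longrightarrow\Hom(H_2(X),\mu_r)\longrightarrow 0
\]
then forces $H^2(X;\mu_r)=0$, since $\operatorname{Ext}(\ZZ,\mu_r)=0$ and $H_2(X)=0$. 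Hence $o(\rho)=0$ and a lift $\tilde\rho$ exists.

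For the uniqueness statement, suppose $\tilde\rho$ and $\tilde\rho'$ are two lifts of $\rho$. As they project to the same map into $\PSL(r,\CC)$, for each $\gamma$ we have $\tilde\rho'(\gamma)=c(\gamma)\,\tilde\rho(\gamma)$ with $c(\gamma)\in\mu_r$. Because $\mu_r$ is central and both $\tilde\rho,\tilde\rho'$ are homomorphisms, the map $c\co\Gamma\to\mu_r$ is itself a homomorphism, so it factors through the abelianization $\varphi\co\Gamma\to\Gamma_{\mathrm{ab}}\cong\ZZ$. Thus $c$ is determined by its value $\varpi^k$ on the meridian, yielding $c(\gamma)=\varpi^{k\varphi(\gamma)}$ and therefore $\tilde\rho'=\varpi^k\cdot\tilde\rho$. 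Conversely, each $\varpi^k\cdot\tilde\rho$ is manifestly a lift, so the set of all lifts is exactly the orbit $\mu_r\cdot\tilde\rho$.

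I expect the only real subtlety to be the careful justification that the lifting obstruction lives in $H^2(\Gamma;\mu_r)$ with trivial coefficients and that this group vanishes for knot groups; once asphericity and the elementary homology $H_1(X)\cong\ZZ$, $H_2(X)=0$ are invoked, both the existence and the uniqueness up to $\mu_r$ are immediate.
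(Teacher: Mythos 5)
Your proof is correct and follows essentially the same route as the paper: both define the lifting obstruction as a $\mu_r$-valued $2$-cocycle attached to a set-theoretic lift, kill it using the vanishing of $H^2(\Gamma;\mu_r)$, and identify the ambiguity of the lift with $H^1(\Gamma;\mu_r)\cong\Hom(\Gamma,\mu_r)\cong\mu_r$. The only difference is that you spell out the topological justification for $H^2(\Gamma;\mu_r)=0$ (asphericity of the knot exterior, $H_1\cong\ZZ$ and $H_2=0$ via Alexander duality, and universal coefficients), which the paper simply asserts.
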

\begin{proof}
There is a short exact sequence
\[ 1\to\mu_r\to\SL(r,\CC)\to\PSL(r,\CC)\to1\,.\]
We associate to the representation $\rho\co\Gamma\to\PSL(r,\CC)$  a second 
Stiefel-Whitney class $w_2 =w_2(\rho) \in H^2(\Gamma,\mu_r)$ defined as follows: choose any set-theoretic lift $ f\co\Gamma\to\SL(r,\CC)$ and define $w_2\co\Gamma\times\Gamma\to\mu_r$ such that 
\[ \forall \gamma_1,\gamma_2\in\Gamma\qquad
 f(\gamma_1\gamma_2)= f(\gamma_1)  f(\gamma_2) w_2(\gamma_1,\gamma_2)\,.\]
It is easy to see that $w_2\in Z^2(\Gamma,\mu_r)$ is a cocyle. Moreover, the cohomology class represented by $w_2$ does not depend on the lift $f$. Now, $w_2$ represents the trivial cohomology class since $H^2(\Gamma,\mu_r)$ is trivial. Therefore, there exists a map $d\co\Gamma\to\mu_r$ such that for all $\gamma_1,\gamma_2\in\Gamma$,
  \[w_2(\gamma_1,\gamma_2)= d(\gamma_1)d(\gamma_2)d(\gamma_1\gamma_2)^{-1}\,. \]
It is clear that
$\tilde\rho\co\Gamma\to\SL(r,\CC)$ given by $\tilde\rho(\gamma)=d(\gamma)f(\gamma)$ is a representation. Finally, $d$ is unique up multiplication with a cocycle 
$h\in H^1(\Gamma,\mu_r)\cong\mathrm{Hom}(\Gamma,\mu_r)\cong\mu_r$. 
\end{proof}

\begin{lemma}\label{lem:GL}
Let $\Gamma$ be a knot group. Then $X(\Gamma,\PSL(r,\CC))\cong X(\Gamma,\SL(r,\CC))/\mu_r$ and  
\[ X(\Gamma,\GL(r,\CC))\cong  X(\Gamma,\SL(r,\CC))\times_{\mu_r} \CC^*.\]
\end{lemma}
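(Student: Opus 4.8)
The plan is to prove both isomorphisms by combining the lifting result of Lemma~\ref{lem:lift} with the general fact that the character variety is the GIT quotient of the representation variety by conjugation, so that it suffices to understand the fibres of the natural maps on representation varieties and check that taking GIT quotients commutes with the relevant finite/torus actions.

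First I would treat the $\PSL$ statement. Lemma~\ref{lem:lift} gives a surjection $R(\Gamma,\SL(r,\CC))\to R(\Gamma,\PSL(r,\CC))$ (every $\PSL$ representation lifts), whose fibres are exactly the $\mu_r$-orbits (all lifts are of the form $\mu_r\cdot\tilde\rho$). Since the center $\mu_r\subset\SL(r,\CC)$ acts trivially under conjugation, the $\mu_r$-action and the conjugation $\PGL(r,\CC)$-action on $R(\Gamma,\SL(r,\CC))$ commute, and together they generate the conjugation action of $\PGL(r,\CC)\times\mu_r$. Passing to GIT quotients in two stages and using that GIT quotients may be taken successively, I would identify
\[
X(\Gamma,\PSL(r,\CC)) \;=\; R(\Gamma,\SL(r,\CC))/\!/\bigl(\PGL(r,\CC)\times\mu_r\bigr)
\;=\; \bigl(R(\Gamma,\SL(r,\CC))/\!/\PGL(r,\CC)\bigr)/\mu_r
\;=\; X(\Gamma,\SL(r,\CC))/\mu_r\,,
\]
which is the first claim.

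For the $\GL$ statement I would exploit the surjection $\SL(r,\CC)\times\CC^*\to\GL(r,\CC)$, $(A,\lambda)\mapsto \lambda A$, whose kernel is the finite group $\mu_r$ embedded antidiagonally as $\zeta\mapsto(\zeta,\zeta^{-1})$. Because $\Gamma^{\mathrm{ab}}\cong\ZZ$ is free, any homomorphism $\Gamma\to\CC^*$ is determined freely by the image of the meridian, so a $\GL(r,\CC)$-representation is equivalent to a pair consisting of an $\SL(r,\CC)$-representation together with a $\CC^*$-valued character of $\Gamma$, i.e.\ an element of $\CC^*$, modulo the diagonal $\mu_r$-action. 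This gives $R(\Gamma,\GL(r,\CC))\cong (R(\Gamma,\SL(r,\CC))\times\CC^*)/\mu_r$ on the nose (here I would invoke the same $w_2$-vanishing argument as in Lemma~\ref{lem:lift} to guarantee that the determinant can be extracted as an honest $r$-th root after twisting by a scalar character). Taking the GIT quotient by the conjugation action, which fixes the $\CC^*$-factor, and again commuting the quotient with the finite $\mu_r$-action, yields $X(\Gamma,\GL(r,\CC))\cong X(\Gamma,\SL(r,\CC))\times_{\mu_r}\CC^*$.

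The main obstacle I expect is justifying that the set-theoretic quotient descriptions of the representation varieties are actually isomorphisms of affine GIT quotients, and in particular that forming the categorical quotient by $\PGL(r,\CC)$ commutes with the residual finite $\mu_r$-action: finite group quotients of affine varieties are well behaved, but one must check that the scheme structures match and that no characters are identified or separated incorrectly. Concretely, the delicate point is verifying that two $\SL(r,\CC)$-characters differing by an element of $\mu_r$ genuinely map to the same $\PSL(r,\CC)$-character and conversely, which ultimately rests on the fact that $\mu_r$ acts freely enough on closed orbits; I would handle the scheme-theoretic subtlety by appealing to the standard theory of quotients under the action of reductive groups with a central finite subgroup, so the topological bijection upgrades to an isomorphism of affine quotients.
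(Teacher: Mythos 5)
Your proposal is correct and takes essentially the same route as the paper: the $\PSL$ isomorphism is deduced from Lemma~\ref{lem:lift}, and the $\GL$ isomorphism from extracting an $r$-th root of the determinant character (the same $w_2$-vanishing argument, cf.\ \cite{AHJ2010}) to obtain $R(\Gamma,\GL(r,\CC))\cong\bigl(R(\Gamma,\SL(r,\CC))\times\CC^*\bigr)/\mu_r$, followed by passing to GIT quotients using that the conjugation and $\mu_r$-actions commute and that conjugation acts trivially on the central $\CC^*$-factor. The paper's proof is this same two-step argument, only stated more tersely, so your extra care about commuting the finite quotient with the categorical quotient merely makes explicit what the paper leaves implicit.
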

\begin{proof}
The isomorphism $X(\Gamma,\PSL(r,\CC))\cong X(\Gamma,\SL(r,\CC))/\mu_r$ follows from Lemma~\ref{lem:lift}. 

Now the same proof as for Lemma~\ref{lem:lift} shows that for any homomorphism 
$h\co\Gamma\to\CC^*$ there exists a homomorphism $\tilde h\co\Gamma\to\CC^*$ such that
$\tilde h^r = h$ (see \cite[Lemma~2.1]{AHJ2010}). Therefore the map
$R(\Gamma,\SL(r,\CC))\times\CC^*\to R(\Gamma,\GL(r,\CC))$
given by $(\rho,\lambda) \mapsto \lambda^\varphi \rho$ is surjective, and
$(\rho,\lambda)$ and $(\rho',\lambda')$ map to the same representation if and only if
$(\rho',\lambda')\in\mu_r(\rho,\lambda)$. Hence,
\[(R(\Gamma,\SL(r,\CC))\times\CC^*)/\mu_r\cong R(\Gamma,\GL(r,\CC))\,.\]
The actions of $\mu_r$ and 
$\GL(r,\CC)$ on $R(\Gamma,\SL(r,\CC))\times\CC^*$ commute. Moreover,
$\GL(r,\CC)$ acts trivially on the representations into the center $\CC^*$. Hence
\begin{align*}
R(\Gamma,\GL(r,\CC))\sslash\GL(r,\CC) &\cong 
\big(R(\Gamma,\SL(r,\CC))\times\CC^*)\sslash \GL(r,\CC)\big)/\mu_r\\
&\cong
(X(\Gamma,\SL(r,\CC)) \times \CC^*)/ \mu_r = X(\Gamma,\SL(r,\CC))\times_{\mu_r} \CC^*\,.\qedhere
\end{align*}
\end{proof}

\subsection*{Distinguished component}
For a hyperbolic knot group, up to complex conjugation
there exists a unique one-dimensional component
$X_0\subset X(\Gamma,\PSL(2,\CC))$ which contains the character of the holonomy representation.
The holonomy representations lifts to a representation $\rho\co\Gamma\to\SL(2,\CC)$, and by composing any  
lift with the rational, irreducible, $r$-dimensional representation
$\mathrm{Sym}^{r-1}\co\SL(2,\CC)\to\SL(r,\CC)$ we obtain an irreducible representation
$\rho_r\co\Gamma\to\SL(r,\CC)$. It follows from \cite{MenalP} that 
$\chi_{\rho_r}\in X(\Gamma,\SL(r,\CC))$ is a smooth point contained in a unique $(r-1)$-dimensional 
component of $X(\Gamma,\SL(r,\CC))$. We will call such a component a \emph{distinguished} component of $X(\Gamma,\SL(r,\CC))$.
For \emph{odd} $r$, as $\mathrm{Sym}^{r-1}\co\SL(2,\CC)\to\SL(r,\CC)$ factors through $\PSL(2,\CC)$, there is a unique distinguished component 
in $X(\Gamma,\SL(r,\CC))$ up to complex conjugation.

\subsection*{Totally reducible representations}
%%%%%%%%%%%%%%%%%%%%%%%%%%%%%%%%%%%%%%%%

Totally reducible representations are representations which
split as a direct sum of one-dimensional representations. In particular they are representations 
of the abelianization of a knot group $\Gamma$, which is $\ZZ$.
Thus the restriction of a totally reducible representation
to the commutator subgroup is trivial and it only depends of the image of a meridian, that is a diagonal matrix.

If the image of a meridian is  $\diag (\lambda_1,\ldots,\lambda_r)$, then the 
space of parameters is $(\sigma_1,\ldots,\sigma_r)$,
where $\sigma_i$ is the $i$-th elementary symmetric polynomial on 
$\lambda_1,\ldots,\lambda_r$ 
(hence $\sigma_r=1$ for $\SL(r,\CC)$).
Thus for any 
knot group $\Gamma$
 \begin{align}
 X_\mathrm{TR}(\Gamma,\SL(r,\CC)) &= \CC^{r-1} \, , \notag\\
 X_\mathrm{TR}(\Gamma,\GL(r,\CC)) &= \CC^{r-1}\x \CC^* \, , \label{eq:tot-red}\\
 X_\mathrm{TR}(\Gamma,\PGL(r,\CC)) &= (\CC^{r-1})/\mu_r \,  ,\notag
 \end{align}
where $\varpi\cdot(\sigma_1,\ldots,\sigma_{r-1}) = 
(\varpi \sigma_1, \varpi^2\sigma_2 ,\ldots,\varpi^{r-1}\sigma_{r-1})$.

Now (and in the rest o the paper) we move to the specific case where $\Gamma$ is the figure eight knot group.
As the group $\Gamma$ is generated by $S$ and $T$ (Presentation \eqref{eq:pres-2bridge}), 
$X(\Gamma,\SL(3,\CC))$
can be viewed as a subvariety of the variety of characters of a free group generated by $S$ and $T$. In addition, since $S$ and $T$ 
are conjugate in $\Gamma$,
the results of Lawton's and Will on the variety of characters of free groups yield \cite{Lawton0,Will}:

\begin{proposition} \label{prop:parametersLawton}
For $ \Gamma$ the  figure eight knot group,  the character variety  $X(\Gamma,\SL(3,\CC))$ embeds into $\CC^8$.
The embedding is given by the  parameters  $(y, \bar y, z, \bar z, \alpha, \bar\alpha, \beta, \bar\beta )$ 
in \eqref{eq:parameters}.
% % \begin{alignat}{4}
% %   y(\chi) &=\chi(S), \ & \bar{y}(\chi)&=\chi(S^{-1}),\ & z(\chi)&=\chi(ST),\ & \bar{z}(\chi)&=\chi(T^{-1}S^{-1}),\notag\\
% % \beta(\chi)&=\chi(TS^{-1}), \ & \bar\beta(\chi) &=\chi(ST^{-1}), \ &
% % \alpha(\chi)&=\chi([S,T]), \ & \bar\alpha(\chi) &=\chi([T,S])\,.
% % \end{alignat}
\end{proposition}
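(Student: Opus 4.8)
The plan is to recall the structure of Lawton's and Will's coordinates for the $\SL(3,\CC)$-character variety of the free group $F_2 = \langle S, T\rangle$, and then to show that these coordinates restrict to an embedding on the subvariety $X(\Gamma,\SL(3,\CC))$. Lawton's theorem provides nine trace functions that generate the coordinate ring of $X(F_2,\SL(3,\CC))$, namely the traces $\tr(S)$, $\tr(S^{-1})$, $\tr(T)$, $\tr(T^{-1})$, $\tr(ST)$, $\tr(S^{-1}T^{-1})$, $\tr(ST^{-1})$, $\tr(S^{-1}T)$, together with $\tr([S,T])$, subject to a single relation expressing $\tr([S,T])$ and $\tr([T,S])$ as the two roots of a quadratic whose coefficients are polynomials in the other eight traces. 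So $X(F_2,\SL(3,\CC))$ embeds into $\CC^9$, and projecting away the last coordinate gives a degree-two branched cover onto $\CC^8$.

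First I would use the fact, stated in the excerpt, that $S$ and $T$ are conjugate in $\Gamma$ (indeed both are meridians). This forces $\tr(T) = \tr(S) = y$ and $\tr(T^{-1}) = \tr(S^{-1}) = \bar y$ on all of $X(\Gamma,\SL(3,\CC))$, so the two Lawton coordinates $\tr(T),\tr(T^{-1})$ become redundant and are already recorded by $y,\bar y$. This is the key reduction that collapses Lawton's nine generators down to the eight parameters in \eqref{eq:parameters}: we keep $y,\bar y, z = \tr(ST), \bar z = \tr(S^{-1}T^{-1}), \beta = \tr(S^{-1}T), \bar\beta = \tr(ST^{-1})$, and we use $\alpha = \tr([T,S])$, $\bar\alpha = \tr([S,T])$ in place of Lawton's single commutator coordinate.

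The next step is to verify that the pair $(\alpha,\bar\alpha)$ genuinely separates the two sheets of the branched cover, so that no information is lost when we replace the single coordinate $\tr([S,T])$ by the two values $\alpha,\bar\alpha$. Here the point is that $\alpha$ and $\bar\alpha$ are the two roots of Lawton's defining quadratic (with the other eight traces as coefficients); recording both of them, rather than just one, exactly resolves the two-to-one ambiguity of the projection $\CC^9 \to \CC^8$. Thus the eight functions in \eqref{eq:parameters} determine a point of $X(F_2,\SL(3,\CC))$ uniquely, hence separate characters of $F_2$, and a fortiori characters of $\Gamma$ once restricted. Combined with the conjugacy relations $\tr(S)=\tr(T)$, this shows the map $X(\Gamma,\SL(3,\CC)) \to \CC^8$ is injective.

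The main obstacle is ensuring injectivity is an \emph{embedding} of algebraic sets and not merely a set-theoretic bijection onto its image: one must check that the eight trace functions generate the coordinate ring, i.e.\ that every regular function on $X(\Gamma,\SL(3,\CC))$ is a polynomial in them. This follows because Lawton's nine traces generate the coordinate ring of $X(F_2,\SL(3,\CC))$, the restriction map to $X(\Gamma,\SL(3,\CC))$ is surjective on coordinate rings, and on the image the relations $\tr(T)=\tr(S)=y$, $\tr(T^{-1})=\tr(S^{-1})=\bar y$ express the two superfluous generators polynomially in our eight, while $\alpha,\bar\alpha$ together generate the same subring as $\tr([S,T])$ via their symmetric functions (the sum $\alpha+\bar\alpha$ and product $\alpha\bar\alpha$ are polynomials in the remaining six traces by Lawton's relation). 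Therefore the eight parameters generate the coordinate ring and the map is a closed embedding into $\CC^8$.
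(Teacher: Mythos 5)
Your proposal is correct and follows essentially the same route as the paper, which derives Proposition~\ref{prop:parametersLawton} directly from Lawton's and Will's description of $X(F_2,\SL(3,\CC))$ by nine trace coordinates with the quadratic relation on $\tr([S,T])$, using that $S$ and $T$ are conjugate meridians to identify $\tr(T)=\tr(S)$, $\tr(T^{-1})=\tr(S^{-1})$ and that recording both roots $\alpha,\bar\alpha$ of the quadratic resolves the two-fold ambiguity. Your additional verification that the eight parameters generate the coordinate ring (via surjectivity of restriction on invariants for the reductive group $\SL(3,\CC)$) correctly fills in the detail the paper leaves implicit.
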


Of course the previous proposition applies to any two-bridge knot group, as it has a presentation
similar to \eqref{eq:pres-2bridge} with two generators represented by meridian curves.

%%%%%%%%%%%%%%%%%%%%%%%%%%%%%%%%%%%%%%%%
\section{Representations in $\SL(2,\CC)$, $\GL(2,\CC)$, and $\PGL(2,\CC)$}
%%%%%%%%%%%%%%%%%%%%%%%%%%%%%%%%%%%%%%%%
\label{section:SL2}

Let  $\Gamma$ be the knot group of the figure eight, in this section we 
analize the space of representations 
$X(\Gamma, G)$ for  $G=\SL(2,\CC)$, $\GL(2,\CC)$ and $\PGL(2,\CC)$.
Reducible representations are totally reducible, hence they have been described
in Section~\ref{sec:general}, and
we discuss next \emph{irreducible} representations.

To understand the irreducible representations of the figure eight knot group into
$\SL(2,\CC)$, we follow \cite{Porti}.
For $G= \SL(2,\CC)$ or $\GL(2,\CC)$, %or $\PGL(2,\CC)$, 
let $\chi\in X(\Gamma,G)$ be a character. 
%, $T=\rho(t)\in G$.
%We will usually say that $(A,B,T)$ is a representation of $X(\Gamma,G)$ when referring to $\rho$. 
Consider the restriction map
 $$
 \res\co X(\Gamma,G)\to X(F_2,  \SL(2,\CC)).
 $$
We use Fricke coordinates for  $X(F_2,  \SL(2,\CC))$, given as
$$
x_1(\chi)=\chi (a), \qquad 
x_2(\chi)=\chi (b), \qquad 
x_3(\chi)=\chi (ab), \qquad 
$$
which define an isomorphism $X(F_2,\SL(2,\CC)) \cong \CC^3$.
In those coordinates, conjugation by $t$ induces a transformation given by:
 $$
 (x_1,x_2,x_3)(\chi)\mapsto (\chi(ab),\chi(bab),\chi(ab^2ab))= 
 (x_3,x_2x_3-x_1,x_2x_3^2 -x_1 x_3-x_2).
 $$
Here we have used the basic identities for $Y,Z\in\SL(2,\CC)$,
\begin{equation}
 \label{eqn:basicSL2}
 \tr (YZ)=\tr (Y) \tr (Z)-\tr (YZ^{-1}), \quad \tr (YZ)= \tr (ZY), \quad \tr (Y^{-1})=\tr( Y)\, .
\end{equation}
%Thus
%\begin{align*}
% \res(X(\Gamma, G)) & =\{(x_1,x_2,x_3)\in \CC^3\mid x_1=x_3,\ x_2= x_2x_3-x_1,\ x_3=x_2x_3^2 -x_1 x_3-x_2\} \\
%  &\cong \{(x_1,x_2)\in \CC^2\mid x_1 x_2=x_1+x_2\}   .
%\end{align*}
%
%Namely we get a curve, the point with coordinates $(x_1,x_2)=(2,2)$ stands for the projection of reducible 
%representations (trivial in $F_ 2$),
%and the rest of the curve corresponds to irreducible representations.
%Note that the curve is rational, parametrized by $x_2=x_1/(x_1-1)$, $x_1\in \CC -\{1\}$.
%
%
%To get   $X(\Gamma,  \SL(2,\CC))$ we introduce  a third variable:
% $$
% y(\chi)=\chi(t)=\chi( ta)=\chi(tb)=\chi(tab),
% $$
%(it is straightforward that $t$, $ta$, $tb$ and $tab$ are conjugate elements).
%By applying the identities \eqref{eqn:basicSL2} to the trace of $b= [a,t^{-1}]$, 
%we get $x_2=x_1^2-y^2(x_1-2)-2$. Hence there are two components:
% $$
% x_1=2,  \  x_2=2, \ y\in \CC,
% $$
%corresponding to reducible representations, and
% \begin{equation} \label{eqn:y}
% x_1+x_2=x_1x_2, \  y^2=x_1+x_2+1,
%  \end{equation}
%corresponding to irreducible ones (except at the points  
%$x_1=x_2=2$, $y=\pm\sqrt{5}$, where the two curves intersect,
%that are reducible representations). 
Thus
\begin{align*}
 \res(X(\Gamma, G)) & =\{(x_1,x_2,x_3)\in \CC^3\mid x_1=x_3,\ x_2= x_2x_3-x_1,\ x_3=x_2x_3^2 -x_1 x_3-x_2\} \\
  &\cong \{(x_1,x_2)\in \CC^2\mid x_1 x_2=x_1+x_2\} \\
  &\cong  \{(x_1,x_2)\in \CC^2\mid 
  (x_1-1)(x_2-1)=1\}\cong\CC-\{1\} .
\end{align*}
%\comment{M: it seem's to me that is simpler  to state directly the isomorphism to 
%$\CC\setminus\{1\}$. Moreover, we obtain that $1/(x_1-1)$ is a regular function which simplifies the argument.}\\
The point with coordinates $(x_1,x_2)=(2,2)$ stands for the restriction of reducible 
representations to $F_ 2$,
and the rest of the curve corresponds to irreducible representations.
%Note that the curve is rational, parametrized by $x_2=x_1/(x_1-1)$, $x_1\in \CC -\{1\}$.

To get   $X(\Gamma,  \SL(2,\CC))$ we introduce  a third variable:
 $$
 y_0(\chi)=\chi(t)=\chi( ta)=\chi(tb)=\chi(tab),
 $$
(it is straightforward that $t$, $ta$, $tb$ and $tab$ are conjugate elements). The group
$\Gamma$ is generated by $t=S$ and $T=a^{-1}t a$. Notice that
$b=[a,t^{-1}]$ is conjugate to $TS^{-1}$, and therefore
coordinates for $X(\Gamma,\SL(2,\CC))$ are given by $x_1$, $x_2$ and $y_0$.
By applying the identities \eqref{eqn:basicSL2} to the trace of $b= [a,t^{-1}]$, 
we get $x_2=x_1^2-y_0^2(x_1-2)-2$ and $x_2 = x_1/(x_1-1)$. Hence 
\begin{align*}
0&= x_{1}^{3} -  x_{1}^{2} (y_0^{2} -  1) + 3 x_{1} (y_0^{2} - 3)  - 2 (y_0^{2} -2 )\\
&= (x_1-2) (x_1-1) ( x_{1}+1 - y_0^{2} + \frac{x_{1}}{x_1-1}) \,.
\end{align*}
Hence
there are two components:
 $$
 x_1=2,  \  x_2=\frac{x_{1}}{x_1-1}=2, \ y_0\in \CC,
 $$
corresponding to reducible representations, and
 $(x_1-1)(x_2-1)=1$, $y_0^2=x_1+x_2+1$,
corresponding to irreducible ones.
% (except at the points  
%$x_1=x_2=2$, $y_0=\pm\sqrt{5}$, where the two curves intersect,
%that are reducible representations). 

% \begin{remark}
% The curve (\ref{eqn:y}) is isomorphic to the plane curve
%  $$
%  y_0^2= \frac{x_1^2+x_1 -1}{x_1-1}   ,
%  $$
% which is a double cover of $\CC-\{1\}$ ramified at 
% $\frac{-1\pm  \sqrt5}2$, $1$, and $\infty$. Hence it is an elliptic curve $E$ 
% with two points removed and  with $j$-invariant 
%  $$
%   j(E)=256 \frac{(1-r+r^2)^3}{r^2(1-r)^2}=1728 \, \frac{32}{5}\, ,
%   $$
% where $r=\frac12 (7 +3\sqrt5)$ is the double ratio of the ramification points.
% This $E$ intersects the other component $\CC$ in two points.
% \end{remark}

\begin{proposition} \label{prop:SL2}
 The character variety $X(\Gamma,  \SL(2,\CC))$ has two irreducible components,
written as follows, in terms of the coordinates $x_1=\chi(a)$, $x_2=\chi(b)$ and $y_0 =\chi(t)$:
  \begin{itemize}
 \item The component corresponding to reducible representations: 
 $x_1=2,  x_2=2, y_0\in \CC$.
 \item The component corresponding to irreducible representations: 
 \begin{equation} \label{eqn:y}
  (x_1-1)(x_2-1)=1,\  y_0^2=x_1+x_2+1\,.
 \end{equation}
 \end{itemize}
They intersect transversally in the two points: $x_1=x_2=2$, $y=\pm\sqrt{5}$.
\end{proposition}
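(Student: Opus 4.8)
The plan is to take $(x_1,x_2,y_0)=(\chi(a),\chi(b),\chi(t))$ as coordinates, to cut the variety out by two explicit trace equations, factor the result into the two advertised pieces, and then verify irreducibility and transversality. First I would note that the relation $tat^{-1}=ab$ in Presentation~\eqref{eq:pres-fiber} writes $b=a^{-1}tat^{-1}$ as a commutator word in $a$ and $t$, so $\Gamma=\langle a,t\rangle$ and an $\SL(2,\CC)$ character is determined by $\chi(a)$, $\chi(t)$, $\chi(at)$. Using the identities \eqref{eqn:basicSL2} one checks that $x_2=\chi(b)$ can be used in place of $\chi(at)$, which confirms that $(x_1,x_2,y_0)$ embeds $X(\Gamma,\SL(2,\CC))$ into $\CC^3$.

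Next I would assemble the two necessary equations. Invariance of the restriction to $F_2$ under the monodromy forces the image to lie on $(x_1-1)(x_2-1)=1$, i.e.\ $x_2=x_1/(x_1-1)$; and expanding $\chi(b)$ for $b=a^{-1}tat^{-1}$ by \eqref{eqn:basicSL2} gives $x_2=x_1^2-y_0^2(x_1-2)-2$. Eliminating $x_2$ between these produces $(x_1-2)\bigl(x_1^2+x_1-1-y_0^2(x_1-1)\bigr)=0$, which is the displayed factorization. The factor $x_1=2$ yields $x_2=2$ with $y_0$ arbitrary, the reducible line; the second factor, rewritten using $x_2=x_1/(x_1-1)$, becomes exactly $y_0^2=x_1+x_2+1$, which is the irreducible component \eqref{eqn:y}. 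To obtain equality rather than mere containment I would then exhibit representations realizing every point: diagonal representations of $\Gamma_\mathrm{ab}\cong\ZZ$ sweep out the whole reducible line, and the classical $1$-parameter family of irreducible representations of the figure eight knot group realizes the curve.

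For irreducibility, the reducible component is a line $\cong\CC$. The irreducible component projects onto the conic $C=\{(x_1-1)(x_2-1)=1\}\cong\CC^*$ and is the double cover $y_0^2=x_1+x_2+1$; setting $u=x_1-1\in\CC^*$ turns $x_1+x_2+1$ into $u+u^{-1}+3$, a rational function with simple zeros at the roots of $u^2+3u+1$, hence not a square in $\CC(C)$, so the cover is connected and the component is irreducible. The two components meet only where $x_1=x_2=2$, which forces $y_0^2=5$, i.e.\ at the points $(2,2,\pm\sqrt5)$. At each of these the reducible line has tangent direction $(0,0,1)$, while parametrising the curve by $x_1$ gives $\frac{dx_2}{dx_1}=-1$ and, from $2y_0\,dy_0=(1+\frac{dx_2}{dx_1})\,dx_1$ with $y_0\neq0$, also $\frac{dy_0}{dx_1}=0$, so the curve has tangent direction $(1,-1,0)$. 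These two directions are independent, so the intersection is transverse.

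The step I expect to be the main obstacle is the second one: correctly passing from the single defining relator of $\Gamma$ to the trace equation $x_2=x_1^2-y_0^2(x_1-2)-2$, and confirming that the two equations together cut out $X(\Gamma,\SL(2,\CC))$ exactly, with no spurious or missing components. Once the factorization is secured, the irreducibility reduces to the connectedness of a double cover and the transversality to the comparison of two tangent lines, both of which are short.
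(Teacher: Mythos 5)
Your route is essentially the paper's: the same two trace equations (the monodromy fixed-point condition $(x_1-1)(x_2-1)=1$ on the restriction to $F_2$, and the expansion of $\chi(b)$ as a commutator trace), eliminated to the same factorization $(x_1-2)\bigl(x_1^{2}+x_1-1-y_0^{2}(x_1-1)\bigr)=0$, which indeed agrees with the paper's (whose intermediate cubic display contains a harmless typo, but whose factorized form is the same). Your supplementary checks are sound and go beyond the paper, which states irreducibility and transversality without proof: the observation that $u+u^{-1}+3=(u^2+3u+1)/u$ has only simple zeros and poles, hence is not a square in $\CC(u)$, correctly shows the double cover is connected; and the tangent directions $(0,0,1)$ and $(1,-1,0)$ at $(2,2,\pm\sqrt5)$ are computed correctly.

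There is, however, one concrete gap, sitting exactly where you predicted trouble. Both your claim that ``$x_2$ can be used in place of $\chi(at)$'' and your equation $x_2=x_1^{2}-y_0^{2}(x_1-2)-2$ are not consequences of the identities \eqref{eqn:basicSL2} alone. Writing $w=\chi(at)$, the commutator trace identity gives $x_2=\chi\bigl([a^{-1},t]\bigr)=x_1^{2}+y_0^{2}+(x_1y_0-w)^{2}-x_1y_0(x_1y_0-w)-2$, which is \emph{quadratic} in $w$; so at the level of the free group $\langle a,t\rangle$ the substitution $(x_1,y_0,w)\mapsto(x_1,x_2,y_0)$ is generically two-to-one, and your displayed trace equation does not follow. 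The missing ingredient is the conjugacy, which the paper records explicitly ($t$, $ta$, $tb$, $tab$ are conjugate in $\Gamma$): for instance $ta=S\,T^{-1}STS^{-1}\sim T^{-1}ST\sim S=t$, whence $w=\chi(at)=\chi(ta)=y_0$ on all of $X(\Gamma,\SL(2,\CC))$. Substituting $w=y_0$ into the quadratic yields precisely $x_2=x_1^{2}-y_0^{2}(x_1-2)-2$, and it also settles your coordinate claim at a stroke, since the Fricke triple of $\langle a,t\rangle$ becomes $(x_1,y_0,y_0)$, so $(x_1,x_2,y_0)$ trivially determines the character. A related soft spot is your appeal to ``the classical $1$-parameter family'' for surjectivity: cleaner is to note that any monodromy-fixed irreducible $F_2$-character extends to $\Gamma$ (the representation is conjugate to its monodromy twist, and the conjugating matrix, normalized to $\SL(2,\CC)$, serves as $\rho(t)$, its two signs realizing both roots of $y_0^{2}=x_1+x_2+1$). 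With these insertions your argument closes and coincides with the paper's proof.
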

%\comment{M: if you do not like the presentation with $(x_1-1)(x_2-1)=1$ we can change it.}

From this, to get $\PGL(2,\CC)$-characters, we have to quotient by
$\mu_2=\{\pm 1\}$ acting on $y_0$, that is 
by involution $y_0\mapsto -y_0$ (see Section~\ref{sec:general} and \cite{MGA}). 
%This produces two components $\CC/\mu_2 \cong \CC$, consisting
%of reducible representations,  and $E/\mu_2$,  which is parametrized by $x_1 \in \CC-\{1\}$. 
%These two components intersect in a single point.

\begin{proposition} \label{prop:PGL2}
 The character variety $X(\Gamma,  \PGL(2,\CC))$ has two irreducible components,
written as follows, in terms of the coordinates $x_1=\chi(a)$, $x_2=\chi(b)$ and $z_0=y_0^2 =\chi(t)^2$:
  \begin{itemize}
 \item The component corresponding to reducible representations: 
 $x_1=2,  x_2=2, z_0\in \CC$.
 \item The component corresponding to irreducible representations: 
\[(x_1-1)(x_2-1)=1,\  z_0=x_1+x_2+1\,,\]
which is isomorphic to $\CC-\{1\}$.
 \end{itemize}
They intersect in one point: $x_1=x_2=2, z_0=5$.
\end{proposition}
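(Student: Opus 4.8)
The plan is to obtain $X(\Gamma,\PGL(2,\CC))$ directly from the already-established description of $X(\Gamma,\SL(2,\CC))$ in Proposition~\ref{prop:SL2} by passing to the quotient under the action of the center $\mu_2=\{\pm 1\}$. By Lemma~\ref{lem:GL} applied with $r=2$, we have $X(\Gamma,\PGL(2,\CC))\cong X(\Gamma,\SL(2,\CC))/\mu_2$, so the whole task reduces to understanding how $\mu_2$ acts on the coordinates $x_1,x_2,y_0$ and then taking the invariant ring. The key observation is that the nontrivial element $-1\in\mu_2$ acts by $\rho(\gamma)\mapsto(-1)^{\varphi(\gamma)}\rho(\gamma)$, and since $\varphi(a)=\varphi(b)=0$ while $\varphi(t)=1$, the coordinates $x_1=\chi(a)$ and $x_2=\chi(b)$ are fixed while $y_0=\chi(t)$ is sent to $-y_0$. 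This is exactly the involution $y_0\mapsto -y_0$ referenced in the text.

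From here I would treat the two components of Proposition~\ref{prop:SL2} separately. On the reducible component $x_1=x_2=2$, $y_0\in\CC$, the quotient by $y_0\mapsto-y_0$ is just $\CC/\{\pm1\}\cong\CC$, with coordinate the invariant $z_0=y_0^2$; this gives the stated reducible component $x_1=2$, $x_2=2$, $z_0\in\CC$. On the irreducible component defined by $(x_1-1)(x_2-1)=1$ and $y_0^2=x_1+x_2+1$, the relation already expresses $y_0^2$ in terms of $x_1,x_2$, so after setting $z_0=y_0^2$ the defining equations become $(x_1-1)(x_2-1)=1$ and $z_0=x_1+x_2+1$, which are manifestly invariant. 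The second equation determines $z_0$ from $(x_1,x_2)$, so this component is cut out in the $(x_1,x_2)$-plane by $(x_1-1)(x_2-1)=1$, and the substitution $u=x_1-1$ identifies it with $\{u\in\CC^*\}\cong\CC-\{0\}$; matching the earlier identification of $\res(X(\Gamma,G))$ with $\CC-\{1\}$ via $x_1$, one records it as $\CC-\{1\}$, as claimed.

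Finally I would compute the intersection of the two components. In $X(\Gamma,\SL(2,\CC))$ they meet in the two points $x_1=x_2=2$, $y_0=\pm\sqrt5$, and these two points form a single $\mu_2$-orbit since the involution swaps $y_0=\sqrt5$ with $y_0=-\sqrt5$. Hence after quotienting they collapse to the single point $x_1=x_2=2$, $z_0=5$, which is the stated intersection. The one subtlety worth verifying is that $\mu_2$ acts freely enough on the relevant locus that the GIT quotient is well-behaved and the invariant coordinates $(x_1,x_2,z_0)$ genuinely separate orbits; but since the action only folds the single coordinate $y_0$ by sign and $z_0=y_0^2$ is a complete invariant for that action, no further quotient singularity analysis is needed, and the main obstacle is merely the bookkeeping of checking that the invariant functions $x_1,x_2,z_0$ generate the invariant ring and cut out the described sets scheme-theoretically.
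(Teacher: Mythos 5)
Your proposal is correct and follows essentially the same route as the paper, which obtains $X(\Gamma,\PGL(2,\CC))$ as the quotient $X(\Gamma,\SL(2,\CC))/\mu_2$ with $\mu_2$ acting by $y_0\mapsto -y_0$ (via Lemma~\ref{lem:GL}, since $\PGL(2,\CC)=\PSL(2,\CC)$), passing to the invariant $z_0=y_0^2$ on each component of Proposition~\ref{prop:SL2} and collapsing the two intersection points $(2,2,\pm\sqrt{5})$ to the single point $z_0=5$. Your added check that $x_1,x_2,z_0$ generate the invariant ring is a sound (if routine) justification that the paper leaves implicit.
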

%\colorbox{yellow}{M: I do not know if it makes sense to put the $\PGL(2)$-proposition here.
%Do we need it ?}

To get the $\GL(2,\CC)$-representations,  recall from Lemma~\ref{lem:GL} that
 $$
 X(\Gamma,\GL(2,\CC)) = (X(\Gamma,\SL(2,\CC)) \x \CC^*)/\mu_2 \, .
  $$
This algebraic set has two components: the one consisting of characters of reducible representations is isomorphic
to $(\CC\x \CC^*)/\mu_2$, and the one containing characters of irreducible representations that is
isomorphic to $(E\x \CC^*)/\mu_2$, 
where $E$ is the curve defined by (\ref{eqn:y}). 
The action of $\mu_2$ on $\CC\x \CC^*$ generates the equivalence $(y_0,\lambda)\sim (-y_0,-\lambda)$. 
The ring of invariant functions of this action is generated by $u=y_0^2$, $v=y_0\lambda$, $w=\lambda^2$, and
the algebraic relations between these functions are generated by $uw=v^2$. 
The variable $u$ can be eliminated since $1/w$ is a regular function on $\CC^*$. 
We obtain that $(\CC\x \CC^*)/\mu_2\cong \CC\x \CC^*$.
It coincides with the component $X_\mathrm{TR}(\Gamma,\GL(2,\CC))$ 
(see Equation~\eqref{eq:tot-red}).

The product $E\x \CC^*$  is parametrized by $(x_1,y_0,\lambda)$, satisfying
the equations 
(\ref{eqn:y}). In order to obtain $(E\x \CC^*)/\mu_2$ we have to identify 
$(x_1,y_0,\lambda) \sim (x_1,-y_0,-\lambda)$. Now, the ring of invariant functions of this action 
is generated by $u=y_0^2$, $v=y_0\lambda$, $w=\lambda^2$ and $x_1$, and
the algebraic relations between these functions are generated by $uw=v^2$. Hence
$(E\x \CC^*)/\mu_2$ is isomorphic to
\[ \{ (v,w,x_1)\in \CC\x\CC^*\x(\CC-\{1\})\mid \frac{x_1^2 +x_1-1}{x_1-1} w = v^2\}\,.\]
The intersection of the two components is given by introducing the additional equation $x_1=2$,
and therefore:
$$(E\x \CC^*)/\mu_2\cap (\CC\x \CC^*)/\mu_2=\{ (v,v^2/5,2)\mid v\in\CC^*\}\cong\CC^*.$$

Notice that for a representation $\rho_2\co\Gamma\to\GL(2,\CC)$ with character 
$\chi_2 := \chi_{\rho_2}\in X(\Gamma,\GL(\CC))$ we have:
\[ v(\chi_2)=\tr(\rho_2(t))\in\CC ,\quad w(\chi_2) = \det(\rho_2(t))\in\CC^*,\quad 
x_1(\chi_2)=\tr(\rho_2(a))\in\CC-\{1\}\,.\]

\begin{proposition} \label{prop:GL2}
 The character variety $X(\Gamma,  \GL(2,\CC))$ has two irreducible 2-dimensional components.
%written as follows, in terms of the coordinates 
%$(v,w,x_1)\in  \CC\x\CC^*\x(\CC-\{1\})$:
%$x_2(\chi_\rho)=\tr (B)$, 
%$v =\tr(\rho(t))$, and $w=\det (\rho(t))$:
More precisely,
  \begin{itemize}
 \item The component $X_\mathrm{TR}(\Gamma,\GL(2,\CC))$ contains only characters of reducible representations is isomorphic to $\CC\x\CC^*$:  
% $x_2=2$, 
\[ X_\mathrm{TR}(\Gamma,\GL(2,\CC)) \cong 
\{ (v,w,x_1)\in  \CC\x\CC^*\x(\CC-\{1\})\mid x_1= 2 \} \cong \CC\x\CC^*\,.\]
 \item The component $X_2$ which contains characters of irreducible representations: 
\[  X_2 \cong 
\{ (v,w,x_1)\in  \CC\x\CC^*\x(\CC-\{1\})\mid \frac{x_1^2 +x_1-1}{x_1-1} w = v^2\}\,.\]
 \end{itemize}
The intersection $X_0\cap X_\mathrm{TR}(\Gamma,\GL(2,\CC))$ is isomorphic to $\CC^*$: 
\[ \{ (v,w,2)\in  \CC\x\CC^*\x(\CC-\{1\})\mid 5 w = v^2\}\cong\CC^*\,.\]
\end{proposition}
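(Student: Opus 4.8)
The plan is to assemble the statement from the material already in hand, principally Lemma~\ref{lem:GL} and Proposition~\ref{prop:SL2}. Lemma~\ref{lem:GL} gives $X(\Gamma,\GL(2,\CC))\cong(X(\Gamma,\SL(2,\CC))\times\CC^*)/\mu_2$, so the first step is to record how the generator of $\mu_2$ acts on the product: on an $\SL(2,\CC)$-character it multiplies $y_0=\chi(t)$ by $-1$ and fixes $x_1=\chi(a)$, $x_2=\chi(b)$ (since $\varphi(t)=1$ while $\varphi(a)=\varphi(b)=0$), and on the $\CC^*$-factor $\lambda$ it acts by $\lambda\mapsto-\lambda$. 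Because this action fixes the $x_i$, it preserves the decomposition of $X(\Gamma,\SL(2,\CC))$ from Proposition~\ref{prop:SL2} into its reducible and irreducible parts; hence the quotient splits into exactly two pieces, which I would treat in turn.

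Second, I would compute each quotient through its ring of invariants. In both cases the relevant action is $(y_0,\lambda)\mapsto(-y_0,-\lambda)$, whose invariant ring is generated by $u=y_0^2$, $v=y_0\lambda$, $w=\lambda^2$ subject to the single relation $uw=v^2$. On the reducible component $\CC\times\CC^*$, the crucial point is that $w=\lambda^2$ is invertible, so $u=v^2/w$ is a regular function of $v$ and $w$; thus $u$ is redundant and the quotient is $\CC\times\CC^*$ with coordinates $(v,w)$, matching $X_\mathrm{TR}(\Gamma,\GL(2,\CC))$ from \eqref{eq:tot-red}. On the irreducible component I keep the extra invariant coordinate $x_1\in\CC-\{1\}$ and use \eqref{eqn:y}: from $(x_1-1)(x_2-1)=1$ I obtain $x_2=x_1/(x_1-1)$, whence $u=y_0^2=x_1+x_2+1=\frac{x_1^2+x_1-1}{x_1-1}$; substituting into $uw=v^2$ produces exactly the equation cutting out $X_2$.

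Third, for the intersection I would impose the condition $x_1=2$ (equivalently $x_2=2$) that characterizes restrictions of reducible representations to $F_2$; this turns $u$ into $5$ and hence $uw=v^2$ into $5w=v^2$, giving $\{(v,v^2/5,2)\mid v\in\CC^*\}\cong\CC^*$. Finally I would confirm irreducibility and dimension: the reducible component is visibly $\CC\times\CC^*$, and $X_2$ is the hypersurface $(x_1^2+x_1-1)\,w=v^2(x_1-1)$ inside $\CC\times\CC^*\times(\CC-\{1\})$, which is irreducible (its defining polynomial is linear in $w$ with coprime coefficients $x_1^2+x_1-1$ and $v^2(x_1-1)$) and hence two-dimensional.

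The main difficulty is bookkeeping rather than conceptual: one must check that eliminating $u$ is legitimate, which relies precisely on $w\in\CC^*$; that the $\mu_2$-quotient genuinely realizes the GIT quotient, so the invariant ring with relation $uw=v^2$ is the full coordinate ring; and that no spurious identification merges the two components. Each of these was effectively settled in the computation preceding the statement, so the proof is essentially a matter of collecting and restating those results.
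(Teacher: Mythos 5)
Your proposal is correct and follows essentially the same route as the paper: Lemma~\ref{lem:GL} combined with the component decomposition of Proposition~\ref{prop:SL2}, the invariant ring generated by $u=y_0^2$, $v=y_0\lambda$, $w=\lambda^2$ with the single relation $uw=v^2$, elimination of $u$ via the invertibility of $w$ on $\CC^*$, and substitution of $u=x_1+x_2+1=\frac{x_1^2+x_1-1}{x_1-1}$ from \eqref{eqn:y} to cut out $X_2$, with the intersection obtained by setting $x_1=2$ so that $5w=v^2$. The only addition beyond the paper's text is your explicit irreducibility check for $X_2$, which is a harmless (and correct) supplement rather than a deviation.
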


%%%%%%%%%%%%%%%%%%%%%%%%%%%%%%%%%%%%%%%%%%%%%%%%%%%%%%%%%%%%%%%%%%%%%
\section{Reducible representations  in $\SL(3,\CC)$} \label{sec:reducible}

We start by describing the
\emph{totally reducible} characters already given in
Section~\ref{sec:general}, Equation~\eqref{eq:tot-red},
with the coordinates \eqref{eq:parameters}:
$$
\alpha=\bar\alpha=\beta=\bar\beta=3, \ (y,\bar{y})\in \CC^2, \ z = y^2-2\bar{y},\textrm{ and } 
\bar{z} = \bar{y}^2-2y\, .
$$
Here we have used that, by the Cayley-Hamilton theorem, 
for every $A\in\SL(3,\CC)$ the equality
\( \tr(A^2) = \tr^2(A) - 2\tr(A^{-1})\) holds.

Now we move to \emph{partially reducible} representations, that is,
representations that are a direct sum of a $2$-dimensional representation
and a $1$-dimensional representation.
Let $\rho_2\co\Gamma\to\GL(2,\CC)$ be irreducible. Then
$\rho = \rho_2 \oplus (\det\,\rho_2)^{-1}$ is partially reducible, and if
$(v,w,x_1)$ denote the coordinates of the character $\chi_2 := \chi_{\rho_2}$ then
the coordinates of $\chi_\rho$ are functions of $\chi_2=(v,w,x_1)$. More precisely:
\[
  \alpha(\chi_2)  =\bar\alpha(\chi_2)=x_1+1, \quad
   \beta(\chi_2) =\bar\beta(\chi_2)=x_2+1, \quad
  y(\chi_2)=v+\frac{1}{w}, \quad \bar{y}(\chi_2) =\frac{v}{w}+w. %\\
% z(\chi_\rho)&=v^2-w x_2+ \frac{1}{w^2} ,\ & 
% \bar{z}(\chi_\rho)&= \frac{v^2}{w}- x_2+ \frac{1}{w^3}
\]
%
%\begin{alignat*}{2}
%  \alpha(\chi_\rho) & =\bar\alpha(\chi_\rho)=x_1+1, \quad  & \beta(\chi_\rho)&=\bar\beta(\chi_\rho)=x_2+1, \\
%  y(\chi_\rho)&=v+\frac{1}{w}, \ & \bar{y}(\chi_\rho)&=\frac{v}{w}+w , %\\
%% z(\chi_\rho)&=v^2-w x_2+ \frac{1}{w^2} ,\ & 
%% \bar{z}(\chi_\rho)&= \frac{v^2}{w}- x_2+ \frac{1}{w^3}
%\end{alignat*}
In order to calculate $z(\chi_2)$, we will use that for all $Y,Z\in\GL(2,\CC)$ the identities
\[
\tr(YZ) = \tr(Y)\tr(Z) - \det(Y)\tr(Y^{-1}Z)  \qquad \text{ and } \qquad \tr(Y^{-1})\det(Y) =\tr(Y)
\]
hold. This gives
\[
z(\chi_2) =v^2-w x_2 + \frac{1}{w^2} \qquad \text{ and } \qquad 
\bar{z}(\chi_2)= \frac{v^2-wx_2}{w^2} + w^2.
\]
Now, we have 
\[ \frac{v^2}{w} = x_1+x_2 +1\]
and hence we obtain
\begin{alignat}{2} \label{eqn:PR}
  \alpha(\chi_2)  =\bar\alpha(\chi_2)&=x_1+1, \quad &
   \beta(\chi_2) =\bar\beta(\chi_2)&=x_2+1, \notag\\
  y(\chi_2)&=v+\frac{1}{w}, \quad & \bar{y}(\chi_2) &=\frac{v}{w}+w. \\
z(\chi_2) & = w (x_1+1)+ \frac{1}{w^2},\quad &
 \bar{z}(\chi_2) &= \frac{x_1+1}{w}+ w^2\,.\notag
 \end{alignat}
It follows that the component $X_\mathrm{PR}(\Gamma,\SL(3,\CC))$ of partially 
reducible characters is parametrized by the component $X_2 = 
(E\times \CC^*)/\mu_2 \subset X(\Gamma,\GL(2,\CC))$ of irreducible characters.
If  $\rho_2,\rho'_2\in R(\Gamma,\GL(2,\CC))$ are two semisimple representations then
$\rho =\rho_2 \oplus (\det\,\rho_2)^{-1}$ and $\rho' =\rho'_2 \oplus (\det\,\rho'_2)^{-1}$
determine the same character in $X(\Gamma,\SL(3,\CC))$ if and only if they are conjugate. 
Here we have used that each point  of $X(\Gamma,\SL(3,\CC))$ is the character of a semi-simple 
representation which is unique up to conjugation \cite{LM}. Hence, two characters 
$\chi_2,\,\chi_2'\in X_2$ can only give the same character of $X(\Gamma,\SL(3,\CC))$ 
if $\rho_2$ and $\rho'_2$ are reducible i.e.\ if for the corresponding parameters
$(x_1,v,w)$ and $(x'_1,v',w')$ of $\chi_2$ and $\chi'_2$ respectively the equations
$x_1=x_1'=2$, $v^2 = 5w$, and  $(v')^2 = 5w'$ hold. 
Therefore, if $\chi_2$ and $\chi_2'\in X_2$ give the same character of $X(\Gamma,\SL(3,\CC))$ 
then $y(\chi_2)=y(\chi'_2)$ and $\bar y(\chi_2)=\bar y(\chi'_2)$. This is equivalent to
\[ v +  \frac{5}{v^2} =  v' +  \frac{5}{(v')^2} \ \text{ and }\ 
\frac{5}{v} +  \frac{v^2}{5} =  \frac{5}{v'} +  \frac{(v')^2}{5} \,.\]
 If $v=v'$ then $w=w'$, and $\chi_2=\chi_2'$ follows. If $v\neq v'$ then $(vv')^3=125$, and there
 are exactly three pairs of reducible characters which map to the same the character in 
 $X(\Gamma,\SL(3,\CC))$:
 \begin{align*}
 \left(\frac{5}{2} \pm \frac{\sqrt5}{2},\frac{3}{2} \pm \frac{\sqrt5}{2},2 \right)& \mapsto
 (4,4,8,8,3,3,3,3)\,,\\
  \left(\varpi\left(\frac{5}{2} \pm \frac{\sqrt5}{2}\right),\varpi^2\left(\frac{3}{2} \pm \frac{\sqrt5}{2}\right),2\right)& \mapsto
\varpi\cdot (4,4,8,8,3,3,3,3)\,,\\
  \left(\varpi^2\left(\frac{5}{2} \pm \frac{\sqrt5}{2}\right),\varpi\left(\frac{3}{2} \pm \frac{\sqrt5}{2}\right),2\right)& \mapsto
\varpi^2 \cdot (4,4,8,8,3,3,3,3)\,.
 \end{align*}

\begin{proposition} \label{prop:SL3-reducibles}
 The locus of reducible representations of the
character variety $X(\Gamma,  \SL(3,\CC))$ consists of two irreducible components:
  \begin{itemize}
 \item The component $X_\mathrm{TR}:=X_\mathrm{TR}(\Gamma,  \SL(3,\CC))$ contains only characters of totally reducible representations and it is isomorphic to $\CC^2$:
 \[ 
 \big\{ (y,\bar y,z,\bar z,\alpha,\bar \alpha,\beta,\bar\beta)\in\CC^8 \mid
 \alpha=\bar\alpha= \beta=\bar\beta=3,\; z=y^2-2\bar y,\; \bar z= \bar y^2-2 y\big\}\,.\]
 \item The component $X_\mathrm{PR}:=X_\mathrm{PR}(\Gamma,  \SL(3,\CC))$ contains only characters of reducible representations is parametrized by the component 
 $X_2  \subset X(\Gamma,\GL(2,\CC))$ (see Proposition~\ref{prop:GL2}).
A parametrization is given by:
  \begin{alignat*}{2}
    \alpha(v,w,x_1)=\bar\alpha(v,w,x_1)&= x_1+1, \quad & 
    \beta(v,w,x_1)=\bar\beta(v,w,x_1)&=\frac{x_1}{x_1-1} +1 ,\\ 
  y(v,w,x_1)=v+\frac1w,\quad &  \bar{y}(v,w,x_1)=w+\frac{v}{w}, &
z(v,w,x_1)=w\, \alpha+\frac{1}{w^2},&\quad 
\bar z(v,w,x_1)=\frac{\alpha}{w}+w^2\,.
  \end{alignat*}  
 % \begin{align*}
%   & \alpha=\bar\alpha , \ \beta=\bar\beta ,\ 
%   \alpha\beta-2\alpha-2\beta+3=0 ,\  y=v+\frac1w, \ \bar{y}=w+\frac{v}{w}, \\
%  &  z=w \alpha+\frac1{w^2}, \ \bar z=\frac{\alpha}{w}+w^2, \  (\alpha+\beta-1)w= v^2, 
%  \end{align*}
%  where $v\in\mathbb{C}$ and  $ w\in\mathbb{C}^*$.
 \end{itemize}
 The component $X_\mathrm{PR}$ is smooth except at the three points
 $\mu_3\cdot(4,4,8,8,3,3,3,3)$ which are contained in the intersection 
 $X_\mathrm{TR}\cap X_\mathrm{PR}$. Moreover,  $X_\mathrm{TR}\cap X_\mathrm{PR}$
is isomorphic to the plane curve with equations: 
$\alpha=\bar\alpha=\beta=
\bar\beta=3$, $64  - 28 y \bar{y} - y^2 \bar{y}^2 + 5(y^3 + \bar{y}^3)=0$,
$z=y^2-2\bar y$, $\bar z=\bar y^2-2 y$.
\end{proposition}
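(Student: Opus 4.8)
The plan is to assemble the computations carried out above into the two-component description, and then settle the smoothness and intersection claims. Since every point of $X(\Gamma,\SL(3,\CC))$ is the character of a semisimple representation, unique up to conjugation, and a three-dimensional semisimple representation decomposes as a sum of irreducibles whose dimensions partition $3$, a reducible character is either totally reducible (type $1+1+1$) or partially reducible (type $1+2$ with the two-dimensional summand irreducible); hence the reducible locus is $X_\mathrm{TR}\cup X_\mathrm{PR}$. For $X_\mathrm{TR}$ I would argue as in Section~\ref{sec:general}: a totally reducible character is determined by a diagonal meridian image $\diag(\lambda_1,\lambda_2,\lambda_3)$ with product $1$, hence by $(\sigma_1,\sigma_2)=(y,\bar y)$; the commutator subgroup acts trivially so $\alpha=\bar\alpha=\beta=\bar\beta=3$, while $\tr(A^2)=\tr^2(A)-2\tr(A^{-1})$ gives $z=y^2-2\bar y$ and $\bar z=\bar y^2-2y$. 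Thus $(y,\bar y)$ are free coordinates and $X_\mathrm{TR}\cong\CC^2$ is smooth and irreducible.

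Next, every partially reducible character has the form $\chi_\rho$ with $\rho=\rho_2\oplus(\det\rho_2)^{-1}$ for an irreducible $\rho_2\co\Gamma\to\GL(2,\CC)$, whose character lies in the component $X_2$ of Proposition~\ref{prop:GL2}; conversely the parametrization \eqref{eqn:PR} sends $X_2$ onto $X_\mathrm{PR}$. This defines a surjective morphism $\Phi\co X_2\to X_\mathrm{PR}$, and by the discussion preceding the proposition $\Phi$ is injective except over the three points $\mu_3\cdot(4,4,8,8,3,3,3,3)$, each of which is the image of a pair of distinct characters lying on the locus $x_1=2$ (where $\rho_2$ degenerates to a reducible representation). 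Since $X_2$ is an irreducible surface, $X_\mathrm{PR}$ is irreducible as well.

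The main work is the smoothness statement. I would first show that $\Phi$ is an immersion, i.e. that the Jacobian of \eqref{eqn:PR} in local coordinates on $X_2$ has rank $2$ everywhere; this is a direct, software-assisted computation. Together with injectivity away from the three special pairs, this makes $\Phi$ a local embedding there, so $X_\mathrm{PR}$ is smooth off $\mu_3\cdot(4,4,8,8,3,3,3,3)$. At each of those three points $\Phi$ identifies two distinct points $q_1,q_2\in X_2$, and I would compute the two tangent planes $d\Phi_{q_i}(T_{q_i}X_2)\subset\CC^8$ and check that they are distinct; the span of two distinct $2$-planes has dimension at least $3$, so the Zariski tangent space of $X_\mathrm{PR}$ exceeds $\dim X_\mathrm{PR}=2$ and the point is singular. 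I expect this local analysis at the gluing points to be the main obstacle, since one must confirm genuine transversality of the two branches rather than a mere coincidence of the parametrizations.

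Finally, for $X_\mathrm{TR}\cap X_\mathrm{PR}$, imposing $\alpha=3$ in \eqref{eqn:PR} forces $x_1=2$ and then $\beta=3$ automatically, so the intersection is the image of $\{x_1=2\}\subset X_2$, parametrized by $(v,v^2/5,2)$ with $v\in\CC^*$. Substituting $w=v^2/5$ into \eqref{eqn:PR} one verifies directly that $z=y^2-2\bar y$ and $\bar z=\bar y^2-2y$, so this image indeed lies in $X_\mathrm{TR}$; eliminating $v$ from $y=v+5/v^2$ and $\bar y=v^2/5+5/v$ then yields the stated plane curve $64-28y\bar y-y^2\bar y^2+5(y^3+\bar y^3)=0$. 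As the three singular points satisfy $\alpha=\bar\alpha=\beta=\bar\beta=3$, they lie on this curve, which completes the proof.
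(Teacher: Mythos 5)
Your proposal is correct and follows essentially the same route as the paper, whose proof of this proposition is the discussion in Section~\ref{sec:reducible}: the abelianization plus Cayley--Hamilton argument for $X_\mathrm{TR}$, the parametrization of $X_\mathrm{PR}$ by $X_2$ via $\rho_2\oplus(\det\rho_2)^{-1}$ together with the fiber analysis showing $\Phi$ is injective except for the three pairs of reducible $\GL(2,\CC)$-characters on $x_1=2$ mapping to $\mu_3\cdot(4,4,8,8,3,3,3,3)$, and the immersion/nodal-branch verification recorded in the remark following the proposition. The only difference is one of emphasis: you spell out the tangent-plane argument at the three glued points and the elimination of $v$ yielding the intersection curve, steps the paper leaves as ``it can be checked''.
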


\begin{remark}
It can be checked that the parametrization of $X_\mathrm{PR}$ is an immersion and that the singularities are nodal, i.e.\ 
two branches of the parametrization are smooth and intersect transversely at each of the three singular points
 $\mu_3\cdot(4,4,8,8,3,3,3,3)$.
\end{remark}

To finish the section, we describe the set of reducible characters in $X(\Gamma,  \SL(3,\CC))$
that lie in the closure of the components of irreducible characters. Recall that the set of
reducible characters $X_{red}=X_{\mathrm{TR}}\cup X_{\mathrm{PR}}$ is Zariski-closed and its 
complement $X_\mathrm{irr}$ is Zariski-open.

\begin{lemma}
\label{lemm:redtoirreduc}
The set $X_{red}(\Gamma,\SL(3,\CC))\cap\overline{ X_{irr}(\Gamma,\SL(3,\CC)) }$
is parametrized by a singular curve $\mathcal{C}\subset X_2$ given by
\[
\mathcal{C} = \big\{ (w,x_1)\in  \CC^*\x(\CC-\{1\})\mid 
w^6-2w^3\frac{2x_1^2+x_1-1}{x_1-1} +1
\big\}\,.
\]
More precisely, the curve $\mathcal{C}$ has exactly three singular points $\mu_3\times\{0\}$.
The parametrization is given by restricting the parametrization of Proposition~\ref{prop:SL3-reducibles} 
to $\mathcal{C}$ i.e. by substituting $v = (w^3+1)/2w$.
%is the set with coordinates (in the notation of Proposition~\ref{prop:SL3-reducibles})
% $\alpha=\bar\alpha$,  $\beta=\bar\beta$, $\alpha\beta-2\alpha-2\beta+3=0$,
% $w^3-2 v w+1=0$, $(\alpha+\beta-1)w= v^2$.
%  
%  
%  
%  
%  $ y  \bar{y}= 3 A+1$, $ y^3+\bar{y}^3=2(A^2+8 A-1)$, where $A=\alpha+\beta-1$.
In addition, all points of $X_{red}(\Gamma,\SL(3,\CC))\cap\overline{ X_{irr}(\Gamma,\SL(3,\CC)) }$ are smooth points of 
$X_\mathrm{red}(\Gamma,\SL(3,\CC))$ and $\overline{ X_\mathrm{irr}(\Gamma,\SL(3,\CC)) }$ 
respectively, with the exceptions of the six points
$\mu_3\cdot(2,2,2,2,1,1,1,1)$ and $\mu_3\cdot(4,4,8,8,3,3,3,3)$. 
\end{lemma}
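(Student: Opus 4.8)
The plan is to exploit the fact that, by Theorem~\ref{thm:main} and Proposition~\ref{prop:SL3-reducibles}, we already have explicit equations for every component of $X(\Gamma,\SL(3,\CC))$ together with all of their pairwise intersections, so the lemma reduces to assembling this data and performing a single elimination. Since $V_0$, $V_1$, $V_2$ are exactly the components whose generic character is irreducible, the irreducible characters are dense in their union and $\overline{X_{\mathrm{irr}}}=V_0\cup V_1\cup V_2$. Writing $X_{\mathrm{red}}=X_{\mathrm{TR}}\cup X_{\mathrm{PR}}$, I would first decompose
\[
X_{\mathrm{red}}\cap\overline{X_{\mathrm{irr}}} = \bigcup_{i=0}^{2}\bigl((X_{\mathrm{TR}}\cap V_i)\cup(X_{\mathrm{PR}}\cap V_i)\bigr),
\]
and read off each piece: the only contributions are the curve $X_{\mathrm{PR}}\cap V_0$, the three points $X_{\mathrm{TR}}\cap V_0=\mu_3\cdot(4,4,8,8,3,3,3,3)$, and the three points $X_{\mathrm{PR}}\cap V_1=X_{\mathrm{PR}}\cap V_2=\mu_3\cdot(2,2,2,2,1,1,1,1)$, while $X_{\mathrm{TR}}\cap V_1=X_{\mathrm{TR}}\cap V_2=\emptyset$.

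The next step is to show that the six isolated points above already lie on the curve $X_{\mathrm{PR}}\cap V_0$, so that the whole intersection is this single curve. The three points $\mu_3\cdot(4,4,8,8,3,3,3,3)$ lie on it because $V_0\cap X_{\mathrm{PR}}\cap X_{\mathrm{TR}}$ equals precisely these points, and the three points $\mu_3\cdot(2,2,2,2,1,1,1,1)$ will reappear as the image of the singular locus of $\mathcal{C}$. To obtain the equation of $\mathcal{C}$, I would take the description of $X_{\mathrm{PR}}\cap V_0$ as the locus in the parameter surface $X_2$ cut out by $w^3-2vw+1=0$, solve this for $v=(w^3+1)/(2w)$, and substitute into the defining relation $v^2=\tfrac{x_1^2+x_1-1}{x_1-1}\,w$ of $X_2$. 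A short manipulation, using $2\,\tfrac{x_1^2+x_1-1}{x_1-1}-1=\tfrac{2x_1^2+x_1-1}{x_1-1}$, turns this into $w^6-2w^3\tfrac{2x_1^2+x_1-1}{x_1-1}+1=0$, the stated equation for $\mathcal{C}$, and the substitution $v=(w^3+1)/(2w)$ is exactly the restriction of the parametrization of Proposition~\ref{prop:SL3-reducibles}.

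For the singular points of $\mathcal{C}$, writing $F(w,x_1)=w^6-2w^3Q(x_1)+1$ with $Q=\tfrac{2x_1^2+x_1-1}{x_1-1}$, I would solve $F=\partial_wF=\partial_{x_1}F=0$ on $\CC^*\times(\CC-\{1\})$. Since $w\neq0$, $\partial_wF=0$ forces $w^3=Q$, whence $F=0$ gives $Q^2=1$; and $\partial_{x_1}F=0$ gives $Q'(x_1)=0$, i.e.\ $x_1\in\{0,2\}$ after computing $Q'=\tfrac{2x_1(x_1-2)}{(x_1-1)^2}$. Only $x_1=0$ also satisfies $Q=\pm1$ (there $Q=1$, so $w^3=1$), while $x_1=2$ gives $Q=9$; hence the singular locus is exactly $\mu_3\times\{0\}$, and one checks directly that these three points map to $\mu_3\cdot(2,2,2,2,1,1,1,1)$.

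Finally, the smoothness statement follows by locating the curve inside the singular loci of $X_{\mathrm{red}}$ and of $\overline{X_{\mathrm{irr}}}$. On $X_{\mathrm{red}}=X_{\mathrm{TR}}\cup X_{\mathrm{PR}}$ the singularities lie in $\mathrm{Sing}(X_{\mathrm{PR}})=\mu_3\cdot(4,4,8,8,3,3,3,3)$ and along $X_{\mathrm{TR}}\cap X_{\mathrm{PR}}$; since the curve meets $X_{\mathrm{TR}}$ only in $\mu_3\cdot(4,4,8,8,3,3,3,3)$, these are its only non-smooth points of $X_{\mathrm{red}}$. On $\overline{X_{\mathrm{irr}}}=V_0\cup V_1\cup V_2$ the singularities lie in $\bigcup_i\mathrm{Sing}(V_i)=\mu_3\cdot(2,2,2,2,1,1,1,1)$ and along the intersections $V_i\cap V_j$; since the curve meets $V_1$ and $V_2$ only in $\mu_3\cdot(2,2,2,2,1,1,1,1)$, these are its only non-smooth points of $\overline{X_{\mathrm{irr}}}$, giving the six exceptional points. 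The main obstacle is the middle step's coordinate bookkeeping: correctly translating between the trace coordinates of Theorem~\ref{thm:main} and the $X_2$-parameters $(v,w,x_1)$ and eliminating $v$ so that $X_{\mathrm{PR}}\cap V_0$ is identified verbatim with $\mathcal{C}$, while keeping track that the non-injectivity of the $X_{\mathrm{PR}}$-parametrization over $x_1=2$ produces nodes in the image yet leaves $\mathcal{C}$ itself smooth there.
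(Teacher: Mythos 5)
Your proposal is circular with respect to the paper's logical structure, and the circularity hides the one genuinely substantive step. This lemma lives in Section~\ref{sec:reducible}, \emph{before} the components $V_0,V_1,V_2$ are constructed: its conclusions feed (via Corollary~\ref{cor:fibre}) into Proposition~\ref{prop:parameters}, into the smoothness statement Proposition~\ref{prop:smooth}, and into the intersection description in the proof of Proposition~\ref{prop:distinguishedComp}. In particular, the two facts you propose to ``read off'' --- that $X_\mathrm{PR}\cap V_0$ is cut out in $X_2$ by $w^3-2vw+1=0$, and that the $V_i$ are smooth away from $\mu_3\cdot(2,2,2,2,1,1,1,1)$ --- are, in the paper, \emph{consequences} of this very lemma, not available hypotheses; the intersection bullet list after Theorem~\ref{thm:main} is a summary of results proved later. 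The missing idea is the deformation-theoretic criterion. The paper writes a reducible semisimple representation as $\rho=(\phi\otimes\varrho)\oplus\phi^{-2}$ and invokes \cite[Theorem~1.3]{HeusenerP}: a type-$(2,1)$ character lies in $\overline{X_{irr}(\Gamma,\SL(3,\CC))}$ only if $\xi=\phi(t)$ satisfies $\xi^6-2\chi_\varrho(t)\xi^3+1=0$, coming from the twisted Alexander polynomial $\Delta^{\varrho}(x)=x^2-2\chi_\varrho(t)x+1$ (computed following Kitano); with $w=\phi(t)^2$ and $v=\chi_\varrho(t)\phi(t)$ this is exactly $w^3-2vw+1=0$. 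A separate argument, using the untwisted Alexander polynomial $t^2-3t+1$ and \cite{HPS}, handles totally reducible characters and yields $X_\mathrm{TR}\cap\overline{X_{irr}}\subset\overline{X_\mathrm{PR}}$; and the converse direction together with the smoothness assertion away from $(\alpha,\beta)=(1,1),(3,3)$ rests on \cite[Corollary~8.9]{HeusenerP}, because the relevant roots of the twisted Alexander polynomial are simple. None of this can be replaced by citing Theorem~\ref{thm:main}: that theorem's intersection and smoothness claims are themselves proved using the present lemma, so your argument assumes precisely what is to be shown.

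On the other hand, the computations you actually carry out are correct and coincide with the paper's: substituting $v=(w^3+1)/(2w)$ into $v^2=\frac{x_1^2+x_1-1}{x_1-1}\,w$ and using $2\,\frac{x_1^2+x_1-1}{x_1-1}-1=\frac{2x_1^2+x_1-1}{x_1-1}$ gives the stated sextic; your singular-locus analysis (from $\partial_wF=0$ one gets $w^3=Q$, then $F=0$ forces $Q^2=1$, and $Q'=\frac{2x_1(x_1-2)}{(x_1-1)^2}=0$ leaves only $x_1=0$ with $w\in\mu_3$, since $x_1=2$ gives $Q=9$) is exactly right, as is the identification of the images with $\mu_3\cdot(2,2,2,2,1,1,1,1)$. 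So the back end of your write-up is sound bookkeeping; what is missing is any independent derivation of the equation $w^3-2vw+1=0$ and of the smoothness statement --- which is the actual mathematical content of the lemma.
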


\begin{proof}
A reducible and semisimple representation  $\rho\co\Gamma\to \SL(3,\CC)$ 
with character $\chi_\rho$
can be written as $\rho=({\phi}\otimes\varrho)\oplus \phi^{-2}$,
with representations $\phi\co\Gamma\to\CC^*$ and $\varrho\co\Gamma\to \SL(2,\CC)$. Namely,
$\rho(\gamma)=\diag(\phi(\gamma)\varrho(\gamma),\phi(\gamma)^{-2})$, $\forall\gamma\in\Gamma$.
Thus
\begin{equation}
\label{eqn:yandz}
% y=\chi_{\rho}(t)=\phi(t)\chi_{\varrho}(t)+\phi(t)^{-2}\quad\textrm{ and }\quad
% \bar{y}=\chi_{\rho}(t^{-1})=\phi(t)^{-1}\chi_{\varrho}(t)+\phi(t)^2.
v=\chi_{\varrho}(t) \phi(t)\quad \textrm{ and }\quad w= \phi(t)^2\, .
\end{equation}
% If $v$ and $w$ are as in  Proposition~\ref{prop:SL3-reducibles}, then 
% $v=\chi_{\varrho}(t) \phi(t)$ and $w= \phi(t)^2$.
By \cite[Theorem~1.3]{HeusenerP},  if  $\chi_\rho\in    \overline{X_{irr}(\Gamma,\SL(3,\CC))}$
is of type (2,1), then $\xi=\phi(t)$
satisfies
\begin{equation}
 \label{eqn:zeroAlex}
\xi^6-2\chi_\varrho(t) \xi^3+ 1=0.
 \end{equation}
This uses that the Alexander polynomial twisted by $\varrho$ is  
$\Delta^{\varrho}(x)=x^2-2\chi_\varrho(t) x+1$.
Kitano \cite{Kitano} has computed  $ \Delta^{\varrho}(1)=2-2\chi_\varrho(t)$, and
the same computation yields 
$ \Delta^{\varrho}(x)$ as follows: using the fibration of the figure eight
knot, we know that $ \Delta_{\varrho}(x)=x^2-\delta(\chi) x+1$ for some
regular function $\delta\co X_{irr}(\Gamma,\SL(2,\CC))\to \CC$; in addition $\delta$ is determined from  
$ \Delta^{\varrho}(1)=2-2\chi_\varrho(t)$. This implies that
$\xi^3+\xi^{-3}=2\chi_\varrho(t)$.
This condition is necessary for a partially reducible representation $\rho$, i.e.~when $\varrho$ is irreducible.
The representation $\varrho$ is reducible precisely when $ \alpha=\beta=3$ and the 
condition is also necessary in this case  if we can show that
\begin{equation}
\label{eqn:aprox}
X_\mathrm{TR}(\Gamma,\SL(3,\CC))\cap \overline{X_{irr}(\Gamma,\SL(3,\CC))}\subset
\overline{X_\mathrm{PR}(\Gamma,\SL(3,\CC))}.
\end{equation}
To prove \eqref{eqn:aprox}, the discussion  in \cite{HPS} using twisted cohomology yields that
the ratio between two eigenvalues of $\rho(t)$ is a root  of
the untwisted Alexander polynomial $t^2-3t+1$. By
\cite{HPS}, this is also a sufficient condition for a $2\times 2$
block of $\rho$ being approximated by irreducible representations
in $\GL(2,\CC)$,
because it is a simple root.
Using \eqref{eqn:yandz} and  \eqref{eqn:zeroAlex}, we get
$
w^3-2 v w+1=0$, equivalently  $v=\frac{w^3+1}{2w}$.
By replacing the value of $v$ in   $\frac{x_1^2 +x_1-1}{x_1-1} w = v^2$
we get the equation of the lemma.
% 
% and that $A=\chi_\varrho(t)^2$,
% we deduce the formulas for $y \bar{y}$ and $y^3+\bar{y}^3$. It is easy to chech that 
% $\alpha$, $\beta$, $y$, and $\bar{y}$ determine the values of the (semisimple
% reducible) representation up to conjugation.
 
Finally, we notice that when $(\alpha,\beta)\neq (1,1), (3,3)$,
\cite[Corollary 8.9]{HeusenerP} applies and corresponding characters are smooth points of 
$X_\mathrm{PR}$ and $\overline{X_\mathrm{irr}}$ respectively,
as the corresponding roots of the twisted Alexander polynomial are simple.
\end{proof}

From Proposition~\ref{lemm:redtoirreduc} it follows that 
 \begin{equation}\label{eq:fibre}
 w^6-2w^3\alpha\frac{2\alpha-3}{\alpha-2}+1=0
 \end{equation}
on $X_{red}(\Gamma,\SL(3,\CC))\cap \overline{X_{irr}(\Gamma,\SL(3,\CC))}$. Thus we get:

\begin{corollary} \label{cor:fibre}
The fiber of the projection 
$$
  X_{red}(\Gamma,\SL(3,\CC))\cap \overline{X_{irr}(\Gamma,\SL(3,\CC))}\to \{(\alpha,\beta)\in \CC^2\mid (\alpha-2)(\beta-2)=1\}\cong
  \CC-\{2\}
$$
has six points except when $(\alpha,\beta)= (1,1)$, $(3,3)$, or 
$\left(\frac{1\pm\sqrt{5}}{2}, \frac{1\mp\sqrt{5}}{2}\right)$, where it has three points.
\end{corollary}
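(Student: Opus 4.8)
The plan is to count, for each point $(\alpha,\beta)$ of the base, the solutions in $w$ of the single equation \eqref{eq:fibre}, and then to correct this count at the finitely many points where the parametrization of Lemma~\ref{lemm:redtoirreduc} fails to be injective. By Lemma~\ref{lemm:redtoirreduc} the source $X_\mathrm{red}\cap\overline{X_\mathrm{irr}}$ is the image of the curve $\mathcal{C}$ under the parametrization of Proposition~\ref{prop:SL3-reducibles} restricted via $v=(w^3+1)/(2w)$; on $\mathcal{C}$ one has $\alpha=x_1+1$ with $\beta$ determined by $(\alpha-2)(\beta-2)=1$, so the base is identified with $\{\alpha\ne 2\}\cong\CC-\{2\}$, and the fiber over $(\alpha,\beta)$ is carried by the set of $w\in\CC^*$ satisfying \eqref{eq:fibre}.

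Setting $u=w^3$ and $c=\alpha\frac{2\alpha-3}{\alpha-2}$, equation \eqref{eq:fibre} becomes $u^2-2cu+1=0$. Its two roots multiply to $1$, so neither is $0$, each admits exactly three distinct cube roots in $\CC^*$, and two distinct roots $u_1\ne u_2$ have disjoint cube-root sets (a common cube root would force $u_1=u_2$). Hence $\mathcal{C}$ has six points over $(\alpha,\beta)$ when $c^2\ne 1$, and exactly three points when $c=\pm 1$. A direct computation then solves $c=\pm1$: one finds $c=1\iff(\alpha-1)^2=0$, giving $\alpha=1$ and hence $(\alpha,\beta)=(1,1)$, while $c=-1\iff\alpha^2-\alpha-1=0$, giving $\alpha=\frac{1\pm\sqrt5}{2}$ and hence $(\alpha,\beta)=\bigl(\frac{1\pm\sqrt5}{2},\frac{1\mp\sqrt5}{2}\bigr)$. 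Since none of these three base points has $x_1=2$, the parametrization is injective there, so over each of them the fiber of the projection has exactly three points.

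It remains to pass from the count of points of $\mathcal{C}$ to the count of characters, and to reconcile the value at $(\alpha,\beta)=(3,3)$, i.e.\ $x_1=2$, where $c=9\ne\pm1$ so \eqref{eq:fibre} still has six solutions $w$ yet the corollary asserts only three characters. Here I would invoke the analysis preceding Proposition~\ref{prop:SL3-reducibles}: the parametrization $X_2\to X_\mathrm{PR}$ is injective except over the three points $\mu_3\cdot(4,4,8,8,3,3,3,3)$, where it is two-to-one, precisely because $\rho_2$ becomes reducible at $x_1=2$. Concretely, using $v=(w^3+1)/(2w)$ together with $v^2=5w$ on $\mathcal{C}$, the six values of $w$ over $x_1=2$ are the cube roots of the two roots $u_1,u_2=1/u_1$ of $u^2-18u+1=0$, and the identification of characters pairs each cube root of $u_1$ with the cube root of $u_2$ sharing the same value of $(y,\bar y)$, grouping the six points into three pairs that map bijectively onto $\mu_3\cdot(4,4,8,8,3,3,3,3)$. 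Away from $x_1=2$ the parametrization is injective, so there the six points of $\mathcal{C}$ yield six distinct characters. Assembling the three cases gives the statement.

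The main obstacle is exactly this last reconciliation at $(3,3)$. At $(1,1)$ and $\bigl(\frac{1\pm\sqrt5}{2},\frac{1\mp\sqrt5}{2}\bigr)$ the drop to three points comes from a genuine degeneration of \eqref{eq:fibre} (a double root $u$), which is transparent; but at $(3,3)$ there is \emph{no} degeneration of \eqref{eq:fibre}, and the naive solution count overcounts by a factor of two. One must instead use the failure of injectivity of the parametrization over the totally reducible locus $x_1=2$, established before Proposition~\ref{prop:SL3-reducibles}, to see that the six solutions collapse to three characters.
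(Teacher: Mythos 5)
Your proposal is correct and takes essentially the same route as the paper: the paper likewise counts solutions of \eqref{eq:fibre}, locating the degenerate fibres via its discriminant $4(\alpha^2-\alpha-1)(\alpha-1)^2$ (vanishing exactly at $(\alpha,\beta)=(1,1)$ and $\bigl(\frac{1\pm\sqrt{5}}{2},\frac{1\mp\sqrt{5}}{2}\bigr)$), and handles $(\alpha,\beta)=(3,3)$ by the same observation that each of the three characters $\mu_3\cdot(4,4,8,8,3,3,3,3)$ corresponds to \emph{two} values of $w$, namely $w^3=9\pm4\sqrt{5}$. Your write-up merely makes explicit what the paper leaves terse (the substitution $u=w^3$, the disjointness of cube-root sets, and the pairing of the six solutions over $x_1=2$ coming from the two-to-one behaviour of the parametrization established before Proposition~\ref{prop:SL3-reducibles}).
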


\begin{proof} 
First notice that $\beta =(2\alpha-1)/(\alpha-2)$.
Each character in $ X_{red}(\Gamma,\SL(3,\CC))\cap \overline{X_{irr}(\Gamma,\SL(3,\CC))}$
determines a unique parameter $w$ with the exception of the three characters with coordinates 
$\mu_3\cdot(4,4,8,8,3,3,3,3,3)$. To such a singular point correspond two values of 
$w\in  \mu_3\cdot \left(\frac{3}{2} \pm \frac{\sqrt5}{2}\right)$. In this case 
$w^3 = 9\pm 4\sqrt{5}$ and $\alpha =\beta=3$ follows from~\eqref{eq:fibre}.

The discriminant of~\eqref{eq:fibre} is $4(\alpha^2 - \alpha - 1)(\alpha - 1)^2$ and hence it 
vanishes if and only if $\alpha = \beta =1$ or 
$\alpha=\frac{1\pm\sqrt{5}}{2}$ and $\beta = \frac{1\mp\sqrt{5}}{2}$.
\end{proof}

%\comment{M: This section is one of the most complicate things I have ever written down\ldots
\begin{remark} 
We will see later in Proposition~\ref{prop:smooth} that $\mu_3 \cdot (2,2,2,2,1,1,1,1)$ are the only singular points of 
the components of
$\overline{ X_{irr}(\Gamma,\SL(3,\CC)) }$.
We already saw that  $\mu_3 \cdot (2,2,2,2,1,1,1,1)$ are smooth points of $X_\mathrm{PR}$. Hence,
$\mu_3 \cdot (4,4,8,8,3,3,3,3)$ are smooth points on $\overline{X_\mathrm{irr}(\Gamma,\SL(3,\CC)) }$ and $X_\mathrm{TR}$,
and they are singular on $X_\mathrm{PR}$.
\end{remark}

%%%%%%%%%%%%%%%%%%%%%%%%%%%%%%%%%%%%%%%%%%%%%%%%%%%%%%%%%%%%%%%%% 
 \section{Restriction to the fibre} \label{sec:Lawton}
%%%%%%%%%%%%%%%%%%%%%%%%%%%%%%%%%%%%%%%%%%%%%%%%%%%%%%%%%%%%%%%%%

In this section and up to  Section~\ref{sec:irredPGL3GL3} we work with 
Presentation~\eqref{eq:pres-fiber}, corresponding to the fibration 
over the circle with fibre  a punctured torus. In particular, the group of the fibre
 $F_2$ is the free group of rank 2 generated by  $a$ and $b$.
To compute $X_{irr}(\Gamma, G)$ for $G=\SL(3,\CC)$, 
$\PGL(3,\CC)$, or $\GL(3,\CC)$, 
we look at the restriction: 
$$
 \res :X(\Gamma,G)\to X(F_2,\SL(3,\CC)),
$$
whose image does not depend on $G$.
For $X(F_2,\SL(3,\CC))$, we use Lawton's description in \cite{Lawton0}. 
According to it, there is a two fold branched
covering 
$$
 \pi:X(F_2,\SL(3,\CC))\to \CC^8,
$$ 
where the coordinates of $\CC^8$ are the traces of 
\begin{equation}
 \label{eq:8}
 a,\ a^{-1},\ b,\ b^{-1},\ ab,\  b^{-1} a^{-1}, \ ab^{-1}, \ a^{-1} b.  
\end{equation}
 The branched covering comes from a ninth coordinate, which is the trace of the commutator
$$
[a,b]=a b a^{-1} b^{-1}.
$$
This trace satisfies a polynomial equation
$$
x^2-P x+Q=0,
$$
where $P$ and $Q$ are polynomials on the first eight variables
(see \cite{Lawton0} for the expression of $P$ and $Q$). 
The solutions are precisely
the trace of $[a,b]$ and the trace of its inverse. 
 
First of all we reduce the eight coordinates to four by using conjugation identities:
 \begin{align*}
 \alpha &=\chi(a)=\chi(ab), \\ 
 \bar\alpha &=\chi(a^{-1})=\chi( b^{-1} a^{-1}), \\
 \beta &=\chi(b)=\chi(  a^{-1}  b), \\ 
 \bar\beta &=\chi(b^{-1})=\chi(a b^{-1}). 
 \end{align*}
%The next lemma give the final result: 
 
 \begin{lemma}\label{lem:projection}
 The projection
$\pi(\res (X(\Gamma)))$
has three components:
\begin{align*}
 U_0 &= \{ (\alpha,\bar\alpha,\beta,\bar \beta)\in\mathbb{C}^4\mid \alpha=\bar\alpha, \beta=\bar\beta \}, \\
 U_1 &= \{ (\alpha,\bar\alpha,\beta,\bar \beta)\in\mathbb{C}^4\mid \alpha=\bar\alpha=1 \}, \\
 U_2 &= \{ (\alpha,\bar\alpha,\beta,\bar \beta)\in\mathbb{C}^4\mid \beta=\bar\beta=1 \}. 
\end{align*}
\end{lemma}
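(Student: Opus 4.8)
The plan is to identify $\res(X(\Gamma))$ with the fixed-point locus of the monodromy and then convert the fixed-point condition into explicit polynomial equations via the $\SL(3,\CC)$ trace calculus. Let $\phi\co F_2\to F_2$ be the monodromy automorphism induced by conjugation by $t$ in Presentation~\eqref{eq:pres-fiber}, so $\phi(a)=ab$ and $\phi(b)=bab$. For any $\rho\co\Gamma\to\SL(3,\CC)$ and $w\in F_2$ one has $\chi_\rho(\phi(w))=\tr(\rho(t)\rho(w)\rho(t)^{-1})=\chi_\rho(w)$, so every character in the image is fixed by $\phi^*\co\chi\mapsto\chi\circ\phi$ on $X(F_2,\SL(3,\CC))$; conversely a fixed irreducible character extends to $\Gamma$ by taking $\rho(t)$ to be an intertwiner. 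Applying $\chi\circ\phi=\chi$ to the generators $a,a^{-1},ab^{-1},a^{-1}b$ and using $\phi(a)=ab$, $\phi(a^{-1})=b^{-1}a^{-1}$, $\phi(ab^{-1})=b^{-1}$ and $\phi(a^{-1}b)\sim b$ reproduces exactly the four conjugacy identifications displayed before the lemma, so on the image the eight Lawton coordinates collapse to $(\alpha,\bar\alpha,\beta,\bar\beta)$.

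The conceptual point that makes the computation close up is that $\phi$ fixes the peripheral class on the nose: a direct cancellation gives $\phi([a,b])=[ab,bab]=[a,b]$, whence the ninth (commutator) coordinate is automatically $\phi$-invariant and imposes nothing. It remains to impose invariance of the other generators. Since the fixed locus of $\phi$ coincides with that of $\phi^{-1}$, where $\phi^{-1}(a)=a^2b^{-1}$ and $\phi^{-1}(a^{-1})=ba^{-2}$, I will impose invariance of $b,b^{-1}$ under $\phi$ and of $a,a^{-1}$ under $\phi^{-1}$. Each reduces to the four coordinates through the $\SL(3,\CC)$ identity $\tr(A^2B)=\tr(A)\tr(AB)-\tr(A^{-1})\tr(B)+\tr(A^{-1}B)$: for instance $\chi(\phi(b))=\chi(bab)=\chi(ab^2)=\chi(b)\chi(ab)-\chi(b^{-1})\chi(a)+\chi(ab^{-1})=\alpha\beta-\alpha\bar\beta+\bar\beta$, and setting this equal to $\chi(b)=\beta$ yields $(\alpha-1)(\beta-\bar\beta)=0$. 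The analogous reductions of $\chi(\phi(b^{-1}))=\chi(b^{-1})$, $\chi(\phi^{-1}(a))=\chi(a)$ and $\chi(\phi^{-1}(a^{-1}))=\chi(a^{-1})$ give
\[
(\bar\alpha-1)(\beta-\bar\beta)=0,\qquad (\alpha-\bar\alpha)(\bar\beta-1)=0,\qquad (\alpha-\bar\alpha)(\beta-1)=0.
\]

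A short case analysis of these four equations produces the three strata. If $\beta\neq\bar\beta$, the first two force $\alpha=\bar\alpha=1$, so the point lies in $U_1$; if $\alpha\neq\bar\alpha$, the last two force $\beta=\bar\beta=1$, so it lies in $U_2$; the remaining case is $\alpha=\bar\alpha$, $\beta=\bar\beta$, which is $U_0$. Each $U_i$ is a linear subspace of $\CC^4$ and none is contained in another, so these are precisely the irreducible components and we obtain the inclusion $\pi(\res(X(\Gamma)))\subseteq U_0\cup U_1\cup U_2$.

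For the reverse inclusion I must show each $U_i$ is genuinely attained, i.e.\ that over a dense subset of each stratum the full fixed-point system — including the invariance of the longer generators $ab$, $(ab)^{-1}$, whose $\phi$-images $ab^2ab$ and its inverse must be checked to add no constraint beyond the four equations above — is solved by an actual $\phi$-fixed character that extends to $\Gamma$. I expect this realization step to be the main obstacle: the forward direction is clean, but verifying that the longer-word conditions impose nothing new on each $U_i$ is the computational residue (well suited to the Sage/Mathematica verification), while the existence of the extending representations over $U_0$, $U_1$ and $U_2$ is exactly what the explicit constructions of the components $V_0$, $V_1$ and $V_2$ in the subsequent sections will ultimately supply.
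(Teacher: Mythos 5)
Your proposal is correct in substance and takes essentially the paper's route: both identify $\res(X(\Gamma))$ with the fixed locus of the monodromy $\phi^*$ on $X(F_2,\SL(3,\CC))$ in Lawton's coordinates (extension to $\Gamma$ via an intertwiner playing the role of $\rho(t)$), observe that $\phi([a,b])=[a,b]$ makes the ninth coordinate vacuous, and reduce the remaining fixed-point conditions by Cayley--Hamilton trace identities to exactly the same four factored equations $(\alpha-\bar\alpha)(\beta-1)=(\alpha-\bar\alpha)(\bar\beta-1)=(\beta-\bar\beta)(\alpha-1)=(\beta-\bar\beta)(\bar\alpha-1)=0$, whose case analysis yields $U_0\cup U_1\cup U_2$.

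The one divergence is which words you feed into the trace calculus: you get two of the equations from $\phi^{-1}(a^{\pm 1})=a^{2}b^{-1},\,ba^{-2}$, whereas the paper imposes $\phi$-invariance of $ab$ and $(ab)^{-1}$ directly (multiplying the Cayley--Hamilton relation for $BA$ by $A^{-1}$). The paper's choice makes the system manifestly exhaustive: invariance at $a,a^{-1},ab^{-1},a^{-1}b$ gives the coordinate collapse, invariance at $b,b^{-1},ab,(ab)^{-1}$ gives the four equations, and $[a,b]$ is fixed on the nose --- that accounts for all nine of Lawton's coordinates, so nothing is left to check. Consequently the ``longer-word residue'' you defer to software is really two lines, not an obstacle: on your locus $\chi(ab^2)=\chi(bab)=\beta$, hence $\chi(\phi(ab))=\chi(ab^2ab)=\alpha\beta-\bar\alpha\beta+\bar\alpha$, and equating this with $\chi(ab)=\alpha$ reproduces $(\alpha-\bar\alpha)(\beta-1)=0$, already in your list (same for the inverse word). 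Likewise the realization step need not wait for the constructions of $V_0$, $V_1$, $V_2$: your intertwiner argument works verbatim for \emph{every} fixed character, reducible or not, because each character is the character of a unique semisimple representation up to conjugation, so $\chi\circ\phi=\chi$ forces $\rho\circ\phi\cong\rho$ and an intertwiner exists (rescale by a cube root of its determinant to land in $\SL(3,\CC)$); combined with Lawton's theorem --- every point of $U_i$ together with a root $\eta$ of $\eta^2-P\eta+Q=0$ \emph{is} a character of $F_2$ --- this gives surjectivity onto each $U_i$ at once, which is exactly the paper's one-sentence closing of the reverse inclusion.
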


\begin{proof}
We mimic the proof for $\SL(2,\CC)$ in Section~\ref{section:SL2}, i.e.\
we look at the fixed points in $\CC^8$ of the action of the monodromy (conjugation by $t$).
Since for each character there is a unique semisimple representation, there is always a matrix that realizes the conjugation,
and this can be taken as the image of $t$ for extending the character to $\Gamma$.  
Four of the identities that  we get by looking at the fixed points are equivalent to the reduction from eight to four coordinates.
We discuss the other four identities.
 
The first identity comes from taking traces on the image of
$$
t  a  b t^{-1}=a  b^2a b .
$$
To express the trace of $a  b^2a b $ using Lawton's coordinates we use the $\SL(3,\CC)$ analog of the identities for $\SL(2,\CC)$ in \eqref{eqn:basicSL2}.
Denote the images in $\SL(3,\CC)$ by capital letters.
From the characteristic polynomial identity we have
$$
(B A)^3-\alpha (B A)^2+ \bar \alpha   B A-\mathrm{Id}=0.
$$
Multiplying by $A^{-1} $ we get
$$
BABAB=\alpha BAB -\bar \alpha B+  A^{-1}.
$$ 
Thus, by taking traces we get the equation 
\begin{equation} \label{eqn:a1}
\alpha=\alpha\beta-\bar\alpha\beta+\bar\alpha.
\end{equation}
Similarly, for $ t  (a b)^{-1}t^{-1}=(a  b^2a b )^{-1}$
we get
\begin{equation} \label{eqn:a2}
\bar\alpha=\bar\alpha\bar\beta-\alpha\bar\beta+\alpha . %\bar\alpha.
\end{equation}
The  third identity comes from $t  b t^{-1}=b a b$. 
From the characteristic polynomial identity we have
$$
B^3-\beta B^2+ \bar \beta  B -\mathrm{Id}=0.
$$
Multiplying by $B^{-1}A$ we get
$$
B^2  A =\beta BA -\bar \beta A+ B^{-1}  A.
$$
Thus, we get the equation 
\begin{equation} \label{eqn:a3}
 \beta= \beta \alpha -\bar\beta \alpha + \bar\beta.
\end{equation}
Similarly, from $t  b^{-1} t^{-1}=(b a b)^{-1}$ we get
\begin{equation} \label{eqn:a4}
 \bar\beta= \bar\beta \bar\alpha -\beta \bar\alpha + \beta.
\end{equation}
The statement follows from identities \eqref{eqn:a1}--\eqref{eqn:a4}. 
Notice that we do not need to compute more identities because of \cite{Lawton0}, 
and because $t[a,b]t^{-1}=[a,b]$.
\end{proof}

To get all the ambient coordinates we need a new variable: 
$$
\eta(\chi)=\chi([a,b])\, .
$$We know by \cite{Lawton0}
that 
\begin{equation}
 \label{eqn:polcommutator}
\eta^2-P \eta + Q=0,
 \end{equation}
for some polynomials $P,Q\in\ZZ[\alpha,\beta,\bar \alpha,\bar\beta]$. 
Using Lemma~\ref{lem:projection} and  by replacing the values of $P$ and $Q$ in \cite{Lawton0},
we obtain:

\begin{lemma} \label{lem:Wj}
 $W=\res (X(\Gamma,G))$ has three components $W_0$, $W_1$ and $W_2$, 
each $W_i$ being a two-fold ramified covering of $U_i$
 according to \eqref{eqn:polcommutator}.
 \begin{itemize}
  \item For $W_0$ the polynomials and the discriminant are
\begin{align*}
 Q =& \,  \alpha^4 \beta^2+ \alpha^2 \beta^4-2  \alpha^4\beta-4
  \alpha^3 \beta^2-4  \alpha^2 \beta^3-2 \alpha  \beta^4 +  \alpha^4\\
 & +2  \alpha^3\beta+12  \alpha^2 \beta^2+2
 \alpha  \beta^3+ \beta^4+4  \alpha^3+4  \beta^3-12  \alpha^2-12  \beta^2+9,\\[1ex]
 P=& \,  \alpha^2 \beta^2-2  \alpha^2\beta-2 \alpha  \beta^2+2 \alpha^2+2  \beta^2-3, \\[1ex]
P^2-4Q=& \, (\alpha^2 \beta^2-6 \alpha\beta -4 \alpha-4 \beta-3)(\alpha\beta-2\alpha-2\beta+3)^2.
\end{align*}
  \item For $W_1$, we have  
\begin{align*}
 Q=& \,  \bar \beta^3+ \beta^3-3  \beta\bar \beta+2, \\[1ex]
P=& \,  \beta\bar \beta-\beta-\bar \beta-1, \\[1ex]
P^2-4Q=& \,  \beta^2 \bar \beta^2-4  \beta^3-4   \bar \beta  ^3-2  \beta^2\bar \beta-2 \beta  
\bar \beta  ^2+{\beta
}^2+  \bar \beta  ^2+12 \beta
\bar \beta+2 \beta+2 \bar \beta -7.
\end{align*}
\item For $W_2$, we have
\begin{align*}
 Q=& \, \alpha^3+  \bar \alpha^3-3 \alpha\bar \alpha+2 , \\[1ex]
P=& \, \alpha\bar \alpha-\alpha-\bar \alpha-1,\\[1ex]
P^2-4Q=& \,\alpha^2 \bar \alpha^2-4  \alpha^3-4   \bar \alpha  ^3-2  \alpha^2
\bar \alpha-2\alpha\bar\alpha^2+ \alpha^2+ \bar\alpha^2+12 \alpha\bar \alpha+2 \alpha+2 \bar \alpha-7.
\end{align*}
 \end{itemize}
\end{lemma}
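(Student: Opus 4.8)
The plan is to leverage the two ingredients already in place: the identification of $W=\res(X(\Gamma,G))$ with the monodromy-fixed locus in $X(F_2,\SL(3,\CC))$, and the projection computation of Lemma~\ref{lem:projection}. Recall that a character of $F_2$ extends to $\Gamma$ precisely when it is fixed by conjugation by $t$ (since every character is that of a unique semisimple representation, which then admits a matrix realizing the conjugation), so $W$ is exactly this fixed locus and is independent of $G$. A point of $X(F_2,\SL(3,\CC))$ is pinned down by the nine Lawton coordinates: the eight traces of \eqref{eq:8} together with $\eta=\chi([a,b])$, the latter constrained by the quadratic \eqref{eqn:polcommutator}. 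My strategy is first to show $W_i=\pi^{-1}(U_i)$, and then to read off $P$ and $Q$ on each $U_i$ by specializing Lawton's universal expressions.

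First I would establish that $W=\pi^{-1}(U_0\cup U_1\cup U_2)$, and hence $W_i=\pi^{-1}(U_i)$. By Lemma~\ref{lem:projection} the image of $W$ under the eight-coordinate projection $\pi$ is exactly $U_0\cup U_1\cup U_2$. The only remaining coordinate is $\eta=\chi([a,b])$, and here the key observation is that $[a,b]$ is fixed by the monodromy: using $tat^{-1}=ab$ and $tbt^{-1}=bab$ one computes $t[a,b]t^{-1}=[a,b]$ directly. Consequently $\eta$ is automatically monodromy-invariant, so no constraint beyond \eqref{eqn:polcommutator} is imposed, and both roots of that quadratic yield monodromy-fixed characters. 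Thus a character lies in $W$ iff its projection lies in some $U_i$, giving $W_i=\pi^{-1}(U_i)$, the two-fold covering of $U_i$ branched along $\{P^2-4Q=0\}$. Since each $U_i$ is an irreducible surface and the squarefree part of $P^2-4Q$ is nonconstant on each of them (as recorded in the discriminant factorizations in the statement), each $W_i$ is irreducible, and these are the three components of $W$.

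It then remains to compute $P$ and $Q$ on each component. I would start from Lawton's universal polynomials $P,Q$ in the eight traces \cite{Lawton0}, substitute the conjugation identities $\chi(ab)=\chi(a)=\alpha$, $\chi(b^{-1}a^{-1})=\chi(a^{-1})=\bar\alpha$, $\chi(a^{-1}b)=\chi(b)=\beta$, $\chi(ab^{-1})=\chi(b^{-1})=\bar\beta$ used in Lemma~\ref{lem:projection} to reduce from eight to four coordinates, thereby obtaining $P,Q\in\ZZ[\alpha,\bar\alpha,\beta,\bar\beta]$. Specializing the defining equations of each $U_i$ --- setting $\bar\alpha=\alpha,\ \bar\beta=\beta$ for $W_0$; $\alpha=\bar\alpha=1$ for $W_1$; and $\beta=\bar\beta=1$ for $W_2$ --- and simplifying should produce the stated $P$ and $Q$, after which I would form and factor $P^2-4Q$. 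Note that $W_2$ is obtained from $W_1$ by the substitution $\beta\leftrightarrow\alpha$, $\bar\beta\leftrightarrow\bar\alpha$ (reflecting the $a\leftrightarrow b$ symmetry exchanging $U_1$ and $U_2$), so only one of these two cases needs to be treated independently.

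The main obstacle is purely computational: Lawton's expressions for $P$ and $Q$ are lengthy, and the factorizations of the discriminants --- especially the identity $P^2-4Q=(\alpha^2\beta^2-6\alpha\beta-4\alpha-4\beta-3)(\alpha\beta-2\alpha-2\beta+3)^2$ on $W_0$ --- are not transparent by hand. I would carry out the substitution, simplification, and factorization with a computer algebra system (Sage or Mathematica, as elsewhere in the paper). The perfect-square factor $(\alpha\beta-2\alpha-2\beta+3)^2$ on $W_0$ is worth isolating, as its vanishing locus is where the branched cover meets the reducible characters, and it reappears when the distinguished and non-distinguished components are separated in the later sections.
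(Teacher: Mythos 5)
Your proposal is correct and takes essentially the same approach as the paper: there, Lemma~\ref{lem:Wj} is derived in one stroke from Lemma~\ref{lem:projection} (whose proof already records that $t[a,b]t^{-1}=[a,b]$, so $\eta$ is automatically monodromy-invariant and constrained only by \eqref{eqn:polcommutator}) together with substitution of the defining equations of each $U_i$ into Lawton's universal polynomials $P$ and $Q$, with the discriminant factorizations carried out by computer algebra. Your extra remarks (irreducibility of each double cover via the non-square discriminant, and the $a\leftrightarrow b$ symmetry exchanging $W_1$ and $W_2$) merely make explicit what the paper leaves implicit.
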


We describe the image of the set of reducible characters:

\begin{lemma}
\label{lemma:intersect}
The image $\res (X_{red}(\Gamma,G))\subset W$ is the curve 
 $$
 \{(\alpha,\beta,\eta)\in W_0\mid \alpha\beta -2 \alpha-2 \beta+3=0\}.
 $$
All characters in this curve are of type $(2,1)$, except at the point $\alpha=\beta=3$, that corresponds to totally reducible representations. 

In particular $W_1$ and $W_2$ only meet restrictions of reducible representations at their common intersection,
which is the intersection with $W_0$, the point $\alpha=\bar \alpha=\beta=\bar\beta=1$, $\eta=-1$.
\end{lemma}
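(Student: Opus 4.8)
The plan is to read off the image from the explicit parametrizations of the reducible locus in Proposition~\ref{prop:SL3-reducibles} and to use the factorization of the discriminant recorded for $W_0$ in Lemma~\ref{lem:Wj} in order to pin down the extra coordinate $\eta$ for free.

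First I would check the inclusion $\res(X_{red}(\Gamma,G))\subseteq C$, where $C=\{(\alpha,\beta,\eta)\in W_0\mid \alpha\beta-2\alpha-2\beta+3=0\}$. A totally reducible character is a sum of one-dimensional representations, each factoring through $\varphi\co\Gamma\to\ZZ$; since $F_2=\ker\varphi$, its restriction to $F_2$ is the trivial three-dimensional representation, so $\alpha=\bar\alpha=\beta=\bar\beta=3$ and $\alpha\beta-2\alpha-2\beta+3=0$ holds. A partially reducible character has the form $\rho=(\phi\otimes\varrho)\oplus\phi^{-2}$ with $\phi\co\Gamma\to\CC^*$ and $\varrho\co\Gamma\to\SL(2,\CC)$; as $\phi$ factors through $\varphi$, one has $\rho|_{F_2}=\varrho|_{F_2}\oplus 1$, whence $\alpha=\bar\alpha=x_1+1$ and $\beta=\bar\beta=x_2+1$ in terms of the Fricke coordinates $x_1=\chi_\varrho(a)$, $x_2=\chi_\varrho(b)$. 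In both cases $\alpha=\bar\alpha$ and $\beta=\bar\beta$, so the restriction lands in $W_0$, and the $\SL(2,\CC)$-relation $(x_1-1)(x_2-1)=1$ established in Section~\ref{section:SL2} becomes exactly $(\alpha-2)(\beta-2)=1$, i.e.\ $\alpha\beta-2\alpha-2\beta+3=0$.

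The decisive observation is that this relation already determines $\eta$. By Lemma~\ref{lem:Wj} the discriminant on $W_0$ is $P^2-4Q=(\alpha^2\beta^2-6\alpha\beta-4\alpha-4\beta-3)(\alpha\beta-2\alpha-2\beta+3)^2$, which vanishes along $\alpha\beta-2\alpha-2\beta+3=0$. Hence over this conic the two-fold cover $W_0\to U_0$ is ramified, $\eta=P/2$ is forced, and no separate computation of $\chi([a,b])$ is needed. This both yields the inclusion into $C$ and shows that $C$ maps isomorphically to the smooth conic $(\alpha-2)(\beta-2)=1$ in $U_0$, which is isomorphic to $\CC^*$. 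For the reverse inclusion I would realize every point: given $(\alpha,\beta)$ on the conic, set $x_1=\alpha-1\neq 1$ and invoke Proposition~\ref{prop:SL2} to obtain an irreducible $\SL(2,\CC)$-character of $\Gamma$ with $\chi(a)=x_1$; the associated partially reducible $\SL(3,\CC)$-character of Proposition~\ref{prop:SL3-reducibles} restricts to the prescribed point, so $\res(X_{red})=C$. Such a representation is of type $(2,1)$ exactly when $\varrho$ is irreducible, which fails only at the reducible $\SL(2,\CC)$-point $x_1=x_2=2$, i.e.\ $\alpha=\beta=3$, where the character is totally reducible.

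For the final assertion I intersect $C$ with $W_1$ and $W_2$. A point of $C$ already satisfies $\alpha=\bar\alpha$ and $\beta=\bar\beta$; to lie on $W_1$ it must project into $U_1=\{\alpha=\bar\alpha=1\}$, forcing $\alpha=1$, and then $(\alpha-2)(\beta-2)=1$ gives $\beta=1$. Evaluating $\eta=P/2$ at $\alpha=\beta=1$ gives $\eta=-1$, and the $W_1$-polynomials of Lemma~\ref{lem:Wj} confirm that $(1,1,1,1,-1)$ lies on $W_1$ (there $P^2-4Q=0$ and $P/2=-1$). The symmetric argument exchanging $\alpha$ and $\beta$ treats $W_2$, so the reducible restrictions meet $W_1$ and $W_2$ only in the common triple point $(1,1,1,1,-1)=W_0\cap W_1\cap W_2$. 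The one step demanding care is the realization in the third paragraph: one must ensure that each point of the conic comes from a genuine reducible representation of $\Gamma$ rather than merely of the free group $F_2$, and this is exactly what the parametrization of $X_\mathrm{PR}$ in Proposition~\ref{prop:SL3-reducibles} provides.
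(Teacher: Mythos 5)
Your proof is correct and follows essentially the same route as the paper, whose two-line argument likewise rests on the identities $\alpha=\bar\alpha=x_1+1$, $\beta=\bar\beta=x_2+1$ together with the image curve $x_1x_2=x_1+x_2$ from Section~\ref{section:SL2}. The extra details you supply---that the factor $(\alpha\beta-2\alpha-2\beta+3)^2$ in the discriminant of Lemma~\ref{lem:Wj} forces $\eta=P/2$ over the conic, and the explicit realization of every point of the conic via Propositions~\ref{prop:SL2} and~\ref{prop:SL3-reducibles}---are sound elaborations of steps the paper leaves implicit.
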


\begin{proof}
 We know from Section \ref{sec:reducible} that if $x_1$ and $x_2$ denote the traces 
 in $\SL(2,\CC)$ of the images of $a$ and $b$, then the 
 image of  
 $X(\Gamma, \SL(2,\CC))$ in $X(F_2, \SL(2,\CC))$ is given by the equation
 $x_1 x_2-x_1-x_2=0$. Now the lemma follows from the identities
 $\alpha=\bar\alpha=x_1+1$, $\beta=\bar\beta=x_2+1$.
\end{proof}

\begin{remark}
\label{rem:disc}
 The discriminant locus of $W_0$ contains $\alpha \beta -2 \alpha-2 \beta+3=0$.
It contains a  second component:
 $$
  \alpha^2 \beta^2-6  \alpha\beta-4 \alpha-4 \beta-3.
 $$
 This is the set of characters of the representations obtained as  composition of representations of $\Gamma$ in $\SL(2,\CC)$
 with $\Sym^2\co \SL(2,\CC)\to \SL(3,\CC)$. This may be seen with the same argument as in 
 Lemma~\ref{lemma:intersect}, but using that
 $\alpha=\bar\alpha= x_1^2-1$ and $\beta=\bar\beta=x_2^2-1$. 
%Note that this curve is rational, being the image of a rational curve.
\end{remark}

Corollary~\ref{cor:fibre} yields:

\begin{corollary}
 \label{cor:cardinality_fibre}
 The fiber of 
 $$
 \res: X_{red}(\Gamma,\SL(3,\CC))\cap \overline{ X_{irr}(\Gamma,\SL(3,\CC))}\to \{ (\alpha,\beta,\eta)\in W_0\mid \alpha\beta -2 \alpha-2 \beta+3=0\}
 $$
 has cardinality $6$, except for 
 $(\alpha,\beta)= (1,1)$, $(3,3)$, or $\left(\frac{1\pm\sqrt{5}}{2}, \frac{1\mp\sqrt{5}}{2}\right)$,
 where it has cardinality $3$.
\end{corollary}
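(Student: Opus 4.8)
The plan is to derive Corollary~\ref{cor:cardinality_fibre} as an immediate translation of Corollary~\ref{cor:fibre}, which already counts the fibers of the projection to the $(\alpha,\beta)$-plane. The key observation is that the map in the statement factors through the projection studied in Corollary~\ref{cor:fibre}. Indeed, by Lemma~\ref{lemma:intersect} the image $\res(X_{red}(\Gamma,G))$ lies inside $W_0$, where $\alpha=\bar\alpha$ and $\beta=\bar\beta$, so the pair $(\alpha,\beta)$ determines $(\alpha,\bar\alpha,\beta,\bar\beta)$ completely on the relevant curve. Moreover, on that curve one has the relation $\alpha\beta-2\alpha-2\beta+3=0$, which is exactly $(\alpha-2)(\beta-2)=1$, so the base curve $\{(\alpha,\beta,\eta)\in W_0 \mid \alpha\beta-2\alpha-2\beta+3=0\}$ is identified with $\{(\alpha,\beta)\mid (\alpha-2)(\beta-2)=1\}\cong\CC-\{2\}$ appearing in Corollary~\ref{cor:fibre}.

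First I would note that the value of $\eta=\chi([a,b])$ is determined once $(\alpha,\beta)$ is fixed on the reducible curve: by Lemma~\ref{lemma:intersect} all these characters are of type $(2,1)$ (except the totally reducible point $\alpha=\beta=3$), and for a partially reducible representation $\rho=(\phi\otimes\varrho)\oplus\phi^{-2}$ the commutator $[a,b]$ lies in $F_2$, where $\phi$ is trivial, so $\eta$ equals the corresponding $\SL(2,\CC)$-quantity and is a function of $(\alpha,\beta)$ alone. Hence adjoining the coordinate $\eta$ does not change the cardinality of the fiber, and counting points over $(\alpha,\beta,\eta)$ is the same as counting points over $(\alpha,\beta)$.

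Next I would invoke Corollary~\ref{cor:fibre} verbatim: the fiber of the projection $X_{red}\cap\overline{X_{irr}}\to\{(\alpha,\beta)\mid (\alpha-2)(\beta-2)=1\}$ has six points, except over $(\alpha,\beta)=(1,1)$, $(3,3)$, or $\left(\frac{1\pm\sqrt5}{2},\frac{1\mp\sqrt5}{2}\right)$, where it has three points. Translating the base back to the curve inside $W_0$ via the identification above yields precisely the claimed statement. The parametrization of the fiber by the six values of $w$ solving Equation~\eqref{eq:fibre}, together with the discriminant computation $4(\alpha^2-\alpha-1)(\alpha-1)^2$ already recorded in the proof of Corollary~\ref{cor:fibre}, accounts for the drop to three points at exactly the listed values of $\alpha$.

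The only genuine point requiring care is the compatibility of the two descriptions of the fiber: the map of Corollary~\ref{cor:fibre} records a parameter $w$ with $w=\phi(t)^2$, whereas the map here lands in Lawton's $W_0$ with commutator coordinate $\eta$. I would therefore make explicit that the assignment $(\alpha,\beta,w)\mapsto(\alpha,\beta,\eta)$ is well-defined and injective on fibers, so that no points are merged or split in passing from one target to the other; this is where the argument of Lemma~\ref{lemma:intersect} that $\eta$ depends only on $(\alpha,\beta)$ is essential. Once this identification is in place, the corollary follows with no further computation, and the main obstacle is purely bookkeeping rather than any new estimate.
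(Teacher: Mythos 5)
Your proposal is correct and takes essentially the same route as the paper: there, Corollary~\ref{cor:cardinality_fibre} is stated as an immediate consequence of Corollary~\ref{cor:fibre} (``Corollary~\ref{cor:fibre} yields''), with the identification of the two base curves left implicit. Your explicit check that $\eta$ is determined by $(\alpha,\beta)$ on the reducible curve is exactly the suppressed bookkeeping; note it also follows directly from the factor $(\alpha\beta-2\alpha-2\beta+3)^2$ in the discriminant $P^2-4Q$ of Lemma~\ref{lem:Wj}, which shows the double cover $W_0\to U_0$ is ramified along that curve.
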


\begin{remark}
%  For  $\alpha=\bar \alpha=\beta=\bar\beta=1$, we have a curve of partially reducible representations, 
% with equations $y^3 - y^2 \bar{y}^2 + \bar{y}^3=0$. 
% Equivalently, $w=v^2$, which gives the parametrization
% $y = v + 1/v^2, \bar{y} = v^2 + 1/v$, $v \in \CC^*$ ($v=\xi=\phi(t)$ in the proof of Lemma~\ref{lemm:redtoirreduc}).
%
The intersection of the two components of the discriminant (see Remark~\ref{rem:disc}) is the pair of points
 $(\alpha,\beta)= \{(3,3), \left(\frac{1\pm\sqrt{5}}{2}, \frac{1\mp\sqrt{5}}{2}\right)\}$. 
 The third point $(\alpha,\beta)= (1,1)$ in Corollary~\ref{cor:cardinality_fibre} yields singular points in $V_0$.
\end{remark}

A character of $\Gamma$ that is irreducible may have a restriction to $F_2$ that is reducible.

\begin{lemma}
 \label{lem:redired}
The characters of $\res(X_{irr}(\Gamma, G))$  that are $F_2$-reducible are 
characters of representations $\rho$ whose
restriction to $F_2$ is totally reducible, and $\rho(t)$ acts as a cyclic 
permutation of the invariant subspaces of 
$\rho(F_2)$. In particular $\rho$ is metabelian and  $\tr(\rho(t^{\pm 1}))=0$.
 \end{lemma}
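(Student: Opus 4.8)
The plan is to apply Clifford's theorem to the normal subgroup $F_2=\Gamma'=\ker\varphi\triangleleft\Gamma$ and let the primality of $3=\dim\CC^3$ do the rest. Since the image $\res(X_{irr}(\Gamma,G))$ is independent of $G$, and $\PGL$-representations lift to $\SL(3,\CC)$ by Lemma~\ref{lem:lift}, I would first reduce to the case $G=\SL(3,\CC)$. Every character in $X_{irr}$ is the character of a semisimple (here even irreducible) representation $\rho\co\Gamma\to\SL(3,\CC)$, unique up to conjugation \cite{LM}; this is exactly what makes ``the restricted character is $F_2$-reducible'' mean ``$\rho|_{F_2}$ is reducible,'' so it suffices to analyse one such $\rho$ with $\rho|_{F_2}$ reducible.

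First I would invoke Clifford's theorem: because $F_2$ is normal and $\rho$ is $\Gamma$-irreducible, $\rho|_{F_2}$ is semisimple and $\langle t\rangle=\Gamma/F_2$ permutes its isotypic components transitively. (If one prefers a hands-on version, the same fact follows by taking a minimal nonzero $\rho(F_2)$-invariant subspace $V$ and using normality to see that each $\rho(t^k)V$ is again $\rho(F_2)$-invariant, with $\sum_k\rho(t^k)V=\CC^3$ by irreducibility.) In particular all isotypic components share a common dimension $d$ and multiplicity $m$, and $c\cdot d\cdot m=3$, where $c$ is their number.

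Next I would let arithmetic finish the classification. As $c\mid 3$, either $c=1$ or $c=3$. The case $c=1$ splits into $\rho|_{F_2}$ irreducible (excluded by hypothesis) or $\rho(F_2)$ scalar, in which case any eigenline of $\rho(t)$ is $\rho(\Gamma)$-invariant, contradicting irreducibility. Hence $c=3$, which forces $d=m=1$: so $\rho|_{F_2}=\psi_1\oplus\psi_2\oplus\psi_3$ is totally reducible into three \emph{distinct} characters of $F_2$, with invariant lines $L_1,L_2,L_3$ permuted transitively by $\langle t\rangle$. Since a cyclic group acting transitively on a $3$-element set acts as a $3$-cycle, $\rho(t)$ cyclically permutes $L_1,L_2,L_3$. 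To conclude the two final assertions I would pass to a basis adapted to $L_1\oplus L_2\oplus L_3$: there every $\rho(g)$ with $g\in F_2$ is diagonal, so $\rho(\Gamma)'=\rho(F_2)$ is abelian and $\rho(\Gamma)$ is metabelian; and $\rho(t)$ is a monomial matrix realizing the fixed-point-free $3$-cycle, hence has zero diagonal, giving $\tr\rho(t)=0$, and likewise $\tr\rho(t^{-1})=0$ for the inverse $3$-cycle.

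The one genuinely delicate point — the step I would be most careful to state cleanly — is the exclusion of the intermediate patterns $3=2+1$ (a two-dimensional irreducible block plus a line) and $3=1\cdot 3$ (scalar action): neither is ruled out by an ad hoc computation but by combining Clifford transitivity (which forces equal dimensions of the isotypic components) with the primality of $3$ and the standing $\Gamma$-irreducibility of $\rho$. I would therefore make sure the reduction to $\SL(3,\CC)$ and the uniqueness of the semisimple representative are invoked up front, since those are precisely what let Clifford's theorem bite.
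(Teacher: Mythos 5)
Your proposal is correct and takes essentially the same route as the paper: the paper's one-paragraph proof is exactly the Clifford mechanism you spell out — normality of $F_2$ makes $\rho(t)$ permute the invariant (isotypic) pieces of $\rho\vert_{F_2}$, unequal dimensions would force each piece to be separately $\rho(t)$-invariant and contradict irreducibility, so the restriction is totally reducible and $\rho(t)$ cycles the three lines, giving the monomial form and $\tr\rho(t^{\pm1})=0$. Your write-up is in fact slightly more complete than the paper's, which leaves implicit both the semisimplicity of $\rho\vert_{F_2}$ and the exclusion of the scalar case $(d,m)=(1,3)$; you handle both explicitly.
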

 
 \begin{proof}
 Let $\rho\in R(\Gamma,G)$ be an irreducible representation whose restriction
 to $F_2$ is reducible.
If $\rho\vert_{F_2}$ was partially reducible then, as the invariant subspaces
would have different dimension, they should also be invariant by $\rho(t)$, and therefore
$\rho$ would be reducible. 
Thus the only possibility is that  $\rho\vert_{F_2}$ is totally reducible and $\rho(t)$ cyclically permutes its three invariant subspaces.
\end{proof}

Irreducible metabelian characters
in $\SL(3,\CC)$
have been 
studied by Boden and Friedl in   \cite{BF2, BF3},
who prove that in 
$X(\Gamma,\SL(3,\CC))$ there are five of them:

\begin{corollary}
 \label{cor:redired}
 There are five characters in $\res(X_{irr}(\Gamma, G))$ that are $F_2$-reducible:
 one in
$W_0$, $\alpha=\beta=-1$; two in $W_1$, $(\beta,\bar\beta)=(-1\pm 2 i,-1\mp 2 i)$; and
two in $W_2$, $(\alpha,\bar\alpha)=(-1\pm 2 i,-1\mp 2 i)$.  In all cases $\eta=3$.
  \end{corollary}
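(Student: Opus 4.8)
The plan is to combine Lemma~\ref{lem:redired} with an explicit analysis of the monodromy action on characters of $F_2$. By that lemma, any $F_2$-reducible character in $\res(X_{irr}(\Gamma,G))$ is the character of an irreducible $\rho\co\Gamma\to\SL(3,\CC)$ whose restriction to $F_2$ is totally reducible and for which $\rho(t)$ cyclically permutes the three eigenlines of $\rho(F_2)$. So first I would pick a basis in which $\rho(a)=\diag(\lambda_1,\lambda_2,\lambda_3)$ and $\rho(b)=\diag(\mu_1,\mu_2,\mu_3)$, with $\lambda_1\lambda_2\lambda_3=\mu_1\mu_2\mu_3=1$, and $\rho(t)$ a monomial matrix realizing the $3$-cycle $e_1\mapsto e_2\mapsto e_3\mapsto e_1$; then conjugation by $\rho(t)$ cyclically shifts the diagonal entries. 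Feeding this into the defining relations $tat^{-1}=ab$ and $tbt^{-1}=bab$ of Presentation~\eqref{eq:pres-fiber} turns them into the scalar equations
\[
\lambda_3=\lambda_1\mu_1,\ \lambda_1=\lambda_2\mu_2,\ \lambda_2=\lambda_3\mu_3
\quad\text{and}\quad
\mu_3=\lambda_1\mu_1^2,\ \mu_1=\lambda_2\mu_2^2,\ \mu_2=\lambda_3\mu_3^2 .
\]
Substituting $\mu_i=\lambda_{i-1}/\lambda_i$ from the first group into the second, and using $\lambda_1\lambda_2\lambda_3=1$, forces $\lambda_i^4=1$ for each $i$; hence each eigenline carries a character of $F_2$ valued in the $4$-th roots of unity.

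Second, I would reinterpret and count these through the monodromy. The three characters $\chi_1,\chi_2,\chi_3\co F_2\to\CC^*$ form a single orbit under the monodromy action $(\lambda,\mu)\mapsto(\lambda\mu,\lambda\mu^2)$ on $\Hom(F_2,\CC^*)=(\CC^*)^2$, which on exponents is the matrix $M=\left(\begin{smallmatrix}1&1\\1&2\end{smallmatrix}\right)$ (the homological monodromy of the figure eight, with Alexander polynomial $x^2-3x+1$). A character extends to an irreducible $\rho$ exactly when its $\Phi$-orbit has size $3$, i.e.\ it has period exactly $3$. The number of characters fixed by the $k$-th iterate equals $|\det(M^k-I)|$, and here $|\det(M-I)|=1$ while $|\det(M^3-I)|=16$; since $3$ is prime there are $16-1=15$ characters of exact period $3$, hence $15/3=5$ orbits. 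Each orbit determines a single irreducible character of $\Gamma$: the remaining scaling freedom in $\rho(t)$ (subject to $\det\rho(t)=1$) is absorbed by conjugation by diagonal matrices, and distinct orbits give distinct $(\alpha,\bar\alpha,\beta,\bar\beta)$, hence distinct characters. Together with Lemma~\ref{lem:redired} this already yields the number five.

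Finally I would compute the coordinates. The $16$ period-dividing-$3$ characters are exactly the points $(i^{p},i^{q})$, $(p,q)\in(\ZZ/4)^2$ (indeed $M^3-I\equiv0\pmod 4$), and I would list the five nontrivial $M$-orbits and evaluate $\alpha=\sum\lambda_j$, $\bar\alpha=\sum\lambda_j^{-1}$, $\beta=\sum\mu_j$, $\bar\beta=\sum\mu_j^{-1}$. This gives one orbit with $\alpha=\bar\alpha=\beta=\bar\beta=-1$ (lying in $W_0$), two orbits with $\alpha=\bar\alpha=1$ and $(\beta,\bar\beta)=(-1\pm2i,-1\mp2i)$ (lying in $W_1$), and two orbits with $\beta=\bar\beta=1$ and $(\alpha,\bar\alpha)=(-1\pm2i,-1\mp2i)$ (lying in $W_2$), matching the statement. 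The value $\eta=\chi([a,b])=3$ is immediate in every case, since $\rho(a)$ and $\rho(b)$ are diagonal and therefore commute, so $\rho([a,b])=\Id$. The genuinely delicate steps are translating the group relations into the eigenvalue equations and justifying the ``period exactly $3$'' criterion for irreducibility together with the well-definedness and pairwise distinctness of the five induced characters; the coordinate evaluations and the identification $\eta=3$ are then routine.
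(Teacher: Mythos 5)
Your argument is correct, but it takes a genuinely different route from the paper. The paper deduces the corollary from Lemma~\ref{lem:redired} together with an external citation: Boden and Friedl \cite{BF2,BF3} classified irreducible metabelian $\SL(3,\CC)$-characters of knot groups and proved that the figure eight knot has exactly five, so no computation is carried out in the text. You instead reprove that count from scratch: after Lemma~\ref{lem:redired} reduces the problem to representations with $\rho\vert_{F_2}$ totally reducible and $\rho(t)$ cyclically permuting the three eigenlines, you identify such representations with the size-$3$ orbits of the monodromy $\phi^*$ on $\Hom(F_2,\CC^*)\cong(\CC^*)^2$ and count them as $\bigl(|\det(M^3-I)|-|\det(M-I)|\bigr)/3=(16-1)/3=5$ for $M=\bigl(\begin{smallmatrix}1&1\\1&2\end{smallmatrix}\bigr)$. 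I verified the details: the relations $tat^{-1}=ab$ and $tbt^{-1}=bab$ do reduce to your scalar equations and, with $\lambda_1\lambda_2\lambda_3=1$, force $\lambda_i^4=1$; since $M^3-I=\bigl(\begin{smallmatrix}4&8\\8&12\end{smallmatrix}\bigr)$, the characters of period dividing $3$ are exactly the $(i^p,i^q)$; the five orbits of $M$ on $(\ZZ/4)^2\setminus\{0\}$, namely $\{(2,0),(2,2),(0,2)\}$, $\{(1,0),(1,1),(2,3)\}$, $\{(3,0),(3,3),(2,1)\}$, $\{(0,1),(1,2),(3,1)\}$ and $\{(0,3),(3,2),(1,3)\}$, yield precisely the trace tuples $(-1,-1,-1,-1)$, $(-1\pm2i,-1\mp2i,1,1)$ and $(1,1,-1\pm2i,-1\mp2i)$ of the statement; and your uniqueness step is sound, because any two monomial choices of $\rho(t)$ with determinant $1$ are conjugate by a diagonal matrix centralizing $\rho(F_2)$ (this is exactly the mechanism by which these five characters become ramification points of $\res$, as the paper observes right after the corollary). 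As for what each approach buys: the paper's citation is short and, more importantly, imports from \cite{BF3} the smoothness of $X(\Gamma,\SL(3,\CC))$ at these five characters, which is used later in Proposition~\ref{prop:smooth} and which your argument does not address (nor does the corollary claim it); your computation is elementary, self-contained, produces the representations explicitly, and in effect recomputes the group $H_1$ of the $3$-fold cyclic branched cover, $\ZZ/4\oplus\ZZ/4$ of order $|\det(M^3-I)|=16$, which is what underlies the Boden--Friedl count.
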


Those yield precisely five characters in $X(\Gamma, G)$ for $G=\SL(3,\CC)$ and   $G=\PSL(3,\CC)$ 
(they satisfy $y=\bar y=z=\bar z=0$) and five subvarieties for $G= \GL(3,\CC)$. 
To study further the fibre of $\res\co X_{irr}(\Gamma,\SL(3,\CC))\to W$, notice that the fibre
of $F_2$-irreducible characters in $W$ has precisely three points.
Namely, if $\rho\in R(\Gamma, \SL(3,\CC))$
satisfies that $\rho\vert_{ F_2}$ is irreducible, the relations in 
\eqref{eq:pres-fiber} yield that $\rho(t)$ is unique up to the action of $\mu_3$, 
the center of $\SL(3,\CC)$. 
It is straightforward to check that 
those three choices yield the same character if and only if
$\rho$ is a metabelian irreducible representation as in Lemma~\ref{lem:redired}.
Thus the five characters in Corollary~\ref{cor:redired} are the ramification points of 
$\res\co X_{irr}(\Gamma,\SL(3,\CC))\to W$.
Notice that those five characters also lie in the
zero locus of the discriminant $P^2-4 Q=0$, as they satisfy $\eta=3$.
In \cite{BF3} Boden and Friedl prove that these representations are smooth points of $X(\Gamma,\SL(3,\CC))$, 
so  $\overline{X_{irr}(\Gamma,\SL(3,\CC))}$ has the same number of components as $W$. 
Summarizing, we have:

\begin{proposition}
 \label{prop:3components}
The set $\overline{X_{irr}(\Gamma,\SL(3,\CC))}$ has  three components $V_0$, $V_1$ and $V_2$, that are respective
3 to 1 branched covers
 of $W_0$, $W_1$ and $W_2$. 
 
 The branching points in  $X_{irr}(\Gamma,\SL(3,\CC))$ are the five metabelian irreducible characters in Corollary~\ref{cor:redired}.
\end{proposition}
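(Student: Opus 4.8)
The plan is to study the restriction map $\res\co X_{irr}(\Gamma,\SL(3,\CC))\to W$ and to realize each $V_i$ as a connected $3$-to-$1$ branched cover of $W_i$. First I would determine the generic fibre. If $\rho$ is a representation with $\rho|_{F_2}$ irreducible, then the relations of Presentation~\eqref{eq:pres-fiber} require $\rho(t)$ to conjugate $\rho(a),\rho(b)$ into $\rho(ab),\rho(bab)$ respectively; since $\rho(F_2)=\langle\rho(a),\rho(b)\rangle$ acts irreducibly, Schur's lemma shows that such a conjugating matrix is unique up to a scalar, and the condition $\det\rho(t)=1$ forces the scalar into $\mu_3$. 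Hence over the $F_2$-irreducible locus of each $W_i$ the fibre of $\res$ consists exactly of the three translates $\{\varpi^k\rho(t)\mid k=0,1,2\}$, so $\res$ is there an \'etale $3$-to-$1$ cover.

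Next I would locate the ramification. The three lifts $\varpi^k\rho(t)$ yield the same character precisely when $\rho$ is fixed by the $\mu_3$-action, which by Lemma~\ref{lem:redired} happens exactly for the metabelian irreducible representations in which $\rho(t)$ cyclically permutes the three $\rho(F_2)$-invariant lines. By Corollary~\ref{cor:redired} these are the five characters with $\eta=3$, distributed as one in $W_0$ and two each in $W_1$ and $W_2$. At each such character the three sheets of the cover collapse to a single point, so the ramification is total and the local monodromy around the corresponding branch point is a $3$-cycle. This already identifies the five metabelian characters as the branching points in $X_{irr}$.

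The main obstacle is to exclude the possibility that $\res^{-1}(W_i)$ splits into several components of smaller degree (for instance $1+2$), which would spoil both the degree and the component count. I would resolve this in two compatible ways. A $3$-cycle in the monodromy acts transitively on the three sheets, so the restriction of the cover to $W_i$ minus its branch points is connected; equivalently, any splitting into pieces of degrees summing to $3$ would force two distinct branches to pass through the single collapsed fibre lying over a metabelian character, making that character a singular point of $X(\Gamma,\SL(3,\CC))$. Since Boden--Friedl \cite{BF3} prove that the five metabelian characters are smooth points, no such splitting can occur: the three sheets glue into a single smooth branch at each ramification point, so each $V_i:=\overline{\res^{-1}(W_i)}$ is irreducible and maps $3$-to-$1$ onto $W_i$ with the asserted branch points. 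As $W$ has exactly the three components $W_0,W_1,W_2$ by Lemma~\ref{lem:Wj}, we conclude that $\overline{X_{irr}(\Gamma,\SL(3,\CC))}$ has exactly three components $V_0,V_1,V_2$.
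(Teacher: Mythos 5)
Your proposal is correct and follows essentially the same route as the paper: uniqueness of $\rho(t)$ up to $\mu_3$ via Schur's lemma (giving the generic $3$-to-$1$ fibre), collapse of the fibre exactly at the five metabelian characters of Corollary~\ref{cor:redired}, and the Boden--Friedl smoothness result \cite{BF3} to conclude that $\overline{X_{irr}(\Gamma,\SL(3,\CC))}$ has the same components as $W$ --- indeed you make explicit the splitting-exclusion step that the paper leaves implicit. One small caveat: your first ``way'' overreaches, since fibre collapse alone does not force the local monodromy to be a $3$-cycle (three disjoint sheets could simply cross at the point, which is exactly the scenario smoothness rules out), so the monodromy claim is a consequence of, not an alternative to, your second argument via \cite{BF3}.
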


If we add the variables
 $$
 y(\chi)= \chi(t), \quad \bar{y}(\chi)=\chi(t^{-1}),
 $$
then the variables $(\alpha,\bar\alpha,\beta,\bar\beta, \eta, y,\bar{y})$ 
describe $X_{irr}(\Gamma,\SL(3, \CC))$, and the map 
 $$
(\alpha,\bar\alpha,\beta,\bar\beta, \eta, y,\bar{y}) \mapsto (\alpha,\bar\alpha,\beta,\bar\beta, \eta)
 $$ 
is three-to-one except at $(y,\bar{y})=(0,0)$. 
Actually, it is the quotient by 
the action of the center $(y,\bar{y})\mapsto (\varpi y,\varpi^2 \bar{y})$, 
where $\varpi\in \mu_3$.
Notice that in Lemma~\ref{lemm:redtoirreduc} we have shown that reducible characters 
in $\overline{X_{irr}(\Gamma,\SL(3,\CC))}$  
are also determined by the values of $y$ and $\bar{y}$. Thus, using Proposition~\ref{prop:parametersLawton}, we get:

\begin{proposition}\label{prop:parameters}
The parameters $(\alpha,\bar\alpha,\beta,\bar\beta, \eta, y,\bar{y})$ 
describe the set $X_{irr}(\Gamma,\SL(3, \CC))$
pointwise (that is, they describe a variety homeomorphic to 
$X_{irr}(\Gamma,\SL(3, \CC))$ in the classical topology). 
The parameters  $(y,\bar y, z,\bar z,\alpha,\bar\alpha,\beta,\bar\beta)$, with
$z=\chi(t a^{-1} t a)$ and $\bar z=\chi(a^{-1} t^{-1} a t^{-1})$, describe 
$X_{irr}(\Gamma,\SL(3, \CC))$ scheme-theoretically.
\end{proposition}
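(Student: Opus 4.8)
The plan is to separate the two assertions: the scheme-theoretic statement will be a direct consequence of the embedding already established, while the pointwise statement requires a short fibre analysis together with a topological properness argument.

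For the scheme-theoretic claim I would argue as follows. By Proposition~\ref{prop:parametersLawton} the eight functions $(y,\bar y,z,\bar z,\alpha,\bar\alpha,\beta,\bar\beta)$ realise the whole character variety $X(\Gamma,\SL(3,\CC))$ as a closed subscheme of $\CC^8$; restricting this closed embedding to the Zariski-open locus $X_{irr}(\Gamma,\SL(3,\CC))$ immediately yields the asserted scheme-theoretic description. The only point to record is the identification of $z$ and $\bar z$ with $\chi(ta^{-1}ta)$ and $\chi(a^{-1}t^{-1}at^{-1})$ under Presentation~\eqref{eq:pres-fiber}, which is exactly Equation~\eqref{eq:parametersfib}. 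No further computation is needed here.

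For the pointwise claim, write $\Phi$ for the map $\chi\mapsto(\alpha,\bar\alpha,\beta,\bar\beta,\eta,y,\bar y)(\chi)$, whose first five components are the restriction $\res$ to the coordinates of $W$. Injectivity of $\Phi$ is essentially already in hand: by Proposition~\ref{prop:3components} the map $\res\co X_{irr}(\Gamma,\SL(3,\CC))\to W$ is three-to-one, its fibre over a character with irreducible $F_2$-restriction being a single free orbit of the centre $\mu_3$; since $\mu_3$ fixes $(\alpha,\bar\alpha,\beta,\bar\beta,\eta)$ and acts by $(y,\bar y)\mapsto(\varpi y,\varpi^2\bar y)$, the pair $(y,\bar y)$ separates the three points of such a fibre. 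The only fibres on which this orbit degenerates are the five metabelian irreducible characters of Corollary~\ref{cor:redired}; there $y=\bar y=0$ and the characters are told apart by their distinct images in $W$. Hence $\Phi$ is injective on $X_{irr}(\Gamma,\SL(3,\CC))$.

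It then remains to upgrade this continuous bijection onto its image to a homeomorphism, and this is the step I expect to be the main obstacle. I would carry it out on the closure $\overline{X_{irr}(\Gamma,\SL(3,\CC))}$, where $\res$ is a finite morphism by Proposition~\ref{prop:3components} and hence proper; adjoining the regular functions $y,\bar y$ gives a proper morphism into $W\x\CC^2$, properness being inherited through the separated projection $W\x\CC^2\to W$. Injectivity on the closure follows as above together with Lemma~\ref{lemm:redtoirreduc}, which pins down the reducible boundary characters by $(y,\bar y)$. A proper continuous injection of complex varieties is a closed map, hence a homeomorphism onto its image in the classical topology; restricting to the open locus $X_{irr}(\Gamma,\SL(3,\CC))$ gives the stated homeomorphism. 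Both the injectivity across the reducible boundary and the continuity of the inverse are thus controlled by the finiteness of $\res$ and by Lemma~\ref{lemm:redtoirreduc}, rather than by any explicit inversion of the trace relations expressing $z,\bar z$ through $(\alpha,\bar\alpha,\beta,\bar\beta,\eta,y,\bar y)$. This also explains why the statement is phrased only topologically for this parameter set: at the five branch characters $\Phi$ need not be an immersion, so the scheme structure cannot be recovered from $(\alpha,\bar\alpha,\beta,\bar\beta,\eta,y,\bar y)$, which is precisely why one reverts to $(y,\bar y,z,\bar z,\alpha,\bar\alpha,\beta,\bar\beta)$ for the scheme-theoretic description.
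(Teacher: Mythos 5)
Your proposal is correct and follows essentially the same route as the paper: the scheme-theoretic claim is read off from Proposition~\ref{prop:parametersLawton}, and the pointwise claim from the three-to-one structure of $\res$ in Proposition~\ref{prop:3components}, with $(y,\bar y)$ separating each free $\mu_3$-orbit in a fibre, the five metabelian characters of Corollary~\ref{cor:redired} (where $y=\bar y=0$) separated by their images in $W$, and Lemma~\ref{lemm:redtoirreduc} handling the reducible boundary characters. The only difference is that you make explicit the properness-of-finite-morphisms argument upgrading the continuous injection to a homeomorphism onto its image, a step the paper leaves implicit in the discussion preceding the proposition.
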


Here we say \emph{scheme-theoretically} because the natural defining ideal is perhaps non-radical.
The Zariski tangent space at a character $\chi$ is denoted by   $T^{Zar}_\chi X(\Gamma, \SL(3,\CC))$,
without assuming that it is scheme-reduced.

\begin{lemma}
 \label{lemma:h1inj}
 Let $\chi\in X(\Gamma, \SL(3,\CC))$ be a character such that $\res(\chi)$ is irreducible. The restriction map induces an isomorphism
 $$
  T^{ Zar}_\chi X(\Gamma, \SL(3,\CC)) \cong  T^{ Zar}_{\res(\chi)} \big( X(F_ 2,\SL(3,\CC)) ^{\phi^*} \big),
 $$
 where $\phi\co F_2\to F_2$ denotes the action of the monodromy (conjugation by $t$), $\phi^*$ the induced map, and 
 $X(F_ 2,\SL(3,\CC)) ^{\phi^*} $ the fixed point set.
\end{lemma}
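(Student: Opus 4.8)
The plan is to compute both Zariski tangent spaces as group cohomology with coefficients in the adjoint module $\mathfrak{g}=\mathfrak{sl}(3,\CC)$, and to relate them through the Wang exact sequence attached to the fibration. Write $\rho$ for the (unique up to conjugation) semisimple representation with character $\chi$. Since $\res(\chi)$ is irreducible, $\rho|_{F_2}$ is irreducible, and hence so is $\rho$. At an irreducible character the conjugation action of $\PGL(3,\CC)$ is free with closed orbit, the orbit map has differential with image $B^1$, and so by the Luna slice theorem together with the standard computation of tangent spaces \cite{LM} one has
$$T^{Zar}_\chi X(\Gamma,\SL(3,\CC)) \cong H^1(\Gamma,\mathrm{Ad}\,\rho), \qquad T^{Zar}_{\res(\chi)} X(F_2,\SL(3,\CC)) \cong H^1(F_2,\mathrm{Ad}\,\rho|_{F_2}),$$
the coefficients being $\mathfrak{g}$ with the adjoint action.

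Next I would exploit the structure $\Gamma \cong F_2 \rtimes_\phi \ZZ$, where $\ZZ=\langle t\rangle$ acts by the monodromy $\phi$ (conjugation by $t$). As $\ZZ$ has cohomological dimension one, the Lyndon--Hochschild--Serre spectral sequence of $1\to F_2\to\Gamma\to\ZZ\to 1$ collapses to the Wang exact sequence
$$\cdots \to H^k(\Gamma,\mathfrak{g}) \xrightarrow{\res^*} H^k(F_2,\mathfrak{g}) \xrightarrow{\phi^*-\mathrm{id}} H^k(F_2,\mathfrak{g}) \to H^{k+1}(\Gamma,\mathfrak{g}) \to \cdots,$$
where $\phi^*$ is the endomorphism of $H^*(F_2,\mathfrak{g})$ induced at once by $\phi$ on $F_2$ and by $\mathrm{Ad}\,\rho(t)$ on coefficients. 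Because $\rho|_{F_2}$ is irreducible, $H^0(F_2,\mathfrak{g})=\mathfrak{g}^{\rho(F_2)}=0$, and likewise $H^0(\Gamma,\mathfrak{g})=0$. The low-degree part therefore reads $0 \to H^1(\Gamma,\mathfrak{g}) \xrightarrow{\res^*} H^1(F_2,\mathfrak{g}) \xrightarrow{\phi^*-\mathrm{id}} H^1(F_2,\mathfrak{g})$, so that $\res^*$ is injective with image $\ker(\phi^*-\mathrm{id}) = H^1(F_2,\mathfrak{g})^{\phi^*}$.

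It remains to identify $H^1(F_2,\mathfrak{g})^{\phi^*}$ with the Zariski tangent space of the fixed-point set. I would regard $X(F_2,\SL(3,\CC))^{\phi^*}$ scheme-theoretically as the equalizer of $\phi^*$ and the identity, namely the pullback of the diagonal under $(\mathrm{id},\phi^*)$; its Zariski tangent space at the fixed point $\res(\chi)$ is then exactly $\ker\big(d(\phi^*)_{\res(\chi)} - \mathrm{id}\big)$ inside $T^{Zar}_{\res(\chi)} X(F_2,\SL(3,\CC)) = H^1(F_2,\mathfrak{g})$. Since the geometric monodromy action on $X(F_2,\SL(3,\CC))$ differentiates to the cohomological map $\phi^*$ on $H^1$, this kernel is precisely $H^1(F_2,\mathfrak{g})^{\phi^*}$. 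Combining this with the previous step, and observing that the map $\res^*$ furnished by the spectral sequence is the differential of the geometric restriction $\res$, yields the desired isomorphism $T^{Zar}_\chi X(\Gamma,\SL(3,\CC)) \cong T^{Zar}_{\res(\chi)}\big(X(F_2,\SL(3,\CC))^{\phi^*}\big)$, induced by $d\,\res$.

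The main obstacle I anticipate is bookkeeping about scheme structures and compatibility rather than any deep difficulty. I must ensure that the fixed locus carries its equalizer (scheme-theoretic) structure, so that its Zariski tangent space is genuinely $\ker(\phi^*-\mathrm{id})$ and not the possibly smaller tangent space of the reduced fixed locus; and I must check that the arrow $\res^*$ produced algebraically by the Wang sequence coincides with the differential of the set-theoretic restriction map $\res$, so that the two identifications are compatible and the resulting isomorphism is exactly the one induced by $\res$. By contrast, the vanishing $H^0=0$ and the identification of Zariski tangent spaces with $H^1$ at irreducible characters are standard and present no trouble.
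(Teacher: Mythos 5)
Your proof is correct and follows essentially the same route as the paper: both identify the Zariski tangent spaces with $H^1(\Gamma,\mathfrak{sl}(3,\CC)_{\operatorname{Ad}\rho})$ and $H^1(F_2,\mathfrak{sl}(3,\CC)_{\operatorname{Ad}\rho})$ via Weil's theorem, and your Wang exact sequence is exactly the Lyndon--Hochschild--Serre sequence for $1\to F_2\to\Gamma\to\ZZ\to 1$ that the paper uses, with the flanking terms killed by the same observation that irreducibility of $\rho\vert_{F_2}$ forces $\mathfrak{sl}(3,\CC)^{F_2}_{\operatorname{Ad}\rho}=H^0(F_2,\mathfrak{g})=0$. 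Your explicit attention to the equalizer scheme structure on the fixed-point set, which the paper passes over as a ``natural isomorphism,'' is a welcome extra precision rather than a divergence.
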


\begin{proof}
We have a natural isomorphism:
$$
 T^{ Zar}_{\res(\chi)} \big( X(F_ 2,\SL(3,\CC)) ^{\phi^*} \big)\cong  \big( T^{ Zar}_{\res(\chi)} X(F_ 2,\SL(3,\CC)) \big)^{\phi^*},
$$
where $\phi^*$ still denotes the tangent map. 
By Weil's theorem, those Zariski tangent spaces (as schemes) are naturally isomorphic to  cohomology groups:
$$
T^{ Zar}_\chi X(\Gamma, \SL(3,\CC))\cong  H^1(\Gamma,\mathfrak{sl}(3,\CC)_{\operatorname{Ad}\rho})
\quad\textrm{ and }\quad
 T^{ Zar}_{\res(\chi)} X(F_ 2,\SL(3,\CC)) \cong  H^1(F_2,\mathfrak{sl}(3,\CC)_{\operatorname{Ad}\rho})
$$
for any representation $\rho$ with character $\chi$ (cf.~\cite{LM}), where $ \operatorname{Ad}\rho$
denotes the adjoint representation. We let $\phi^*$ denote also the induced map in cohomology,
as it corresponds to the tangent map. The Lyndon-Hochschild-Serre spectral sequence applied  to 
$
1\to F_2\to\Gamma\to\ZZ=\langle t\rangle\to 1
$
yields:
$$
 H^1(\ZZ,\mathfrak{sl}(3,\CC)^{F_ 2}_{\operatorname{Ad}\rho})\to
  H^1(\Gamma,\mathfrak{sl}(3,\CC)_{\operatorname{Ad}\rho}) \to
  H^1(F_2,\mathfrak{sl}(3,\CC)_{\operatorname{Ad}\rho})^{\phi^*} \to
  H^2(\ZZ,\mathfrak{sl}(3,\CC)^{F_ 2}_{\operatorname{Ad}\rho}),
$$
cf.~\cite[6.8.3]{Weibel}. Thus it suffices to show that the invariant subspace  
$\mathfrak{sl}(3,\CC)^{F_ 2}_{\operatorname{Ad}\rho}$ is trivial.
By contradiction, assume that  there exists a nonzero $\theta\in \mathfrak{sl}(3,\CC)$ $F_ 2$-invariant,
namely ${\operatorname{Ad}}_{\rho( a)}(\theta)= {\operatorname{Ad}}_{\rho( b)}(\theta)=\theta$. 
This implies that
$\exp(\lambda \theta)$ commutes with $\rho(a)$ and $\rho(b)$, for every $ \lambda \in\mathbb R$,
 hence the restriction $\rho\vert_{F_ 2}$ 
is reducible.  
\end{proof}

\begin{remark}
\label{rem:smoothW}
The singular locus \emph{as schemes} of the components of $W$ is 
the set of $F_2$-reducible characters of $W$, as this is the singular
locus of the discriminant in Lemma~\ref{lem:Wj}. Hence it is the union of the five ramification points 
in Corollary~\ref{cor:redired} and the curve $\res(X_{red}(\Gamma,\SL(3,\CC))$ in 
Lemma~\ref{lemma:intersect}.
\end{remark}

In the previous remark we say \emph{as schemes} because 
during the computation of $U_i$ and $W_i$ we just use traces on group relations
and Lawton's theorem, and we never compute radicals of the ideals

\begin{proposition}
\label{prop:smooth}
 The components $V_0$, $V_1$ and $V_2$ are smooth (and scheme reduced) everywhere except possibly at
  $(\alpha,\beta) = (1,1)$.
 \end{proposition}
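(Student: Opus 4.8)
The plan is to reduce the smoothness question to a cohomological computation via Lemma~\ref{lemma:h1inj}. For a character $\chi \in V_i$ whose restriction to $F_2$ is irreducible, that lemma identifies the Zariski tangent space $T^{Zar}_\chi X(\Gamma,\SL(3,\CC))$ with the $\phi^*$-fixed subspace of $T^{Zar}_{\res(\chi)} X(F_2,\SL(3,\CC))$. Since $X(F_2,\SL(3,\CC))$ is smooth of dimension $8$ at any $F_2$-irreducible character (by Lawton's description, or equivalently because $H^1(F_2,\mathfrak{sl}(3,\CC))$ has dimension $8$ there while $H^0$ and $H^2$ vanish), the tangent space to the fixed-point set is the eigenspace for eigenvalue $1$ of the monodromy action $\phi^*$ on $\CC^8$. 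The point is then to show this eigenspace has dimension exactly $2$, matching the expected dimension of $V_i$, so that $\chi$ is a smooth point.

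The key steps, in order, are as follows. First, I would restrict attention to $F_2$-irreducible characters, so that Lemma~\ref{lemma:h1inj} applies and the problem becomes linear algebra on the monodromy action. Second, I would analyze the remaining $F_2$-reducible characters in $\overline{X_{irr}}$: these are of two kinds, the five metabelian ramification points of Corollary~\ref{cor:redired} (where Boden--Friedl \cite{BF3} already established smoothness), and the curve $\res(X_{red}(\Gamma,\SL(3,\CC)))$ described in Lemma~\ref{lemma:intersect} together with its preimage computed in Lemma~\ref{lemm:redtoirreduc}. For the latter, Lemma~\ref{lemm:redtoirreduc} already asserts that all such points are smooth on $\overline{X_{irr}}$ except at $\mu_3\cdot(2,2,2,2,1,1,1,1)$ and $\mu_3\cdot(4,4,8,8,3,3,3,3)$; and since the points over $(4,4,8,8,3,3,3,3)$ are smooth on $\overline{X_{irr}}$ by the Remark following Corollary~\ref{cor:fibre}, only the three points $\mu_3\cdot(2,2,2,2,1,1,1,1)$, which lie over $(\alpha,\beta)=(1,1)$, remain as possible singularities. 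Third, for the generic $F_2$-irreducible points I would compute the dimension of the $\phi^*$-invariants, either by a direct eigenvalue computation of the monodromy on the tangent space $\CC^8$ to $X(F_2,\SL(3,\CC))$ (here software \cite{sage,mathematica} is natural, as flagged in the introduction), or by using the Euler-characteristic/duality argument: the alternating sum $\dim H^1(\Gamma) - \dim H^0(\Gamma) - \dim H^2(\Gamma)$ is governed by the $\SL(3,\CC)$-representation-variety dimension count, forcing $\dim T^{Zar}_\chi = 2$ once $H^0$ and the relevant $H^2$ vanish. Smoothness and scheme-reducedness then follow from the tangent space having the expected dimension $2$ equal to the dimension of the component.

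The main obstacle I expect is the computation at $(\alpha,\beta)=(1,1)$, which is precisely why the statement excludes it. At these points the twisted Alexander polynomial $t^2-3t+1$ has the ratio-of-eigenvalues condition degenerate in a way that Lemma~\ref{lemm:redtoirreduc} flags (the invariant $\mathfrak{sl}(3,\CC)^{F_2}$ vanishing argument still applies for $F_2$-irreducible $\chi$, but the fiber structure and the monodromy eigenvalues collapse), so the clean dimension count may fail and these points may genuinely be singular — consistent with the main Theorem~\ref{thm:main}, which lists $\mu_3\cdot(2,2,2,2,1,1,1,1)$ as the singular points of the $V_i$. Away from $(1,1)$, the substantive work is verifying the vanishing of $H^0$ and of the obstruction term $H^2(\ZZ, \mathfrak{sl}(3,\CC)^{F_2})$ in the spectral sequence — but the latter vanishes automatically once $\mathfrak{sl}(3,\CC)^{F_2}=0$, which holds whenever $\res(\chi)$ is irreducible, exactly as in the proof of Lemma~\ref{lemma:h1inj}. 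Thus the only real content beyond bookkeeping is the eigenvalue-$1$ multiplicity computation of $\phi^*$, which I would carry out explicitly on each component $W_i$ using the equations of Lemma~\ref{lem:Wj}.
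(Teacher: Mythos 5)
Your overall framework is the paper's: Lemma~\ref{lemma:h1inj} for $F_2$-irreducible characters, \cite{BF3} at the five ramification points of Corollary~\ref{cor:redired}, and Lemma~\ref{lemm:redtoirreduc} for the reducible characters. But your treatment of the points $\mu_3\cdot(4,4,8,8,3,3,3,3)$, i.e.\ $(\alpha,\beta)=(3,3)$, is circular. The remark you invoke (the one following Corollary~\ref{cor:fibre}) begins ``We will see later in Proposition~\ref{prop:smooth}\ldots'': its conclusion that these points are smooth on $\overline{X_{\mathrm{irr}}}$ is \emph{deduced from} the proposition you are proving, so it cannot be an input. Lemma~\ref{lemm:redtoirreduc} genuinely leaves both $(1,1)$ and $(3,3)$ open — its proof rests on \cite[Corollary 8.9]{HeusenerP}, which requires the relevant root of the twisted Alexander polynomial to be simple, and simplicity fails exactly at those two values. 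Since the proposition excludes only $(1,1)$, you owe an independent argument at $(3,3)$; the paper supplies one by invoking the deformation arguments of \cite[Thm.~1.3]{HPAGT}.

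There is a second gap in your main dimension count: the claim that the eigenvalue-$1$ eigenspace of $\phi^*$ has dimension exactly $2$ at every $F_2$-irreducible character is false along the curves $V_0\cap V_1$ and $V_0\cap V_2$ (e.g.\ $\alpha=\bar\alpha=1$, $\beta=\bar\beta\neq 1$), which consist mostly of irreducible characters with $(\alpha,\beta)\neq(1,1)$. By Lemma~\ref{lemma:h1inj} the invariant subspace is the Zariski tangent space to the full fixed-point scheme, i.e.\ to $W=W_0\cup W_1\cup W_2$ at $\res(\chi)$; where two $2$-dimensional components meet along a curve this space has dimension at least $3$, so your uniform count would wrongly flag these points as singular (or simply fail to decide smoothness of each individual $V_i$ there). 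The paper's proof handles this explicitly: at such intersection points one must cut the tangent space of the union by the linear conditions distinguishing the components ($\alpha=\bar\alpha$, $\beta=\bar\beta$ for $V_0$; $\alpha=\bar\alpha=1$ for $V_1$; $\beta=\bar\beta=1$ for $V_2$) and then read off that each component's Zariski tangent space is $2$-dimensional. A minor efficiency remark: away from these intersections no fresh eigenvalue or Euler-characteristic computation is needed, since the invariant subspace is $T^{Zar}_{\res(\chi)}W_0$, already known to be $2$-dimensional by Lemma~\ref{lem:Wj} and Remark~\ref{rem:smoothW} (the scheme-theoretic singular locus of $W$ is exactly the $F_2$-reducible locus, by the discriminant analysis).
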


Scheme reduced at one point means that the local ring is reduced. This property, together with smoothness,
holds when the
dimension of the Zariski-tangent space equals the dimension of the irreducible component. 
 
\begin{proof}
Let $\chi \in V_0\cup V_1\cup V_2$ be an irreducible character. If $\chi $ is one of the five ramification points 
of Corollary~\ref{cor:redired},
 then it is smooth and scheme reduced 
by \cite{BF3}.
Otherwise, $\chi\vert_{F_2}$ is irreducible,  thus Lemma~\ref{lemma:h1inj} applies and we get that 
$ T^{ Zar}_\chi X(\Gamma, \SL(3,\CC))$ is isomorphic to $T^{ Zar}_{\res(\chi)} \big( X(F_ 2,\SL(3,\CC)) ^{\phi^*} \big)$.
In particular, if $\chi\not\in V_0\cap (V_1 \cup V_2)$,
then
$ T^{ Zar}_\chi V_0$ is isomorphic to $T^{ Zar}_{\res(\chi)} W_0$ and by Remark~\ref{rem:smoothW}
it is a two dimensional space.

For irreducible characters in $V_0\cap V_1$ or in $V_0\cap V_2$, the cohomology groups yield
the tangent spaces to $V_0\cup V_1\cup V_2$ and $W=W_0\cup W_1\cup W_2$, thus we need to add 
linear conditions on the ambient coordinates to distinguish components ($\alpha=\bar\alpha$, $\beta=\bar\beta$, 
$\alpha=1$, or $\beta=1$). From those linear conditions we easily get
the dimension of the Zariski tangent space.

Reducible characters are discussed in Lemma~\ref{lemm:redtoirreduc}, that yields smoothness for each reducible character in $V_0$ 
except for
$\alpha=\beta=1$ and $\alpha=\beta= 3$. Smoothness for this last point can be checked using the arguments of 
\cite[Thm.~1.3]{HPAGT}.
\end{proof}

%%%%%%%%%%%%%%%%%%%%%%%%%%%%%%%%%%%%%%%%%%
\section{Description of the non-distinguished components}\label{sec:nondist}
%%%%%%%%%%%%%%%%%%%%%%%%%%%%%%%%%%%%%%%%%%

Now we move to get equations for $V_0,V_1,V_2$. We name $V_0$ the \emph{distinguished component}
of the character variety $X(\Gamma,\SL(3,\CC))$ since it is the one containing representations
coming from $\Sym^2: \SL(2,\CC)\to \SL(3,\CC)$. Accordingly, $V_1,V_2$ will be called 
non-distinguished components.

In this section we find explicit equations of the non-distinguished components $V_1,V_2$. 
We start with $V_2$. We keep on working with Presentation~\eqref{eq:pres-fiber} and using capitals to denote the images of 
$a$, $b $ and $t\in \Gamma$ in $\SL(3,\CC)$.

\begin{proposition}\label{prop:eta-non-distinguished-V2}
 The non-distinguished component $V_2$ is described as follows. Taking coordinates
 $\alpha=\tr(A)$, $\bar\alpha=\tr(A^{-1})$,  $\beta=\tr(B)$, $\bar\beta=\tr(B^{-1})$, 
 $\eta=\tr([A,B])$, 
  $y=\tr(T)$,  $\bar{y}=\tr(T^{-1})$, $z=\tr(TA^{-1} TA)$, $\bar z=\tr(A^{-1} T^{-1} AT^{-1})$, 
  the equations satisfied by $V_2$ are
  $\beta=\bar\beta=1$, $y  \bar{y} = \alpha+\bar\alpha+2$,
 $y^3 + \bar{y}^3 =\alpha\bar\alpha+ 5 \alpha+5\bar\alpha+ 5$. 
$\eta  =\bar{y}^3-3(\alpha+\bar\alpha+1)$, $z = y^2 - \bar{y}$, $\bar{z} = \bar{y}^2 -y$.
 
 The only reducible
 representations are $\mu_3\cdot (2,2,2,2,1,1,1,1)$ and are partially reducible.
 % with $\alpha=\bar\alpha=1$, $(y,\bar{y})=
%(2,2)$, $(2\varpi,2\varpi^2)$, $(2\varpi^2,2\varpi)$, $\varpi=\vvarpi$. 
\end{proposition}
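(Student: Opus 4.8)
The plan is to promote the already-understood base component $W_2$ to $V_2$ by adjoining the meridian traces, and then to pin down the fibre coordinates by a direct matrix computation. By Lemma~\ref{lem:Wj} the component $W_2=\res(X_{irr}(\Gamma,G))$ carries $\beta=\bar\beta=1$ together with the commutator relation $\eta^2-P\eta+Q=0$ for the explicit $P=\alpha\bar\alpha-\alpha-\bar\alpha-1$ and $Q=\alpha^3+\bar\alpha^3-3\alpha\bar\alpha+2$; by Proposition~\ref{prop:3components} and the discussion preceding it, $V_2$ is the $3$-to-$1$ branched cover of $W_2$ obtained by adjoining $y=\tr(T)$ and $\bar y=\tr(T^{-1})$, the covering being the quotient by the center action $(y,\bar y)\mapsto(\varpi y,\varpi^2\bar y)$ (Proposition~\ref{prop:parameters}). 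Thus the equations $\beta=\bar\beta=1$ are inherited, and what remains is to express $y,\bar y$ and the derived coordinates in terms of the base data $\alpha,\bar\alpha,\eta$.

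First I would fix an explicit irreducible representation of the fibre over a generic point of $W_2$: pick $A,B\in\SL(3,\CC)$ with $\tr(B)=\tr(B^{-1})=1$ (so $B$ has characteristic polynomial $(x-1)(x^2+1)$), $\tr(A)=\alpha$, $\tr(A^{-1})=\bar\alpha$, and $\tr([A,B])=\eta$. Because $\rho|_{F_2}$ is irreducible, the relations $TAT^{-1}=AB$ and $TBT^{-1}=BAB$ of Presentation~\eqref{eq:pres-fiber} determine $T$ up to the scalar $\mu_3$ by Schur's lemma; concretely $T$ is the unique-up-to-scale solution of the linear system $TA=ABT$, $TB=BABT$, normalized by $\det T=1$. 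From this $T$ one reads off $y=\tr(T)$, $\bar y=\tr(T^{-1})$, and $z=\tr(TA^{-1}TA)=\tr(T^2B)$, $\bar z=\tr(A^{-1}T^{-1}AT^{-1})=\tr(B^{-1}T^{-2})$, the second equalities coming from the relations together with cyclicity of the trace.

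The core of the argument is then an elimination: the three $\mu_3$-translates $T,\varpi T,\varpi^2T$ produce the three points over each base point, so the surviving relations among $y,\bar y$ and $\alpha,\bar\alpha,\eta$ are exactly the center-invariant ones. Symmetrizing over $\mu_3$, the invariants $y\bar y$ and $y^3+\bar y^3$ should yield the stated identities $y\bar y=\alpha+\bar\alpha+2$ and $y^3+\bar y^3=\alpha\bar\alpha+5\alpha+5\bar\alpha+5$, while $\eta$ and $z,\bar z$ become genuine functions of the chosen lift, giving $\eta=\bar y^3-3(\alpha+\bar\alpha+1)$, $z=y^2-\bar y$, and $\bar z=\bar y^2-y$. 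For the last two it is worth recording the Cayley--Hamilton identity $\tr(T^2)=y^2-2\bar y$, so that $z=y^2-\bar y$ is equivalent to the separate identity $\tr(T^2B)-\tr(T^2)=\bar y$, to be checked inside the same computation. The main obstacle is precisely this step: since $T$ is only given implicitly by the intertwining relations, extracting closed forms for its traces is unavoidably heavy and is where computer algebra (Sage or Mathematica) is needed; one must also verify that the relations, derived on the irreducible open locus, extend to all of $V_2$ by Zariski density, and that no further relations survive --- equivalently, that the listed equations already cut out a surface, matching $\dim V_2=2$ (indeed $y\bar y$ and $y^3+\bar y^3$ determine $\alpha+\bar\alpha$ and $\alpha\bar\alpha$, so the projection to the $(y,\bar y)$-plane is finite).

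Finally, for the reducible locus I would invoke Lemma~\ref{lemma:intersect}, according to which $W_2$ meets $\res(X_{red}(\Gamma,G))$ only at the point $\alpha=\bar\alpha=1$, $\eta=-1$. Over this single base point the three lifts are $\mu_3\cdot(2,2,2,2,1,1,1,1)$ --- consistent with $y\bar y=4$, $y^3+\bar y^3=16$, and $\eta=-1$ --- and these are partially reducible, being exactly the points where $X_\mathrm{PR}$ meets $V_2$ (Proposition~\ref{prop:SL3-reducibles}). The metabelian $F_2$-reducible characters of Corollary~\ref{cor:redired} lying in $W_2$ have $y=\bar y=0$ but are irreducible as representations of $\Gamma$, so they contribute no further reducible characters, which establishes the final assertion.
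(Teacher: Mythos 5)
Your proposal is correct and follows essentially the same route as the paper's proof: fix normal forms for $A,B$ over $W_2$ (Lemma~\ref{lem:Wj}), solve the linear intertwining system $TA=ABT$, $TB=BABT$ for the unique-up-to-$\mu_3$ matrix $T$, extract the $\mu_3$-invariant trace identities by computer algebra, extend to the degenerate loci by continuity/Zariski density, and identify the reducible points via Lemma~\ref{lemma:intersect} and the $X_\mathrm{PR}$ parametrization. The paper simply carries out the matrix computation explicitly (including separate parametrizations where the chosen slice for $A$ or the determinant $d_0$ degenerates), exactly as your sketch anticipates.
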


\begin{proof}
By Lemma \ref{lem:Wj}, $W_2$ is described as the double cover of the plane 
$(\alpha,\bar\alpha)$, ramified over the curve  $\Delta=0$, where
 $$
 \Delta=   \alpha^2 \bar \alpha^2-4  \alpha^3-4   \bar \alpha  ^3-2  \alpha^2
\bar \alpha-2   \alpha\bar \alpha^2+ \alpha^2+  
\bar \alpha^2+12 \alpha\bar \alpha+2 \alpha+2 \bar \alpha-7.
 $$
Thus the ring of functions of $W_2$ is $\QQ[\alpha,\bar\alpha][\sqrt{\Delta}]$.
We have $\beta=\bar\beta=1$, so the matrix $B$ has eigenvalues $1,i,-i$. We fix a basis that
diagonalizes $B$, so that we can write
 $$
 B=\left(
\begin{array}{ccc}
 1 & 0 & 0 \\
 0 & i & 0 \\
 0 & 0 & -i 
\end{array}
\right).
 $$
Assume that the matrix $A$ has non-zero entries $(2,1)$ and $(3,1)$. Rescaling the basis vectors, we can write
 $$
A=\left(
\begin{array}{ccc}
 a & b & c \\
 1 & d & e \\
 1 & f & g 
\end{array}
\right).
$$
Solving the equations $\tr(A)=\tr(AB)$, $\tr(B)=\tr(BA^{-1})$, $\tr(B)=\tr(B^2A)$ and the
equations for the inverses  $\tr(A^{-1})=\tr(A^{-1}B^{-1})$, $\tr(B^{-1})=\tr(B^{-1}A)$, $\tr(B^{-1})=\tr(B^{-2}A^{-1})$,
together with $\det(A)=1$, $\alpha=\tr (A)$ and $\bar\alpha=\tr(A^{-1})$, we get the coefficients
\cite{fig8html}
 $$
A=\left(
\begin{array}{ccc}
 \frac{\alpha+1}{2} &  \frac{1-i}{8}(\alpha^2-2 \bar\alpha+1) &\frac{1+i}{8} (\alpha^2-2 \bar\alpha+1 ) \\
 1 & \frac{1-i}{4}(\alpha-1) & \frac{1+i}{4}\frac{(\alpha^3-\alpha^2-4\alpha \bar\alpha-4 \alpha+5+2i\sqrt\Delta)}{\alpha^2-2 \bar\alpha+1} \\
 1 &  \frac{1- i}{4}\frac{(\alpha^3-\alpha^2-4\alpha \bar\alpha-4 \alpha+5-2i\sqrt\Delta)}{\alpha^2-2 \bar\alpha+1} 
& \frac{1+i}{4}(\alpha-1) 
\end{array}
\right).
$$
This matrix is well-defined off the set $\alpha^2-2\bar\alpha+1=0$.

Now, solving the equations $TA=ABT$ and $TB=BABT$, we get a one-dimensional space of matrices $T$ spanned by
{\tiny $$
T_0= \left(
\begin{array}{ccc}
   \begin{minipage}{5cm}{$ (\alpha^2-2 \bar\alpha+1) \big(((1+i) \alpha+3-i)\sqrt{\Delta} 
 -(2-2i) \bar\alpha^2-(6-6i) \alpha \bar\alpha+(2-2i) \bar\alpha-(1+i) \alpha^3
+(-1+3i) \alpha^2+(3-i)\alpha+(7-3i)\big) $ } \end{minipage}
   & \begin{minipage}{5cm}{$\frac{1}{4} (\alpha^2-2 \bar\alpha+1)^2 \big((-1-i)\sqrt{\Delta} +
2i \alpha^2+(1-i)\alpha+(1-i)\alpha\bar\alpha+(5-5i)\bar\alpha-(5-5i)\big) $ } \end{minipage}
   & \begin{minipage}{5cm}{$\frac{1}{4} (\alpha^2-2 \bar\alpha+1) \big(
   (-(1+i) \alpha^2-(4-4i) \alpha- (2+2 i) \bar\alpha +(7-i))\sqrt{\Delta}
   +   (6-6i)\alpha\bar\alpha^2+(-2-6 i)\bar\alpha^2 
   -(1-i)\alpha^3\bar\alpha+(3+5 i)\alpha^2\bar\alpha+(1+15 i)\alpha\bar\alpha 
   -(3-11i) \bar\alpha + 2\alpha^4+(-1-7i)\alpha^3+(-5-5i) \alpha^2
   +(11+13 i)\alpha +(-3-19 i)   \big)$ } \end{minipage} \\[7pt]
\begin{minipage}{5cm}{$-2(\alpha^2-4 i\alpha+2 \bar\alpha-(3-4 i))\sqrt{\Delta} 
-12i \alpha\bar\alpha^2-(8+4i) \bar\alpha^2+ 2i\alpha^3\bar\alpha+(-8+2i)\alpha^2\bar\alpha+
(16+14i)\alpha \bar\alpha +(8+14i) \bar\alpha +
(2-2i) \alpha^4 +(-8-6i) \alpha^3 +(-4+6i)\alpha^2+(1-12 i)\alpha-(8-11 i)$ } \end{minipage}
    &  \begin{minipage}{5cm}{$ (\frac{1}{2}(\alpha^2-2 \bar\alpha+1) \big((2 \alpha+2-4 i)\sqrt{\Delta} +4i \bar\alpha^2+(8+ 4i) \alpha \bar\alpha-4i \bar\alpha
-2 \alpha^3 +(2+4 i) \alpha^2 +(2-4 i)\alpha-(10+4i )\big) $ } \end{minipage}
   & \begin{minipage}{5cm}{$4(\bar\alpha   \alpha+\alpha+\bar\alpha-3)\sqrt{\Delta}  
   +i \alpha^5+10i \alpha^3-15i \alpha+4i \bar\alpha^3+
   4i \alpha^2\bar\alpha^2+12i\alpha\bar\alpha^2+4i\bar\alpha^2 -
   i\alpha^4\bar\alpha+4i\alpha^3\bar\alpha-6i\alpha^2\bar\alpha+36i\alpha\bar\alpha 
   +17i \bar\alpha+32$ } \end{minipage} \\[7pt]
\begin{minipage}{5cm}{$  (\alpha^2-2 \bar\alpha+1) \big(-2i\sqrt{\Delta} +(-2+2i) \alpha^2+2\alpha+2\alpha \bar\alpha+(6-4 i) \bar\alpha-(8-2i) \big) $ } \end{minipage}
   & \begin{minipage}{5cm}{$(\alpha^2-2 \bar\alpha+1)^2 (\alpha-\bar\alpha-4 i) $ } \end{minipage}
   &\begin{minipage}{5cm}{$\frac{1}{2} (\alpha^2-2\bar\alpha+1) \big((2i \alpha+4+2i)\sqrt{\Delta}
   -2i\alpha^3+(-4+2 i) \alpha^2+ (-4+8 i)\alpha \bar\alpha+(4+2 i)\alpha
   +2i \bar\alpha^2- 2i\bar\alpha-(5+2i) \big) $ } \end{minipage}
\end{array}
\right)
 $$}
Let $y_0=\tr (T_0)$, $\bar{y}_0=\det(T_0) \tr( T_0^{-1})$, $d_0=\det (T_0)$. 
It is readily computed that
{\small
 \begin{align*}
 y_0 =& \, 2 (\alpha^2-2 \bar\alpha+1) \Big( \big((1+i) \alpha+3-i\big)\sqrt{\Delta} -(1+i) \alpha^3 +(-1+3 i) \alpha^2+ (2+6 i) \alpha \bar\alpha+(3- i)\alpha  \\
   & +(-2+2i) \bar\alpha^2+(2-2i) \bar\alpha-(7-7i) \Big), 
 \\ %\end{align*} \begin{align*}
 \bar{y}_0 =& \, 4 (\alpha^2-2 \bar\alpha+1)^2  \Big( \big(-i \alpha^4-4 \alpha^3+ 6 i \alpha^2+(4i-4)  \alpha^2\bar\alpha+12 \alpha +(2+4 i)4 \alpha  \bar\alpha +4 i\bar\alpha^2+(4i-4 ) \bar\alpha\\
   &-(33i+8 )\big) \sqrt{\Delta}+i\alpha^5 \bar\alpha-3i\alpha^5+ (4-3i)\alpha^4 \bar\alpha-(12+11i\alpha^4 +(4-4 i) \alpha^3 \bar\alpha^2+(-8+6 i) \alpha^3 \bar\alpha\\
   &+(-28+18 i)\alpha^3 -(20+4 i) \alpha^2\bar\alpha^2+(16+46i)\alpha^2 \bar\alpha 
   +(20+2i)\alpha^2 -(16+4 i)  \alpha \bar\alpha^3-(20+8 i) \alpha  \bar\alpha^2\\
   &+(120-35 i)  \alpha \bar\alpha+(12-35 i) \alpha  
   +(-16+12i)\bar\alpha^3+ (4-8i)\bar\alpha^2+(28+9i)\bar\alpha -(88 -21i) \Big), 
 \\ %\end{align*} \begin{align*}
 d_0 =&\, 8(1-i) (\alpha^2-2 \bar\alpha+1)^3 \Big( \big(-\alpha^6+(6+6 i) \alpha^5+ (4+6 i) \alpha^4\bar\alpha+(17-36 i)\alpha^4+(8-24 i)\alpha^3  \bar\alpha \\
  & -(84+20 i)\alpha^3 +  (12-24 i) \alpha^2\bar\alpha^2-(72-60 i)\alpha^2 \bar\alpha+(21+120 i)\alpha^2- (72+72i) \alpha \bar\alpha^2 \\
  &+(136+104i)\alpha\bar\alpha+(134 -122i)\alpha  -8 i \bar\alpha^3-(36-24 i) \bar\alpha^2+(116+22 i) \bar\alpha-(189+36 i)\big)\sqrt{\Delta} \\
  &+ \alpha^7 \bar\alpha-7 \alpha^7  +(3+42 i)\alpha^6 -(1+6 i)\alpha^6  \bar\alpha+(-8-6 i)\alpha^5  \bar\alpha^2+(37+24 i)\alpha^5  \bar\alpha+(109-42 i)\alpha^5 \\
  &+ (-32+66 i) \alpha^4\bar\alpha^2+(107-190 i)\alpha^4 \bar\alpha-(201+152 i)\alpha^4+ (4+24 i)\alpha^3 \bar\alpha^3+(76-12 i) \alpha^3\bar\alpha^2 \\
  &-(369+104 i) \alpha^3\bar\alpha-(97-396 i)\alpha^3+(156-96 i)\alpha^2  \bar\alpha^3-(236+4 i) \alpha^2 \bar\alpha^2+(337+590 i)\alpha^2  \bar\alpha \\
  &+(373-46 i)\alpha^2 +(48+8 i)  \alpha \bar\alpha^4-(132+200 i)\alpha  \bar\alpha^3-(260-530 i) \alpha \bar\alpha^2 +(395-704 i)\alpha  \bar\alpha\\
  &-(309+426 i)\alpha +(48-56 i) \bar\alpha^4-(92-144 i) \bar\alpha^3+(76+98 i) \bar\alpha^2 
  +(69-378 i) \bar\alpha-(95-500 i)\Big).
\end{align*}
}

The choices $T\in \SL(3,\CC)$ are of the form $\varpi d_0^{-1/3}\, T_0$, $\varpi\in \mu_3$. 
Therefore, the
coordinates  $y= \tr (T)$, $\bar{y}=\tr (T^{-1})$ are given by $y=y_0\,d_0^{-1/3}$, 
$\bar{y}= \bar{y}_0\, d_0^{-2/3}$.
A computer calculation gives
 \begin{align*} %\label{eqn:y,z}
 y  \bar{y} &= \alpha+\bar\alpha+2, \nonumber\\
y^3 &=\frac12 (\alpha\bar\alpha + 5 \alpha+5\bar\alpha+5 ) +\frac12 \sqrt{\Delta} \, ,\\
\bar{y}^3 &=\frac12 (\alpha\bar\alpha + 5 \alpha+5\bar\alpha+ 5) -\frac12 \sqrt{\Delta} \, , \\
  \eta &=\tr ([A,B])= \frac12 (\alpha\bar\alpha-\alpha-\bar\alpha -1) -\frac12 \sqrt\Delta\, .
 \end{align*}
Note that we only compute quantities that are $\mu_3$-invariant. 
First we note that $\eta  =\bar y^3-3(\alpha+\bar\alpha+1)$.
%, so $\eta$ is unnecessary to describe $V_2$.

The above equations describe a Zariski open set defined by 
$d_0\neq 0$, $\alpha^2-2\bar\alpha+1 \neq 0$. 
When we approach a point on  the set $d_0=0$, the matrix $T_0$ above becomes singular. But the
normalized matrix $T=d_0^{-1/3} T_0$ has values $y=\tr(T), \bar{y}=\tr(T^{-1})$ that tend to
finite well-defined numbers. Moreover, the representations on $X(\Gamma,F_2)$ given
by $(A,B)$ and $(BA,BAB)$ are conjugated since they have the same Lawton's coordinates.
And as they are irreducible, the conjugation matrix $T$ is uniquely defined (up to the
action of $\mu_3$). This gives well-defined $(y,\bar{y})$ up to $\mu_3$, and by continuity, they
satisfy the equations above.

For dealing with the set $\alpha^2-2\bar\alpha+1 =0$, we can argue by continuity as above. 
Else we can parametrize the set of matrices $A$ in which either the entry $(2,1)$ is zero or
the entry $(3,1)$ is zero. This gives the matrices:
$$
\left(
\begin{array}{ccc}
 \frac{\alpha+1}{2} & 0 & \frac{1}{8} \left(-\alpha^3-\alpha^2-3 \alpha+5\right) \\
 1 & \left(\frac{1}{4}-\frac{i}{4}\right) (\alpha-1) & \frac{1}{8} \left(-\alpha^2-2 \alpha-5\right) \\
 0 & 1 & \left(\frac{1}{4}+\frac{i}{4}\right) (\alpha-1) \\
\end{array}
\right), 
\left(
\begin{array}{ccc}
 \frac{\alpha+1}{2} & \alpha-1 & 0 \\
 0 & \left(\frac{1}{4}-\frac{i}{4}\right) (\alpha-1) & 1 \\
 1 & \frac{1}{8} \left(-\alpha^2-2 \alpha-5\right) & \left(\frac{1}{4}+\frac{i}{4}\right) (\alpha-1) \\
\end{array}
\right).
$$
There are two matrices because of the double covering over the locus $\alpha^2-2\bar\alpha+1 =0$.

Finally, note that over the point $(\alpha,\bar\alpha)=(1,1)$, we have partially reducible
representations in $X_\mathrm{PR}(\Gamma,\SL(3,\CC))$, and these form a curve, given as (see
Proposition \ref{prop:SL3-reducibles}):
 $$
 1-  (y \bar{y}+3)+(y^3-y \bar{y}+\bar{y}^3+3)-(y \bar{y}-1)^2 =0.
 $$
The intersection of the closure of $W_2^{irr}$ is given by $y\bar{y}=4$, $y^3+\bar{y}^3=16$.
These are the three points $(y,\bar{y})=
(2,2), (2\varpi,2\varpi^2),(2\varpi^2,2\varpi)$, $\varpi=\vvarpi$.

The parameters $(\alpha,\bar\alpha,y,\bar{y})$ describe point-wise the variety $V_2$. By Proposition
\ref{prop:parameters}, we need to add to variables 
$z=\tr(TA^{-1} TA)$ and $\bar z=\tr(A^{-1} T^{-1} AT^{-1})$, 
to describe $V_2$ scheme-theoretically.
An easy computation with the above matrices $A,B,T$ yields that
 \begin{align*}
   z &= y^2 - \bar{y},\\ 
   \bar{z} &= \bar{y}^2 -y. 
 \end{align*}
Note that to describe $V_2$ we only need the variables $(\alpha,\bar{\alpha},y,\bar{y})$
even scheme-theoretically. 
\end{proof}

We can work out the component $V_1$ in a similar way and get the following.

\begin{proposition}\label{prop:eta-non-distinguished-V1}
 The non-distinguished component $V_1$ is described as follows. Taking coordinates
 $\alpha=\tr(A)$, $\bar\alpha=\tr(A^{-1})$,  $\beta=\tr(B)$, 
 $\bar\beta=\tr(B^{-1})$, $\eta=\tr([A,B])$, 
  $y=\tr(T)$,  $\bar{y}=\tr(T^{-1})$, $z=\tr(TA^{-1} TA)$, $\bar z=\tr(A^{-1} T^{-1} AT^{-1})$, 
  the equations satisfied by $V_1$ are $\alpha=\bar\alpha=1$ and  $y  \bar{y} = \beta+\bar\beta+2,
 y^3 + \bar{y}^3 =  \beta\bar\beta+5 \beta+5\bar\beta+ 5$, 
$\eta  ={y}^3-3(\beta+\bar\beta+1)$, $z= \bar y$, $\bar z=y$.
 
 The only reducible
 representations are $\mu_3\cdot (2,2,2,2,1,1,1,1)$ and are partially reducible.
 % with $\alpha=\bar\alpha=1$, $(y,\bar{y})=
%(2,2)$, $(2\varpi,2\varpi^2)$, $(2\varpi^2,2\varpi)$, $\varpi=\vvarpi$. 
\end{proposition}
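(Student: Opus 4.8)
The plan is to follow the same strategy used for $V_2$ in Proposition~\ref{prop:eta-non-distinguished-V2}, exploiting the evident symmetry between the two non-distinguished components. For $V_1$ the roles of $\alpha$ and $\beta$ are interchanged, so now it is $A$ (rather than $B$) whose trace is fixed at $\alpha=\bar\alpha=1$, forcing $A$ to have eigenvalues $1,i,-i$. First I would fix a basis diagonalizing $A$, writing
\[
A=\left(\begin{array}{ccc} 1 & 0 & 0 \\ 0 & i & 0 \\ 0 & 0 & -i \end{array}\right),
\]
and then parametrize $B$ by a matrix with prescribed nonzero entries in the first column, exactly as was done for $A$ in the proof of Proposition~\ref{prop:eta-non-distinguished-V2}. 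The defining trace relations to impose are the analogues obtained by swapping $A\leftrightarrow B$: namely $\tr(B)=\tr(BA)$, $\tr(A)=\tr(AB^{-1})$, $\tr(A)=\tr(A^2B)$, together with the inverse relations and $\det(B)=1$. Solving these yields explicit entries of $B$ as functions of $\beta,\bar\beta$ and $\sqrt{\Delta}$, where now $\Delta$ is the discriminant $P^2-4Q$ for $W_1$ recorded in Lemma~\ref{lem:Wj}.

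Next I would solve the monodromy equations $TA=ABT$ and $TB=BABT$ (using Presentation~\eqref{eq:pres-fiber}) for the intertwiner $T$, obtaining a one-dimensional space spanned by some explicit $T_0$. From $T_0$ I extract $y_0=\tr(T_0)$, $\bar y_0=\det(T_0)\tr(T_0^{-1})$ and $d_0=\det(T_0)$; normalizing $T=d_0^{-1/3}T_0$ then gives $y=y_0 d_0^{-1/3}$ and $\bar y=\bar y_0 d_0^{-2/3}$, and a computer calculation of the $\mu_3$-invariant combinations should produce
\begin{align*}
 y\bar y &= \beta+\bar\beta+2,\\
 y^3 &= \tfrac12(\beta\bar\beta+5\beta+5\bar\beta+5)+\tfrac12\sqrt{\Delta},\\
 \bar y^3 &= \tfrac12(\beta\bar\beta+5\beta+5\bar\beta+5)-\tfrac12\sqrt{\Delta},\\
 \eta &= \tfrac12(\beta\bar\beta-\beta-\bar\beta-1)+\tfrac12\sqrt{\Delta},
\end{align*}
from which $\eta = y^3-3(\beta+\bar\beta+1)$ follows. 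Note the sign difference in the $\sqrt{\Delta}$ term of $\eta$ compared with $V_2$; this is precisely what forces $z=\bar y$ and $\bar z=y$ rather than the quadratic expressions appearing for $V_2$, reflecting that here it is $A$ that is diagonalized. I would then read off $z=\tr(TA^{-1}TA)$ and $\bar z=\tr(A^{-1}T^{-1}AT^{-1})$ directly from the explicit matrices, expecting $z=\bar y$, $\bar z=y$.

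The main obstacle is the same as for $V_2$: handling the loci where the chosen matrix form degenerates, namely where the normalization $d_0=0$ or where the analogue of $\beta^2-2\bar\beta+1=0$ makes $B$ ill-defined. On $d_0=0$ I would argue by continuity, using that the pair $(A,B)$ and its conjugate by $T$ have identical Lawton coordinates and, being $F_2$-irreducible, are conjugate by a matrix $T$ unique up to $\mu_3$; this yields finite well-defined limits $(y,\bar y)$ satisfying the same equations. For the degenerate locus I would supply the two alternative explicit parametrizations of $B$ (with a zero placed in the $(2,1)$ or $(3,1)$ entry), the two branches corresponding to the double cover over $W_1$. Finally, over $(\beta,\bar\beta)=(1,1)$ the representations become partially reducible and, intersecting with the curve of Proposition~\ref{prop:SL3-reducibles}, reduce to the three points $\mu_3\cdot(2,2,2,2,1,1,1,1)$, establishing the last assertion that these are the only reducible characters on $V_1$ and that they are partially reducible.
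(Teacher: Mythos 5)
Your overall strategy coincides with the paper's: the paper proves Proposition~\ref{prop:eta-non-distinguished-V2} for $V_2$ by an explicit slice-and-solve computation (diagonalize the matrix with traces pinned to $1$, parametrize the other generator over the double cover of Lemma~\ref{lem:Wj}, solve the intertwiner equations $TA=ABT$, $TB=BABT$, normalize $T=d_0^{-1/3}T_0$, compute the $\mu_3$-invariant trace combinations, and handle the degenerate loci by continuity), and it then disposes of $V_1$ with the single sentence that it ``can be worked out in a similar way.'' Your outline reproduces all of these steps, including the treatment of $d_0=0$, the two alternative parametrizations on the degenerate locus, and the identification of the reducible characters with $\mu_3\cdot(2,2,2,2,1,1,1,1)$.

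There is, however, a concrete error in your setup. The monodromy $a\mapsto ab$, $b\mapsto bab$ of Presentation~\eqref{eq:pres-fiber} is \emph{not} symmetric in $a$ and $b$; the symmetry exchanging $V_1$ and $V_2$ is the automorphism $h$ of Section~\ref{sec:symmetries}, which sends $a,b,t$ to conjugates of $b^{-1},a,t^{-1}$, not the transposition $a\leftrightarrow b$. The identities valid on all of $W=\res(X(\Gamma,G))$ are $\chi(ab)=\chi(a)=\alpha$, $\chi(a^{-1}b)=\chi(b)=\beta$, $\chi(ab^{-1})=\chi(b^{-1})=\bar\beta$, $\chi(b^{-1}a^{-1})=\chi(a^{-1})=\bar\alpha$. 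On $V_1$, where $\alpha=\bar\alpha=1$ but $\beta,\bar\beta$ are free, two of your three proposed relations are therefore generically false: $\tr(B)=\tr(BA)$ would force $\beta=\tr(AB)=\alpha=1$, and $\tr(A)=\tr(AB^{-1})$ would force $\bar\beta=1$; imposing them collapses your slice to $\beta=\bar\beta=1$, and on the locus you would actually have parametrized, the equations $TA=ABT$, $TB=BABT$ generically admit only $T=0$, so the computation would fail rather than produce the stated equations. (Your third relation $\tr(A)=\tr(A^2B)$ does hold on $V_1$: Cayley--Hamilton gives $\tr(A^2B)=\alpha\tr(AB)-\bar\alpha\tr(B)+\tr(A^{-1}B)=1-\beta+\beta=1$.) The repair is immediate: with $A=\diag(1,i,-i)$, impose instead $\tr(B)=\beta$, $\tr(B^{-1})=\bar\beta$, $\det(B)=1$, $\tr(AB)=\tr(B^{-1}A^{-1})=1$, $\tr(A^{-1}B)=\beta$, $\tr(AB^{-1})=\bar\beta$, with $\Delta$ the discriminant of $W_1$ from Lemma~\ref{lem:Wj}; the rest of your outline then goes through as in the paper. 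A smaller caveat: the pairing of $\eta$ with $y^3$ (rather than $\bar y^3$) and the relations $z=\bar y$, $\bar z=y$ do not follow from sign bookkeeping of $\sqrt{\Delta}$ as you suggest--they are outputs of the explicit computation (consistent, a posteriori, with applying $h^*(y)=\bar y$, $h^*(z)=\bar y^2-\bar z$ to the $V_2$ equations), so your fallback of reading them off from the explicit matrices is the right, and the paper's, justification.
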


\begin{remark}
  One can check that the only singular points of $V_2$ are the points in 
  $\mu_3\cdot (2,2,2,2,1,1,1,1)$. We check this as follows: $V_2$ is a six-covering
  of the plane $(\alpha,\bar{\alpha})$, having three preimages over the curve
  $\Delta=0$ and one preimage over the points $(-1\pm 2i, -1\mp 2i)$. The 
  singularities of $\Delta=0$ are at $(-1\pm 2i,-1\mp 2i), (1,1)$, cf. Remark~\ref{rem:smoothW}. 
  We only have to check whether $V_2$ is smooth at those points, which
  can be done by hand.
  
    Using the equations, one can easily see that the points $\mu_3\cdot (2,2,2,2,1,1,1,1)$
  are ordinary double points (locally analytically isomorphic to the surface singularity
  $\{(u_1,u_2,u_3)\in \CC^3 \ | \ u_1^2+u_2^2+u_3^2=0\}$.

  The same happens for $V_2$.
\end{remark}

%%%%%%%%%%%%%%%%%%%%%%%%%%%%%%%%%%%%%%%%%%
\section{Description of the distinguished component} \label{sec:distinguished}
%%%%%%%%%%%%%%%%%%%%%%%%%%%%%%%%%%%%%%%%%%

The distinguished component is the component $V_0$, which is a triple covering of $W_0$. 

\begin{proposition}\label{prop:distinguishedComp}
 The distinguished component is parametrized as follows. Taking coordinates
 $\alpha=\tr(A)$, $\bar\alpha=\tr(A^{-1})$,  $\beta=\tr(B)$, $\bar\beta=\tr(B^{-1})$,
 $\eta=\tr([A,B])$,  $y=\tr(T)$,  $\bar{y}=\tr(T^{-1})$, 
 $z=\tr(TA^{-1} TA)$ and $\bar z=\tr(A^{-1} T^{-1} AT^{-1})$,
 it has equations: 
  \begin{align*}
   \alpha &=\bar\alpha ,\quad \beta=\bar\beta , \\ 
     y  \bar{y} &= (\alpha+1)(\beta+1), \\
       z\bar z&= 2\alpha^2\beta+\alpha^2+1, \\
    y^3 + \bar{y}^3 &= \alpha^2\beta+\alpha\beta^2+ 6 \alpha\beta+3 \alpha+ 3 \beta+2, \\
 z^3 +\bar z^3 &= \alpha^4 \beta^2  + 10 \alpha^2\beta+ 9\alpha^2 - 2 \alpha^3 -2  ,\\
 yz +\bar y \bar z &= \alpha^2\beta+3\alpha\beta+ 3\alpha+1, \\
  \bar{y}^{2} z + y^{2} \bar{z} &=  \alpha^{2} \beta^{2}  + 4
 \alpha^{2} \beta + 2 \alpha^{2} + 4 \alpha \beta + 2 \alpha + 2 \beta + 1,\\
 \bar{y} z^{2} + y\bar{z}^{2} &= \alpha^{3} \beta^{2} +  \alpha^{3} \beta +  4 \alpha^{2} \beta 
 + 3 \alpha^{2} + 5 \alpha \beta + 3\alpha - 1\,.
  \end{align*}  
%  \begin{align*}
%   &  \eta^2 - P \eta + Q =0, \ \text{where $P,Q$ are given in Lemma \ref{lem:Wj},}\\
%   &  y  \bar{y} = (\alpha+1)(\beta+1), \\
%   & y^3 + \bar{y}^3 = \alpha^2\beta+\alpha\beta^2+ 6 \alpha\beta+3 \alpha+ 3 \beta+2, \\
% &z^3 +\bar z^3 = \alpha^4 \beta^2  + 10 \alpha^2\beta+ 9\alpha^2 - 2 \alpha^3 -2  ,\\
% & z\bar z= 2\alpha^2\beta+\alpha^2+1, \\
% &yz +\bar y \bar z = \alpha^2\beta+3\alpha\beta+ 3\alpha+1, \\
%   & (\alpha-\beta) (2\eta-(\alpha^2\beta^2-2\alpha^2\beta-2\beta^2\alpha+2\alpha^2+2\beta^2-3))= \\
%    & \qquad \qquad \qquad \qquad =(\alpha\beta-2\alpha-2\beta+3 )
%(2y^3-(\alpha^2\beta+\alpha\beta^2+ 6 \alpha\beta+3 \alpha+ 3 \beta+2)), \\
% &(1-\alpha)(2\eta-(\alpha^2\beta^2-2\alpha^2\beta-2\beta^2\alpha+2\alpha^2+2\beta^2-3))= \\
% &\qquad\qquad \qquad \qquad  = (\alpha\beta-2\alpha-2\beta+3 ) (2\bar{y}\bar z 
% - (\alpha^2\beta+3\alpha\beta+ 3\alpha+1)), \\
% &
% (\alpha^3\beta+3 \alpha^2-4 \alpha)(2\eta-
% (\alpha^2\beta^2-2\alpha^2\beta-2\beta^2\alpha+2\alpha^2+2\beta^2-3)) = \\
%&\qquad \qquad \qquad \qquad = (\alpha\beta-2\alpha-2\beta+3 ) (2z^3 -
%(\alpha^4 \beta^2  + 10 \alpha^2\beta+ 9\alpha^2 - 2 \alpha^3-2)) .
%  \end{align*}
The intersection with the reducible locus is as follows:
\begin{itemize}
\item $V_0\cap X_\mathrm{TR}$ is given by the three points
$(y,\bar y, z,\bar z,\alpha,\bar\alpha,\beta\bar\beta)=\mu_3 \cdot (4,4,8,8,3,3,3,3)$. These points are smooth points of 
$V_0$ and $X_\mathrm{TR}$ respectively.
\item $V_0\cap X_\mathrm{PR}$ is given by a six-covering of
the curve $\alpha\beta-2 \alpha-2 \beta+3=0$, $\alpha\neq 3$,
ramified over the points $(\alpha,\beta)=(1,1)$ and 
$ \left( \frac{1\pm\sqrt 5}{2},  \frac{1\mp\sqrt 5}{2} \right)$, where there are
only three preimages. For $(\alpha,\beta)=(1,1)$ the preimages are given by $\eta=-1$, 
$(y,\bar y,z,\bar z)=\mu_3\cdot (2,2,2,2)$, and those 
are the same three points as in $V_j \cap X_{\mathrm{PR}}$, $j=1,2$.
For $(\alpha,\beta)= \left( \frac{1\pm\sqrt 5}{2},  \frac{1\mp\sqrt 5}{2} \right)$ the preimages are
  $\eta=3$,  $(y,\bar y,z,\bar z)=\mu_3 \cdot \left(-1,-1, \frac{1\mp\sqrt 5}{2},\frac{1\mp\sqrt 5}{2} 
  \right)$.
\end{itemize}
\end{proposition}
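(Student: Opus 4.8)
The plan is to follow the same strategy that succeeded for the non-distinguished components $V_1$ and $V_2$ in Propositions~\ref{prop:eta-non-distinguished-V2} and \ref{prop:eta-non-distinguished-V1}, but now over $W_0$, which is genuinely a degree-three cover rather than a quotient by a simpler symmetry. Recall from Proposition~\ref{prop:3components} that $V_0$ is a $3$-to-$1$ branched cover of $W_0$, with the three sheets corresponding to the $\mu_3$-action $(y,\bar y)\mapsto(\varpi y,\varpi^2\bar y)$, and from Lemma~\ref{lem:Wj} that $W_0$ is the double cover of the $(\alpha,\beta)$-plane (with $\alpha=\bar\alpha$, $\beta=\bar\beta$) ramified along the discriminant $P^2-4Q=(\alpha^2\beta^2-6\alpha\beta-4\alpha-4\beta-3)(\alpha\beta-2\alpha-2\beta+3)^2$. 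So the first step is to produce explicit matrices $A,B\in\SL(3,\CC)$ realizing a generic character in $W_0$, written over the function field $\QQ(\alpha,\beta)[\sqrt{P^2-4Q}]$ (equivalently in terms of $\eta$, which satisfies $\eta^2-P\eta+Q=0$). Unlike the $V_2$ case one cannot diagonalize $B$ over the base field, since the eigenvalues of $B$ are no longer the fixed roots $1,i,-i$; instead I would fix a normal form for the pair $(A,B)$ adapted to Lawton's coordinates, solving the trace relations $\tr(A)=\tr(AB)$, $\tr(B)=\tr(BA^{-1})$, etc.\ together with $\det A=\det B=1$ to pin down the entries.

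Once $A$ and $B$ are fixed, the next step is to solve the linear system $TA T^{-1}=AB$, $TBT^{-1}=BAB$ coming from Presentation~\eqref{eq:pres-fiber} for a matrix $T_0$ spanning the (one-dimensional, by $F_2$-irreducibility and Lemma~\ref{lemma:h1inj}) space of conjugating matrices. Then I would normalize $T=d_0^{-1/3}T_0$ with $d_0=\det T_0$ and compute the $\mu_3$-invariant trace functions $y=\tr(T)$, $\bar y=\tr(T^{-1})$, $z=\tr(TA^{-1}TA)$, $\bar z=\tr(A^{-1}T^{-1}AT^{-1})$. The eight displayed equations are then obtained by eliminating the auxiliary algebraic quantities $\eta=\sqrt{\cdots}$ and the cube-root ambiguity $d_0^{1/3}$: products and symmetric combinations such as $y\bar y$, $y^3+\bar y^3$, $z\bar z$, $z^3+\bar z^3$, $yz+\bar y\bar z$ are precisely the $\mu_3$-invariant and $\sqrt{\cdot}$-rational expressions that descend to polynomials in $\alpha,\beta$. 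This is exactly the mechanism seen in Proposition~\ref{prop:eta-non-distinguished-V2}, where $y\bar y$, $y^3$, $\bar y^3$ and $\eta$ came out as $\frac12(\cdots)\pm\frac12\sqrt\Delta$; here the same bookkeeping must be done for the additional mixed invariants involving $z,\bar z$.

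For the intersection statements I would argue as follows. For $V_0\cap X_\mathrm{TR}$, substitute $\alpha=\bar\alpha=\beta=\bar\beta=3$ into the eight equations and verify the solution set is the three points $\mu_3\cdot(4,4,8,8,3,3,3,3)$; smoothness on each component follows from Proposition~\ref{prop:smooth} (for $V_0$) and the explicit $\CC^2$-structure of $X_\mathrm{TR}$ in Proposition~\ref{prop:SL3-reducibles}. For $V_0\cap X_\mathrm{PR}$, Lemma~\ref{lemma:intersect} identifies $\res(X_{red})$ with the curve $\alpha\beta-2\alpha-2\beta+3=0$ in $W_0$, and Corollary~\ref{cor:cardinality_fibre} already computes the fibre cardinalities of $\res$ over this curve: six generically, three over $(\alpha,\beta)=(1,1),(3,3)$ and $\left(\frac{1\pm\sqrt5}{2},\frac{1\mp\sqrt5}{2}\right)$. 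The $3$-to-$1$ cover $V_0\to W_0$ then gives a six-to-one map onto the curve away from the ramification, so I only need to read off the $\mu_3$-orbits of $(y,\bar y,z,\bar z)$ at the two special fibres $(1,1)$ and $\left(\frac{1\pm\sqrt5}{2},\frac{1\mp\sqrt5}{2}\right)$ by plugging back into the equations, noting that $(3,3)$ lands in $X_\mathrm{TR}$ and is excluded by $\alpha\neq3$.

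The main obstacle I expect is computational rather than conceptual: finding a workable normal form for $(A,B)$ over $W_0$. Because $B$ cannot be diagonalized over the ground field and $\alpha=\bar\alpha$, $\beta=\bar\beta$ only cut out the symmetric stratum, the entries of $A,B,T_0$ will be large expressions in $\alpha,\beta,\sqrt{P^2-4Q}$, and the elimination producing the clean polynomial right-hand sides (especially the degree-six relation $z^3+\bar z^3$ and the mixed relations $\bar y^2z+y^2\bar z$, $\bar yz^2+y\bar z^2$) is exactly where a symbolic algebra system is indispensable; I would carry out this step in Sage or Mathematica as in \cite{fig8html}. A secondary subtlety is checking that the eight equations genuinely cut out $V_0$ and not a larger scheme — but this is precisely the content deferred to Remark~\ref{rem:radical} and Proposition~\ref{prop:smooth}, so here it suffices to exhibit the parametrization and verify the listed relations hold identically on it.
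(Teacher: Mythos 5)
Your constructive plan coincides with the paper's proof of Proposition~\ref{prop:distinguishedComp} step for step: since $\beta=\bar\beta$ forces $1$ to be an eigenvalue of $B$, the paper fixes the slice $B=\bigl(\begin{smallmatrix} 1&0&0\\ 0&\beta-1&1\\ 0&-1&0 \end{smallmatrix}\bigr)$, solves the trace relations for $A$ over $\QQ[\alpha,\beta][F^{-1},\sqrt{\Delta}]$ with $F=\alpha\beta-2\alpha-2\beta+3$ inverted (so the square factor of the discriminant of Lemma~\ref{lem:Wj} is discarded and only $\Delta=\alpha^2\beta^2-6\alpha\beta-4\alpha-4\beta-3$ enters the square root), computes the spanning matrix $T_0$ of the conjugating space, normalizes $T=\varpi d_0^{-1/3}T_0$, and obtains exactly the $\pm\sqrt\Delta$-paired expressions you predict for $y\bar y$, $y^3$, $\bar y^3$, $\eta$, $z^3$, $\bar z^3$, $z\bar z$, $yz$, $\bar y\bar z$, with degenerate loci ($\beta=3,-1,\alpha$, vanishing entries of $A$, $d_0=0$) handled by continuity as in the $V_2$ case. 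Your treatment of the intersections with $X_\mathrm{TR}$ and $X_\mathrm{PR}$ via Lemma~\ref{lemma:intersect} and Corollary~\ref{cor:fibre} is likewise the paper's.

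The genuine gap is in your last paragraph. The proposition (see Theorem~\ref{thm:main}(3)) asserts that $V_0$ \emph{is the zero set} of the ideal generated by the eight displayed equations; verifying the relations on the parametrization only gives the inclusion $V_0\subseteq Z(I)$, and your proposed deferral of the converse does not work. Remark~\ref{rem:radical} computes the radical of this ideal \emph{given} that its zero locus is already known to be $V_0$ (that is how the radical gets identified with the vanishing ideal $I(V_0)$), and Proposition~\ref{prop:smooth} only controls the local structure of $V_0$ at its own points; neither rules out extra components or stray solutions of the eight equations in $\CC^8$. The paper closes this inside the proof with an invariant-theoretic step you omit: the covering $V_0\to\CC^2$, $\chi\mapsto(\chi(a),\chi(b))$, is regular with deck group $\mathcal{D}=\langle \mu_3,\iota\rangle$, where $\iota$ is the duality $\chi\mapsto\chi(\,\cdot\,^{-1})$ acting by $(y,\bar y,z,\bar z)\mapsto(\bar y,y,\bar z,z)$; the seven functions $f_1=y\bar y$, $f_2=z\bar z$, $f_3=y^3+\bar y^3$, $f_4=z^3+\bar z^3$, $h_1=yz+\bar y\bar z$, $h_2=\bar y^2z+y^2\bar z$, $h_3=\bar yz^2+y\bar z^2$ together with $\alpha,\beta$ \emph{generate} the ring of $\mathcal{D}$-invariants (they are not merely convenient $\sqrt\Delta$-free combinations), the image of the induced map $q$ on $V_0$ is the graph in $\CC^7\times\CC^2$ of the stated polynomials in $(\alpha,\beta)$, hence a copy of $\CC^2$, and since invariants of a finite group separate orbits and $V_0$ is $\mathcal{D}$-stable, $V_0=q^{-1}(V_0/\mathcal{D})$ is precisely the common zero locus of the eight equations. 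Without this argument, or an equivalent elimination certifying that every solution of the equations arises from the parametrization, your write-up establishes the parametrization and the intersection pattern but not the defining-equation claim.
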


\begin{proof}
By Lemma \ref{lem:Wj}, $W_0$ is described as the double cover of the plane $(\alpha,\beta)$
ramified over the curve  $\Delta'=0$, where 
 $$
 \Delta'= (  \alpha^2 \beta^2-6 \alpha\beta-4 \alpha-4 \beta-3)
   ( \alpha\beta-2 \alpha-2 \beta+3 )^2.
 $$ 
The locus $F=\alpha\beta-2 \alpha-2 \beta+3=0$ corresponds to reducible representations.
Therefore, the ring of functions of $W_0\cap\res(X_{irr}(\Gamma,G))$ is $\QQ[\alpha,\beta][F^{-1},\sqrt{\Delta}]$,
where 
 $$
 \Delta=  \alpha^2 \beta^2-6 \beta \alpha-4 \alpha-4 \beta-3.
 $$
We have $\beta=\bar\beta$, so the matrix $B$ has one eigenvalue equal to $1$. A slice of
the set of such matrices is defined by 
 $$
 B=\left(
\begin{array}{ccc}
 1 & 0 & 0 \\
 0 & \beta-1 & 1 \\
 0 & -1 & 0 \\
\end{array}
\right).
 $$
Assume that the matrix $A$ has non-zero entries $(2,1)$ and $(3,1)$. Rescaling the basis vectors, we can write
 $$
A=\left(
\begin{array}{ccc}
 a & b & c \\
 1 & d & e \\
 1 & f & g 
\end{array}
\right).
$$
Solving the equations $\alpha=\tr (A)=\tr (A^{-1})$, 
$\tr(A)=\tr(AB)$, $\tr(B)=\tr(BA^{-1})$, $\tr (AB^{-1})=\tr(B^{-1})$,
$\tr(A^{-1})=\tr(A^{-1}B^{-1})$ and $\det(A)=1$, we get
\cite{fig8html}
{\small $$
A=\left(
\begin{array}{ccc}
 \hspace{-2mm} \frac{\alpha \beta-2\alpha-\beta}{\beta-3} \hspace{-4mm}& \frac{2 (\alpha \beta-2\alpha-2 \beta+3) (\beta-\alpha)}{(\beta-3)^2 (\beta+1)} &
 \frac{ (\alpha \beta-2\alpha-2 \beta+3) (\beta-\alpha) (\beta-1)}{(\beta-3)^2 (\beta+1)} \\ 
 1 & \frac{4 \alpha^2-\alpha \beta^3+4 \alpha\beta^2-9 \alpha\beta-6 \alpha+7 \beta^2-6 \beta-9
+ (\beta-3) (\beta+1) \sqrt{\Delta}}{2 (\beta-3) (\beta+1) (\beta-\alpha)} \hspace{-3mm} &
\frac{ 4 \alpha \beta^3-  \alpha \beta^4 + 5 \beta^3   - 5 \alpha \beta^2+ 2 \alpha^2 \beta - 8 \beta^2-2 \alpha^2- 2 \alpha \beta  - 9 \beta 
+(\beta-3) (\beta+1)(\beta -2)\sqrt{\Delta}}{2 (\beta-3) (\beta+1) (\beta-\alpha)} \hspace{-2mm}\\
 1 & \frac{4 \alpha^2-\alpha \beta^3+4 \alpha\beta^2-9 \alpha\beta-6 \alpha+7 \beta^2-6 \beta-9
-  (\beta-3) (\beta+1) \sqrt{\Delta}}{2 (\beta-3) (\beta+1) (\beta-\alpha)} \hspace{-4mm}&
 \frac{ \alpha \beta^3 + 2 \beta^3  
- 8 \alpha \beta^2 + 2 \alpha^2 \beta - 5 \beta^2- 2 \alpha^2 + 5 \alpha \beta+ 6 \alpha + 6 \beta+9 
- (\beta-3) (\beta+1)\sqrt{\Delta}}{2(\beta-3) (\beta+1) (\beta-\alpha)} 
\end{array}
\right)
 $$
}
These matrices are well defined for $\beta\neq 3, -1,\alpha$.

Now, solving the equations $TA=ABT$ and $TB=BABT$, we get a one-dimensional space of matrices $T$ spanned by
{\tiny $$
T_0= \left(
\begin{array}{ccc}
   \begin{minipage}{5cm}{$ 
-(\alpha-\beta) (\beta-3)^2 \beta (\beta+1)^2 
 (2 \alpha \beta-\beta^2 - 4\alpha +2\beta-3) \sqrt{\Delta}  +
 (\alpha-\beta) (\beta-3) (\beta+1)^2 (2 \alpha \beta-\beta^2 - 4\alpha +2\beta-3) 
 (3 \beta^2+\beta-\alpha \beta^3+3\alpha  \beta^2-8\alpha-6)$ } \end{minipage}
&  \begin{minipage}{5cm}{$(\alpha\beta-2\alpha-2 \beta+3) (\alpha-\beta) (\beta-3) (\beta+1)
(\alpha\beta^3-\alpha\beta^2-2\alpha\beta+4\alpha-\beta^3-2\beta^2-2\beta+3) \sqrt{\Delta}+
(\alpha\beta-2\alpha-2 \beta+3) (\alpha-\beta) (\beta+1) 
(\alpha^2\beta^5- \alpha \beta^5-4\alpha^2\beta^4-2\alpha\beta^4+  \beta^4
+\alpha^2\beta^3+14\alpha\beta^3+13\beta^3+10\alpha^2\beta^2-10\alpha\beta^2-10\beta^2+
4\alpha^2\beta-17\alpha\beta-15\beta-16\alpha^2-24\alpha-9)$ } \end{minipage}&  
   \begin{minipage}{5cm}{$
 -(\alpha \beta-2\alpha-2 \beta+3) (\alpha-\beta) (\beta-3) (\beta+1) (\beta^4-\alpha\beta^3-2\beta^3
   +\alpha\beta^2-\beta^2+2 \alpha \beta+\beta+4 \alpha+3)\sqrt{\Delta}  
+   (\alpha \beta-2\alpha-2 \beta+3) (\alpha-\beta) (\beta+1)
     (-\alpha \beta^6+\alpha^2\beta^5 +5\alpha\beta^5+3 \beta^5
   -4 \alpha^2\beta^4-8 \alpha\beta^4-3 \beta^4+\alpha^2\beta^3-18 \alpha\beta^3-17 \beta^3
   +18 \alpha^2\beta^2+53\alpha\beta^2+26 \beta^2-20 \alpha^2\beta+\alpha\beta+12 \beta
   -16\alpha^2-24\alpha-9)
    $ } \vspace{0.5mm}\end{minipage} \\[7pt]
 \begin{minipage}{5cm}{$ (\beta-3)^2 (\beta+1)^2
 (8 \alpha+ \beta^4-3\beta^3-3\beta^2-\beta+6) \sqrt{\Delta}  
 +(\beta-3)^2 (\beta+1)^2 \beta (-8 \alpha^2+
 \alpha\beta^4-3\alpha \beta^3-3\alpha\beta^2+23\alpha\beta+6\alpha
 -3\beta^3 -5\beta^2+15 \beta+9)  $ } \end{minipage} &  
 \begin{minipage}{5cm}{$ (\beta-3) (\beta+1) 
 (4 \beta^5-16 \beta^4+5 \beta^3+8 \beta^2-15 \beta+
  \alpha^2\beta^5-8 \alpha^2\beta^4+21 \alpha^2\beta^3-14 \alpha^2\beta^2-28\alpha^2 \beta+32\alpha^2 
  -\alpha \beta^6+7 \alpha \beta^5-20\alpha  \beta^4+30 \alpha \beta^3+\alpha \beta^2
  -41\alpha  \beta+48\alpha+18)\sqrt{\Delta} 
    +(\beta-3)(\beta+1) \beta  (2 \beta^5-22 \beta^4+28 \beta^3+9 \beta^2
   +\alpha^3\beta^5-8\alpha^3 \beta^4+21 \alpha^3\beta^3-14\alpha^3 \beta^2-12\alpha^3\beta+16)
   -\alpha^2\beta^6+\alpha^27 \beta^5-23\alpha^2 \beta^4+52\alpha^2 \beta^3
   -70\alpha^2 \beta^2-33 \alpha^2\beta+72\alpha^2
   +7 \alpha \beta^5-37\alpha  \beta^4+73 \alpha \beta^3-30 \alpha \beta^2-18 \alpha \beta+81\alpha 
   +27) $ } \end{minipage}& 
   \begin{minipage}{5cm}{$(\beta-3) (\beta+1) (\beta^6-8 \beta^5+15 \beta^4+3 \beta^3-25 \beta^2+3 \beta
   - \alpha^2\beta^5+8\alpha^2 \beta^4-21 \alpha^2\beta^3+6\alpha^2 \beta^2+36\alpha^2 \beta-16\alpha^2
   +\alpha \beta^6-9\alpha  \beta^5+28\alpha  \beta^4-18\alpha  \beta^3-33\alpha  \beta^2
   +31\alpha  \beta-24\alpha -9)  \sqrt{\Delta}     
   +(\beta-3) (\beta+1) (-7 \beta^6+26 \beta^5-23 \beta^4-25 \beta^3-3 \beta^2
   -\alpha^3 \beta^6+8\alpha^3 \beta^5-21 \alpha^3\beta^4+22 \alpha^3\beta^3+4\alpha^3
   \beta^2-32\alpha^3\beta+9 \beta
   +\alpha \beta^7-11 \alpha \beta^6+54\alpha  \beta^5-73\alpha  \beta^4-15\alpha  \beta^3
   +22\alpha  \beta^2+18 \alpha \beta+72\alpha +\alpha^2 \beta^7-9 \alpha^2 \beta^6
   +31\alpha^2  \beta^5-64 \alpha^2 \beta^4+46 \alpha^2 \beta^3+39 \alpha^2 \beta^2-16 \alpha^2 \beta
   +48\alpha^2 +27)$ \vspace{1mm}} \end{minipage} \\[7pt]
 \begin{minipage}{5cm}{$   (\beta-3)^2  (\beta+1)^2
 (2 \beta^3-\beta^2-8 \beta- \alpha\beta^4+4\alpha \beta^3-\alpha\beta^2-10\alpha \beta+4\alpha+3)
  \sqrt{\Delta}
 +(\beta-3)^2(\beta+1)^2 (2 \beta^4-8 \beta^3-5 \beta^2+6 \beta+5\alpha  \beta^4-11\alpha  \beta^3
 -13 \alpha \beta^2+11\alpha  \beta+24\alpha - \alpha^2\beta^5+4\alpha^2 \beta^4-\alpha^2\beta^3
 -10\alpha^2 \beta^2-4\alpha^2 \beta-16\alpha^2+9)  $ } \end{minipage}& 
   \begin{minipage}{5cm}{$(\beta-3) (\beta+1) 
    (3 \beta^5-14 \beta^4+22 \beta^3+7 \beta^2-39 \beta+ 2 \alpha^2\beta^5
   -13\alpha^2 \beta^4+26 \alpha^2\beta^3-5 \alpha^2\beta^2-34 \alpha^2\beta+28\alpha^2
   -\alpha \beta^6+7 \alpha \beta^5-25\alpha  \beta^4+44 \alpha \beta^3-3 \alpha \beta^2
   -79\alpha \beta+33\alpha +9) \sqrt{\Delta}  
   +(\beta-3) (\beta+1) (2 \beta^6-17 \beta^5+40 \beta^4-20 \beta^3-63 \beta^2+27 \beta
   + 2\alpha^3 \beta^6-13\alpha^3 \beta^5+26 \alpha^3\beta^4-5 \alpha^3\beta^3-34\alpha^3 \beta^2
   +12 \alpha^3\beta+16\alpha^3
   -\alpha^2 \beta^7+7\alpha^2 \beta^6-31\alpha^2 \beta^5+79\alpha^2 \beta^4
   -55\alpha^2 \beta^3-92 \alpha^2\beta^2+105 \alpha^2\beta+72\alpha^2   
   +6\alpha  \beta^6-37\alpha  \beta^5+97\alpha  \beta^4-73\alpha  \beta^3-138\alpha  \beta^2+108
   \alpha \beta+81\alpha +27) $ } \end{minipage}&  
    \begin{minipage}{5cm}{$(\beta-3) (\beta+1) 
       (\alpha^2\beta^6-7\alpha^2\beta^5+16\alpha^2\beta^4-9\alpha^2\beta^3-11\alpha^2\beta^2
   +18\alpha^2\beta-20 \alpha^2
        - \alpha\beta^7+7\alpha\beta^6-18\alpha\beta^5+17\alpha\beta^4
       +3\alpha\beta^3-13\alpha\beta^2+20\alpha\beta-39\alpha
       +3 \beta^6-13\beta^5+8\beta^4+17\beta^3+\beta^2+6\beta-18)\sqrt{\Delta}  
      +(\beta-3) (\beta+1) 
      (\alpha^3\beta^7-7\alpha^3\beta^6+16\alpha^3\beta^5-9\alpha^3\beta^4
      -11\alpha^3\beta^3+2\alpha^3\beta^2+28\alpha^3\beta-48\alpha^3
      -\alpha^2\beta^8+7\alpha^2\beta^7-21\alpha^2\beta^6+36\alpha^2\beta^5
      -31\alpha^2\beta^4+13\alpha^2\beta^3-14\alpha^2\beta^2-23\alpha^2\beta+24\alpha^2
   +6 \alpha\beta^7-34\alpha\beta^6+56\alpha\beta^5-6\alpha\beta^4-16\alpha\beta^3-86\alpha\beta^2
   -72\alpha\beta+9\alpha    
   +2 \beta^7-17\beta^6+29\beta^5+16\beta^4-13\beta^3-33\beta^2-36\beta) 
      $ } \end{minipage} 
\end{array}
\right)
$$
}

The choices $T\in \SL(3,\CC)$ are of the form $\varpi d_0^{-1/3} T_0$, $\varpi\in \mu_3$,
$d_0=\det (T_0)$. Therefore, the coordinates $y= \tr (T)$, $\bar{y}=\tr (T^{-1})$ ,
$z=\tr(TA^{-1} TA)$ and $\bar z=\tr(A^{-1} T^{-1} AT^{-1})$
can be computed using this explicit parametrization.
Noting that only quantities that are $\mu_3$-invariant can be computed, we 
get by explicit calculation \cite{fig8html}:
%are given as 
%$y=y_0d_0^{-1/3}$, $\bar{y}= \bar{y}_0 d_0^{-2/3}$.
%A computer calculation gives
% \begin{align*}
% y  \bar{y} &= (\alpha+1)(\beta+1), \\
% y^3 + \bar{y}^3 &= \alpha^2\beta+\alpha\beta^2+ 6 \alpha\beta+3 \alpha+ 3 \beta+2 .
% \end{align*}
%This gives a six-to-one covering of the plane $(\alpha,\beta)$, 
%which is a triple covering over the double covering defined by $(\alpha,\beta,\eta)$.
%
%There is an action of $\mu_3$ given by $(y,\bar{y})\mapsto (\varpi y,\varpi^2 \bar{y})$, and an action
%of $\ZZ_2$ given by $(y,\bar{y})\mapsto (\bar{y},y)$. The ramification locus of the former action is $y=\bar{y}=0$, i.e. $y^3+\bar{y}^3=y  \bar{y}=0$. This
%corresponds to $\alpha=\beta=-1$. The ramification locus of the latter is given
%by $y=\bar{y}$, that is $(y^3+\bar{y}^3)^2-4 (y  \bar{y})^3=0$. This is rewritten as
%  $$
%   (\alpha-\beta)^2\Delta=0.
%  $$ 
%Therefore for $(\alpha,\beta)$ with $\alpha\neq \beta$, there are six values
%for $(y,\bar{y})$ unless $\Delta=0$, in which case there are three values of $(y,\bar{y})$. This means
%that $(\alpha,\beta, y, \bar{y})$ parametrizes such set. For $\alpha=\beta$ however, there
%are three values of $(y,\bar{y})$ (unless $\alpha=\beta=-1$ in which case we only have $(y,\bar{y})=(0,0)$).
%So $(\alpha,\beta,y,\bar{y})$ does not parametrize the character variety on this subset, and we
%have to consider $(\alpha,\beta,\eta, y,\bar{y})$. 
%But then there must be an extra relation between $\eta,y,\bar{y}$. This is found by an
%explicit computation:
 \begin{align}\label{eq:distinguishedComp}
  y  \bar{y} &= (\alpha+1)(\beta+1), \notag\\
  2y^3 &=(\alpha^2\beta+\alpha\beta^2+ 6 \alpha\beta+3 \alpha+ 3 \beta+2)- 
  (\alpha-\beta)  \sqrt\Delta ,\notag\\
 2 \bar{y}^3 &=(\alpha^2\beta+\alpha\beta^2+ 6 \alpha\beta+3 \alpha+ 3 \beta+2) + 
  (\alpha-\beta)  \sqrt\Delta , \notag\\
  2\eta&= \tr([A,B])= (\alpha^2\beta^2-2\alpha^2\beta-2\beta^2\alpha+2\alpha^2+2\beta^2-3)
  -  (\alpha\beta-2\alpha-2\beta+3 )\sqrt{\Delta}\, , \notag\\
 2z^3 &=\alpha^4 \beta^2  + 10 \alpha^2\beta+ 9\alpha^2- 2 \alpha^3 -2
  -(-4 \alpha+3 \alpha^2+\alpha^3\beta) \sqrt{\Delta} ,\\
 2\bar z^3 &=\alpha^4 \beta^2  + 10 \alpha^2\beta+ 9\alpha^2- 2 \alpha^3 -2
  +(-4 \alpha+3 \alpha^2+\alpha^3\beta) \sqrt{\Delta} ,\notag\\
 z\bar z &= 1+\alpha^2+2\alpha^2\beta, \notag\\
 2yz &= \alpha^2\beta+3\alpha\beta+ 3\alpha+1 + (1-\alpha)\sqrt{\Delta}, \notag\\
 2\bar{y}\bar z &= \alpha^2\beta+3\alpha\beta+ 3\alpha+1- (1-\alpha)\sqrt{\Delta}.\notag
   \end{align}
This produces the equations in the statement. 
%It is easy to see that these 
%equations define a smooth variety, so the ideal they define is radical, hence
%it is the full ideal.   
%Also
% \begin{align*}
% 2(\alpha+1)(\beta+1) z &= (  \alpha^2\beta+3\alpha\beta+ 3\alpha+1 + (1-\alpha)\sqrt{\Delta}) \bar{y}  \\
% 2(\alpha+1)(\beta+1) \bar{z} &= (  \alpha^2\beta+3\alpha\beta+ 3\alpha+1 - (1-\alpha)\sqrt{\Delta})  y,
% \end{align*}
%from where $yz=\bar{y}\bar{z}$.

Now notice that the covering $V_0\to\CC^2$ given by 
$\chi  \mapsto  (\chi(a),\chi(b))$ is regular. More precisely, the group of 
deck transformations $\mathcal{D}$ is generated by $\mu_3$ and $\iota$, where
$$\iota\co X(\Gamma,\SL(3,\CC))\to X(\Gamma,\SL(3,\CC))$$ is  given by
$\iota(\chi)(\gamma) = \chi(\gamma^{-1})$. If $\chi=\chi_\rho$ is the character of the 
representation $\rho\co\Gamma\to\SL(3,\CC)$ then $\iota(\chi)=\chi_{\rho^*}\co\Gamma\to\CC$ is the
character of the dual representation $\rho^*\co\Gamma\to\SL(3,\CC)$ given by
\[ \forall \gamma\in\Gamma,\quad
\rho^* (\gamma) = (\rho(\gamma)^{-1})^\mathrm{t},
\]
where $A^\mathrm{t}$ denotes the transpose matrix for $A\in\SL(3,\CC)$.
In the coordinates $(y,\bar{y},z,\bar{z},\alpha,\beta)$ the action of $\varpi\in\mu_3$ and $\iota$ is given by
\[ \varpi (y,\bar{y},z,\bar{z},\alpha,\beta) = (\varpi y, \varpi^2\bar{y},\varpi^2z,\varpi\bar{z},\alpha,\beta) \quad
\text{ and } \quad
\iota(y,\bar{y},z,\bar{z},\alpha,\beta)= (\bar{y},y,\bar{z},z,\alpha,\beta)\,.\]
The ring of invariant functions of this action is generated by $\alpha$, $\beta$ and the following functions:
% \comment{M: I calculated this with \emph{Magma}. We should cite it. What do you think? -- It is not  obvious.}
 \[ 
 f_1 = y\bar{y} , \  f_2 = z\bar{z} , \  f_3= y^3+\bar{y}^3, \  f_4 = z^3 +\bar{z}^3, \  
h_1 = yz+\bar{y}\bar{z} , \  h_2 = \bar{y}^2z + y^2\bar{z}, \  h_3 = \bar{y}z^2 +  y\bar{z}^2\,.
\]
Therefore the quotient $V_0/\mathcal{D}$ embeds into $\CC^7\times\CC^2$ and it follows from equations \eqref{eq:distinguishedComp} that the image $q\co V_0\to\CC^7\times\CC^2$ is isomorphic  to $\CC^2$ and given by
\begin{multline*}
  f_1= (\alpha+1)(\beta+1), \
       f_2= 2\alpha^2\beta+\alpha^2+1, \
    f_3 = \alpha^2\beta+\alpha\beta^2+ 6 \alpha\beta+3 \alpha+ 3 \beta+2, \\
 f_4 = \alpha^4 \beta^2  + 10 \alpha^2\beta+ 9\alpha^2 - 2 \alpha^3 -2  ,\\
 h_1 = \alpha^2\beta+3\alpha\beta+ 3\alpha+1, 
  h_2 =  \alpha^{2} \beta^{2}  + 4
 \alpha^{2} \beta + 2 \alpha^{2} + 4 \alpha \beta + 2 \alpha + 2 \beta + 1,\\
 h_3= \alpha^{3} \beta^{2} +  \alpha^{3} \beta +  4 \alpha^{2} \beta 
 + 3 \alpha^{2} + 5 \alpha \beta + 3\alpha + 1\,.
\end{multline*}
Here $(f_1,f_2,f_3,f_4,h_1,h_2,h_3,\alpha,\beta)$ are the coordinates of $\CC^7\times\CC^2$.
Finally, $V_0=q^{-1}(V_0/\mathcal{D})$ is the zero-locus of the equations stated in the proposition.

The intersection of $V_0$ with the reducible locus is as follows:
\begin{itemize}
\item With the totally reducible representations (i.e.,\ $\alpha=\beta=3$), it is given by
$(\eta,y,\bar{y})=(3,4,4)$, $(3,4\varpi,4\varpi^2)$, $(3,4\varpi^2,4\varpi)$, $\varpi=\vvarpi$.\\
\item With the partially reducible representations (i.e.,\ $\alpha\beta-2 \alpha-2 \beta+3=0$, with $(\alpha,\beta)\neq (3,3)$), 
it is given by six points over each $(\alpha,\beta)$, if $\alpha\neq 1,3,\frac{1\pm\sqrt 5}{2}$, 
defined by the six solutions for $(y,\bar{y})$,
cf.\ Corollary~\ref{cor:fibre}.
For $(\alpha,\beta)=\left( \frac{1\pm\sqrt 5}{2},  \frac{1\mp\sqrt 5}{2} \right)$,
there are only three values $(\eta,y,\bar{y})$, where $y^3=\bar{y}^3=-1$ 
and $y \bar{y}=1$, $\eta=3$.
For $\alpha=\beta=1$, we have $\eta=-1$, 
$(y,\bar{y})=(2,2),(2\varpi,2\varpi^2),(2\varpi^2,2\varpi)$. These are
the same three points as in $V_j \cap X_\mathrm{PR}$, $j=1,2$.
\end{itemize}
\end{proof}

\begin{remark}\label{rem:radical}
We know by  Proposition~\ref{prop:smooth} that the component $V_0$ is scheme reduced.
 On the other hand, it is easy to see that the 
ideal generated by the equations in the statement of Proposition~\ref{prop:distinguishedComp} is not radical reduced i.e.\ 
the scheme is not reduced. 
Now computer supported calculations \cite{fig8html} produce %in less than a minute 
generators of the corresponding radical, i.e.\ generators of the vanishing ideal $I(V_0)$.
More precisely, $I(V_0)\subset\CC[y,\bar y,z,\bar z,\alpha,\beta]$ 
is generated by the following 18 polynomials:
\begin{gather*}
 2 y z + 2 \bar{y} \bar{z} -  z \bar{z} + \alpha^{2} - 6\alpha \beta - 6 \alpha - 1 ,\quad
  y \bar{y} -  \alpha \beta -  \alpha -  \beta - 1 ,\quad
  2 \alpha^{2} \beta -  z \bar{z} + \alpha^{2} + 1 ,\\
  \bar{y} \alpha \beta + y^{2} -  y \bar{z} + \bar{y} \alpha-  z \beta -  z ,\quad
  y \alpha \beta + \bar{y}^{2} -  \bar{y} z + y \alpha - \bar{z} \beta -  \bar{z} ,\\
  y \bar{z} \alpha -  \bar{y} \alpha^{2} -  \bar{z}^{2} + 3\bar{y} \alpha -  z \alpha -  z ,\quad
  \bar{y} z \alpha -  y \alpha^{2} -  z^{2} + 3 y \alpha - \bar{z} \alpha -  \bar{z} ,\\
  \bar{y}^{2} \alpha -  \bar{y} z + y \alpha -  \bar{z} \alpha + y -  \bar{z} ,\quad
  y^{2} \alpha -  y \bar{z} + \bar{y} \alpha -  z \alpha +
\bar{y} -  z ,\\
  2 \bar{y} \bar{z}^{2} -  z \bar{z}^{2} + 2 y \alpha^{2} +
\bar{z} \alpha^{2} - 2 \bar{z} \alpha \beta - 4 \bar{y} z +
4 z^{2} - 8 y \alpha - 2 \bar{z} \alpha + 6 y -  \bar{z} ,\\
  \bar{y} z \bar{z} -  \bar{y} \alpha^{2} - 2 z \alpha \beta
+ 2 y \bar{z} - 2 \bar{z}^{2} + 4 \bar{y} \alpha - 2 z
\alpha - 3 \bar{y} ,\\
  \bar{y}^{2} \bar{z} + 3 y^{2} - 2 y \bar{z} - 
\bar{z}^{2} + 2 \bar{y} \alpha - 2 z \beta - 2 \bar{y} - 2 z
,\\
  4 \bar{y}^{2} z + 4 y^{2} \bar{z} - 2 z \bar{z} \beta - 7
z \bar{z} -  \alpha^{2} - 16 \alpha \beta - 8 \alpha - 6 \beta + 3
,\\
  \bar{y}^{3} + y^{2} \bar{z} -  \bar{y} \bar{z} \beta +
\alpha \beta^{2} -  \bar{y} \bar{z} -  z \bar{z} - 2 \alpha
\beta -  \alpha - 2 \beta ,\\
  2 y^{3} - 2 y^{2} \bar{z} + 2 \bar{y} \bar{z} \beta - 4
\alpha \beta^{2} + 2 \bar{y} \bar{z} + z \bar{z} +
\alpha^{2} - 8 \alpha \beta - 4 \alpha - 2 \beta - 3 ,\\
  2 z \bar{z} \alpha \beta - 4 \bar{y} z^{2} - 4 y
\bar{z}^{2} + z \bar{z} \alpha -  \alpha^{3} + 8 z \bar{z} +
4 \alpha^{2} + 18 \alpha \beta + 11 \alpha - 12 ,\\
  z^{2} \bar{z}^{2} - 2 z \bar{z} \alpha^{2} + \alpha^{4} - 4
z^{3} - 4 \bar{z}^{3} - 8 \alpha^{3} + 18 z \bar{z} + 18
\alpha^{2} - 27 ,\\
  4 y^{2} \bar{z}^{2} - 2 z \bar{z}^{2} \beta - 3 z
\bar{z}^{2} -  \bar{z} \alpha^{2} + 8 z^{2} \beta + 36
\bar{y}^{2} - 32 \bar{y} z + 4 z^{2} + 16 y \alpha - 30
\bar{z} \beta - 29 \bar{z}\,.
\end{gather*}
\end{remark}

\begin{remark}
By Proposition~\ref{prop:smooth}, the only singular points of $V_0$ are
$\mu_3 \cdot (2,2,2,2,1,1,1,1)$. Using equations (\ref{eq:distinguishedComp}),
we can parametrize $(y,\bar{y},z,\bar{z})$ locally around $\alpha=2,\beta=2$.
There are two branches, depending on the choice of sign for $\sqrt{\Delta}$. 
Both branches are smooth and intersect transversely at the point. So the
singularities are simple nodes.
\end{remark}

We have also characterized the partially reducible  representations 
and the totally reducible representations that can deform into
irreducible ones (see also the description in Lemma~\ref{lemm:redtoirreduc}). 

\begin{remark}
In Proposition \ref{prop:parameters}, we have said that $V_0$ can be described
pointwise by the parameters $(\alpha,\beta,\eta, y,\bar{y})$. The equations are
\cite{fig8html}:
  \begin{align*}
  &  \eta^2 - P \eta + Q =0, \ \text{where $P,Q$ are given in Lemma \ref{lem:Wj},}\\
 &  y  \bar{y} = (\alpha+1)(\beta+1), \\
   & y^3 + \bar{y}^3 = \alpha^2\beta+\alpha\beta^2+ 6 \alpha\beta+3 \alpha+ 3 \beta+2, \\
   & (\alpha-\beta) (2\eta-(\alpha^2\beta^2-2\alpha^2\beta-2\beta^2\alpha+2\alpha^2+2\beta^2-3))= \\
    & \qquad \qquad \qquad \qquad =(\alpha\beta-2\alpha-2\beta+3 )
(2y^3-(\alpha^2\beta+\alpha\beta^2+ 6 \alpha\beta+3 \alpha+ 3 \beta+2)).
  \end{align*}
\end{remark}

%%%%%%%%%%%%%%%%%%%%%%%%%%%%%%%%%%%%%%%%%%%%%
\section{Character varieties for $\PGL(3,\CC)$ and $\GL(3,\CC)$} \label{sec:irredPGL3GL3}
%%%%%%%%%%%%%%%%%%%%%%%%%%%%%%%%%%%%%%%%%%%%%

We use the descriptions of the various components of the $\SL(3,\CC)$-character
variety given in Sections \ref{sec:reducible}, \ref{sec:nondist} and \ref{sec:distinguished}
to give a similar description for the character varieties for $\PGL(3,\CC)$ and
$\GL(3,\CC)$.

\begin{proposition}
 The character variety $X(\Gamma,\PGL(3,\CC))$ has five components.
  \begin{itemize}
  \item The component corresponding to totally reducible representations, isomorphic
  to $\CC^2/\mu_3$, $\mu_3$ acting by $(y,\bar{y})\mapsto (\varpi y ,\varpi^2 \bar{y})$.
  \item The component corresponding to partially reducible representations, isomorphic
  to $$\{(x_1,x_2,v,w) \mid x_1x_2=x_1+x_2, (x_1+x_2+1)w= v^2, w\neq 0\}/\mu_3,$$
  $\mu_3$ acting by $(v,w)\mapsto (\varpi v, \varpi^2 w)$.
  \item Three components consisting of irreducible representations, which
  are $W_0- \{\alpha\beta-2\alpha-2\beta+3=0 \} , W_1-\{\beta=\bar\beta=1\}, W_2-\{\alpha=\bar\alpha=1\}$ defined in Lemmas \ref{lem:Wj} and \ref{lemma:intersect}. 
 \end{itemize}
 \end{proposition}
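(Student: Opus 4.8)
The plan is to deduce the statement from Theorem~\ref{thm:main} together with the realization of $\PGL(3,\CC)$-characters as a finite quotient. First I would record that over $\CC$ one has $\PGL(3,\CC)=\PSL(3,\CC)$, since the projection $\SL(3,\CC)\to\PGL(3,\CC)$ is onto with kernel $\mu_3$ (every nonzero scalar has a cube root in $\CC$). Hence Lemma~\ref{lem:GL} applies and gives $X(\Gamma,\PGL(3,\CC))\cong X(\Gamma,\SL(3,\CC))/\mu_3$. The key observation is that the generator $\varpi\in\mu_3$ acts in the coordinates \eqref{eq:parameters} fixing $\alpha,\bar\alpha,\beta,\bar\beta$ (and also $\eta=\chi([a,b])$, as $[a,b]\in F_2$) and only rescaling $(y,\bar y,z,\bar z)\mapsto(\varpi y,\varpi^2\bar y,\varpi^2 z,\varpi\bar z)$. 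Since the five components $X_\mathrm{TR},X_\mathrm{PR},V_0,V_1,V_2$ of Theorem~\ref{thm:main} are distinguished by relations among $\alpha,\bar\alpha,\beta,\bar\beta$ alone (all equal to $3$; the curve $\alpha\beta-2\alpha-2\beta+3=0$; generic $\alpha=\bar\alpha$, $\beta=\bar\beta$; $\alpha=\bar\alpha=1$; $\beta=\bar\beta=1$), the action preserves each of them and permutes none. As the quotient of an irreducible variety by a finite group is irreducible, I obtain exactly five components downstairs, one over each upstairs, none of them identified since they retain their distinguishing $\mu_3$-invariant relations.

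Next I would treat the two reducible components. For $X_\mathrm{TR}\cong\CC^2$ with coordinates $(y,\bar y)$ the action is $(y,\bar y)\mapsto(\varpi y,\varpi^2\bar y)$, and one checks compatibility with $z=y^2-2\bar y$, $\bar z=\bar y^2-2y$; the quotient is $\CC^2/\mu_3$ as stated. For $X_\mathrm{PR}$ I would transport the action through the parametrization $\Phi\co\mathcal{P}\to X_\mathrm{PR}$ of Theorem~\ref{thm:main}: requiring $\Phi$ to intertwine the two actions forces $(v,w,x_1)\mapsto(\varpi v,\varpi^2 w,x_1)$, which is a one-line verification on each of the six defining formulas for $y,\bar y,z,\bar z$. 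Rewriting $\mathcal{P}$ in the coordinates $(x_1,x_2,v,w)$ with $x_2=x_1/(x_1-1)$ turns the defining relations into $x_1x_2=x_1+x_2$ and $(x_1+x_2+1)w=v^2$, $w\neq 0$, yielding the claimed quotient.

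Finally, for the three irreducible components I would invoke Proposition~\ref{prop:3components}, which exhibits $V_i$ as the $3$-to-$1$ cover of $W_i$ given precisely by forgetting $(y,\bar y,z,\bar z)$, i.e.\ by the $\mu_3$-action above (consistent with the fact that the $W_i$-coordinates are exactly the $\mu_3$-invariant functions on $\res$). Thus $V_i/\mu_3$ maps bijectively onto $W_i$, and on irreducible characters this identifies $V_i^{irr}/\mu_3$ with $W_i^{irr}$. It then remains to subtract the image of the reducible characters: by Lemma~\ref{lemma:intersect} this is the curve $\{\alpha\beta-2\alpha-2\beta+3=0\}$ inside $W_0$, while by Propositions~\ref{prop:eta-non-distinguished-V1} and~\ref{prop:eta-non-distinguished-V2} the only reducible characters in $V_1,V_2$ form the orbit $\mu_3\cdot(2,2,2,2,1,1,1,1)$, which collapses to the single point $\beta=\bar\beta=1$ of $W_1$ and $\alpha=\bar\alpha=1$ of $W_2$. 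Removing these loci gives the three components as stated. The main difficulty I anticipate is bookkeeping rather than conceptual: I must confirm that $W_1-\{\beta=\bar\beta=1\}$ and $W_2-\{\alpha=\bar\alpha=1\}$ really exhaust the reducible locus, and in particular that the five metabelian irreducible characters of Corollary~\ref{cor:redired} (which restrict to $F_2$-reducible characters, at the ramification $y=\bar y=0$) are correctly kept inside the irreducible components, being genuinely $\Gamma$-irreducible.
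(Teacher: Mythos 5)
Your proposal is correct and follows essentially the same route as the paper: both realize $X(\Gamma,\PGL(3,\CC))$ as $X(\Gamma,\SL(3,\CC))/\mu_3$ via Lemma~\ref{lem:GL}, take the quotient of the five components of Theorem~\ref{thm:main} one at a time (the $\mu_3$-action fixing $\alpha,\bar\alpha,\beta,\bar\beta,\eta$ and rescaling $y,\bar y,z,\bar z$), and identify the irreducible locus with the image of the restriction map to $X(F_2,\SL(3,\CC))$, i.e.\ the $W_i$ minus the reducible images from Lemmas~\ref{lem:Wj} and~\ref{lemma:intersect}. Your additional verifications --- the induced action $(v,w)\mapsto(\varpi v,\varpi^2 w)$ on the parametrization of $X_\mathrm{PR}$, and the check that the five metabelian characters of Corollary~\ref{cor:redired} stay inside the irreducible components --- merely make explicit details the paper's terse proof leaves implicit.
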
  

\begin{proof}
The $\PGL(3,\CC)$-character variety is obtained from the $\SL(3,\CC)$ by taking
the quotient by $\mu_3$ acting on the coordinates $y$, $\bar y$, $z$ and $\bar z$.
The locus of reducible representations is determined in Proposition \ref{prop:SL3-reducibles}.
For the partially reducible representations, we use the description in (\ref{eqn:PR}).

Finally, from the discussion at the end of Section \ref{sec:Lawton}, the
irreducible locus
$X_{irr}(\Gamma,\PGL(3,\CC))$ is isomorphic to the image
under the restriction map 
$\res\co X_{irr}(\Gamma,\GL(3,\CC))\to X(F_ 2,\SL(3,\CC))$. 
This is the set described in Lemmas \ref{lem:Wj} and \ref{lemma:intersect}.
\end{proof}

To have a coordinate description that allows to understand the intersection of
the closure of  $X_{irr}(\Gamma,\PGL(3,\CC))$ with the other components, 
one can get a coordinate description introducing new variables
which are the generators of the ring $\CC[y,\bar{y},z,\bar{z}]^{\mu_3}$,
namely
$u_1=y^3, u_2=\bar{y}^3, u_3=y\bar{y}, u_4=z^3, u_5=\bar{z}^3, 
u_6=z\bar{z},u_7=y\bar{z}, u_8=\bar{y}z,
u_9=y\bar{z}^2 , u_{10}=y^2 \bar{z}, u_{11}=\bar{y} z^2, u_{12}=\bar{y}^2z$. 
They satisfy certain relations which are easy to write down. A
substitution of these variables into the statement of Theorem \ref{thm:main} 
gives the equations that they satisfy.

To get the $\GL(3,\CC)$-representations,  recall that
 $$
 X(\Gamma,\GL(3,\CC)) = (X(\Gamma,\SL(3,\CC)) \x \CC^*)/\mu_3 \, .
  $$
So the variables are $(y,\bar{y},z,\bar{z},\alpha,\bar{\alpha},\beta,\bar{\beta},\lambda)$
with $\mu_3$ acting as 
 $$
 \varpi\cdot(y,\bar{y},z,\bar{z},\alpha,\bar{\alpha},\beta,\bar{\beta},\lambda)=
(\varpi y,\varpi^2\bar{y},\varpi^2z,\varpi\bar{z},\alpha,\bar{\alpha},\beta,\bar{\beta},\varpi\lambda).
$$
Thus one can get a coordinate description introducing the variables
$u_1=y^3, u_2=\bar{y}^3, u_3=y\bar{y}, u_4=z^3, u_5=\bar{z}^3, 
u_6=z\bar{z},u_7=y\bar{z}, u_8=\bar{y}z,
u_9=y\bar{z}^2 , u_{10}=y^2 \bar{z}, u_{11}=\bar{y} z^2, u_{12}=\bar{y}^2z$,
and $v_1=y\lambda^2,v_2=y^2\lambda, v_3=\bar{y}\lambda,
v_4=z\lambda,v_2=\bar{z}\lambda^2, v_3=\bar{z}^2\lambda$.
They satisfy certain relations which are easy to write down. Substituting 
these variables into the statement of Theorem \ref{thm:main}
gives the equations that they satisfy.

For the totally reducible representations, we have a simpler description
as $X_\mathrm{TR}(\Gamma,\GL(3,\CC)) = \CC^2\x \CC^*$. Also, for the 
partially reducible $\GL(3,\CC)$-representations, they 
split as an irreducible $\GL(2,\CC)$-representation and a one-dimensional
representation. So
  \begin{align*}
 X_\mathrm{PR}(\Gamma,\GL(3,\CC)) &= X_{irr}(\Gamma,\GL(2,\CC)) \x  \CC^*\, .
 \end{align*}

%%%%%%%%%%%%%%%%%%%%%%%%%%%%%%%%%%%%%%%%%%%%%
\section{The symmetry group of the figure eight knot}\label{sec:symmetries}
%%%%%%%%%%%%%%%%%%%%%%%%%%%%%%%%%%%%%%%%%%%%%

% % We describe the symmetry group using the Wirtinger presentation 
% %  \begin{equation}\label{eq:Wirtinger}
% %  \Gamma=\langle S,T\mid S T^{-1} S^{-1} T  S = T  S T^{-1} S^{-1} T\rangle\,.
% %  \end{equation}
% %  It is related to 
% % the presentation
% % $
% %  \Gamma =\la a,b,t \mid tat^{-1} =ab , t b t^{-1}=bab \ra
% % $ 
% %  by putting 
% % \[
% % \begin{cases}
% % S&= t \\
% % T&= a^{-1} t  a,
% % \end{cases} 
% % \quad\text{ and }\quad
% % \begin{cases}
% % t&= S \\
% % a&= T^{-1}STS^{-1}\\
% % b&= T S^{-1}.
% % \end{cases} 
% %  \]

The symmetry group $\mathrm{Sym}(S^3,K_8)$ of the figure eight knot $K_8$ is isomorphic to the outer automorphism group 
\[
\mathrm{Out}(\Gamma) = \mathrm{Aut}(\Gamma)/\mathrm{Inn}(\Gamma)
\]
(see \cite[10.6]{Kawauchi1996}). The group $\mathrm{Out}(\Gamma)$ was calculated by Magnus \cite{Magnus1931} %in 1931 
(see also \cite{Tsau1986}). It
 is isomorphic to the dihedral group $D_4$ of order eight
\begin{equation}\label{eq:OutGamma}
 \mathrm{Out}(\Gamma) = \la f,h \mid f^2 = h^4 = (fh)^2=1\ra\cong D_4\, ,
\end{equation}
where the elements $f$ and $h$ are represented by the following automorphisms 
(also denoted by $f$ and $h$):
$$ 
 \begin{array}{l}f(S) = T^{-1} \\[3pt]  f(T) = S^{-1} \end{array}
 \qquad \text{and}\qquad 
 \begin{array}{l} h(S) = S  T^{-1} S^{-1} \\[3pt]  h(T) = T  S^{-1} T^{-1}\,. \end{array}
$$
They are also described by the action on  $t,a,b\in \Gamma$ as follows:
\begin{align}
f(t) &= T^{-1} =  a^{-1} t^{-1} a  \sim t^{-1}\notag\\ 
f(a) & = ST^{-1}S^{-1}T   =   a^{-1} \label{eq:f} \\
f(b) &= S^{-1}T= b a^{-1}  \sim b \notag
\end{align}
and
\begin{align}
h(t) &= ST^{-1}S = t a^{-1} t^{-1} a t^{-1} \sim t^{-1}\notag\\ 
h(a) & = TST^{-2} = T b^{-1} T^{-1} = a^{-1} t a  b^{-1}  a^{-1} t^{-1} a \sim b^{-1} \label{eq:h} \\
h(b) &= TS^{-1}T^{-1} STS^{-1}= STS^{-1}T^{-1}= T a T^{-1} = a^{-1} t a  t^{-1} a\sim a \notag
\end{align}

A peripheral system $(m,\ell)$ of the figure eight knot is given by
 \begin{equation}\label{eq:peripheral}
  \begin{array}{l}  
      m = S = t\, , \\[3pt] %\text{ and } 
      \ell = T^{-1} S T S^{-1}  S^{-1} T  S  T^{-1} = [a,b]\,.
  \end{array}
\end{equation}
Notice that by \eqref{eq:h} we obtain 
 \begin{align*}
 h(m) &= t a^{-1}  m^{-1}   t^{-1} a, \\
h(\ell) &= h([a,b]) = a^{-1} t a   [b^{-1},a]   a^{-1} t^{-1} a\,.
 \end{align*}
Now the relation $t^{-1} a^{-1} t = b a^{-2}$ gives that
the peripheral system $(h(m),h(\ell))$ is conjugated to $(m^{-1},\ell)$. This reflects the amphicheirality of the figure eight knot. 

The induced action on the varieties of representations are given in coordinates as follows:
$$
f^*(y)=\bar y, \ f^*(\bar y)=y, \ f^*(z)=\bar z, \ f^*(\bar z)=z,    \ f^*(\alpha)=\bar\alpha, \  f^*(\bar \alpha)=\alpha, \ f^*(\beta)=\beta, \  f^*(\bar \beta)=\bar\beta,
$$
and
$$
h^*(y)=\bar y, \ h^*(\bar y)=y, 
 h^*(\alpha)=\bar\beta, \  h^*(\bar \alpha)=\beta, \ h^*(\beta)=\alpha, \  h^*(\bar \beta)=\bar\alpha.
$$

\begin{lemma} $h^*(z)= \bar y^2-\bar z$ and $h^*(\bar z)=  y^2- z$. \end{lemma}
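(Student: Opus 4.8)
The plan is to compute $h^*(z)$ directly as the trace of an explicit word, shorten that word using the defining relation of $\Gamma$, identify the resulting $\SL(3,\CC)$-trace as $\bar y^2-\bar z$, and then obtain $h^*(\bar z)$ for free by duality.

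First I would use $z=\chi(ST)$ together with $h(S)=ST^{-1}S^{-1}$ and $h(T)=TS^{-1}T^{-1}$ to write
\[
 h^*(z)=\chi\big(h(S)h(T)\big)=\chi\big(ST^{-1}S^{-1}\,TS^{-1}T^{-1}\big).
\]
Setting $W=ST^{-1}S^{-1}T$, Presentation~\eqref{eq:pres-2bridge} reads $WS=TW$, so that $T=WSW^{-1}$ and $T^{-1}=WS^{-1}W^{-1}$. Since the word above equals $W\,S^{-1}T^{-1}$, substituting $T^{-1}=WS^{-1}W^{-1}$ and using cyclic invariance of the trace gives
\[
 h^*(z)=\chi\big(WS^{-1}WS^{-1}W^{-1}\big)=\chi\big(S^{-1}WS^{-1}\big)=\chi\big(T^{-1}S^{-1}TS^{-1}\big).
\]
Writing $A=T^{-1}$ and $B=S^{-1}$ (so $\chi(A)=\chi(B)=\bar y$, $\chi(A^{-1})=\chi(B^{-1})=y$ and $\chi(AB)=\chi(T^{-1}S^{-1})=\bar z$), the target becomes $\chi(ABA^{-1}B)$.

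The core of the argument is the reduction of $\chi(ABA^{-1}B)$ to the coordinates of \eqref{eq:parameters} by means of the Cayley--Hamilton identity $M^{-1}=M^2-\tr(M)M+\tr(M^{-1})I$ for $M\in\SL(3,\CC)$ and its polarisations (equivalently, Lawton's fundamental trace identities). Two elementary inputs feed into this: $\tr(ABA^{-1})=\tr(B)=\bar y$, and the conjugacy $[A^{-1},B^{-1}]=(AB)^{-1}[A,B](AB)$, which yields $\tr[A,B]=\tr[A^{-1},B^{-1}]=\tr[T,S]=\alpha$. Rewriting $ABA^{-1}B=[A,B]\,B^2$ and expanding $B^2$ by Cayley--Hamilton isolates the terms $\bar y^2$ and $-\alpha y$ together with a residual commutator trace; iterating the reduction and collecting terms is set up so that everything collapses to $\bar y^2-\bar z$. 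I expect this bookkeeping to be the main obstacle, because naive single-matrix expansions are circular (they preserve the $A$-degree of the word), so one genuinely needs the bilinear identity. In practice I would either push the polarised identity through by hand or confirm the resulting polynomial identity of trace functions on the embedding $X(\Gamma,\SL(3,\CC))\hookrightarrow\CC^8$ of Proposition~\ref{prop:parametersLawton} by the symbolic computation recorded in \cite{fig8html}.

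Finally, I would deduce the second equality formally. Let $\iota$ be the involution $\iota(\chi)(\gamma)=\chi(\gamma^{-1})$ (the character of the dual representation); it swaps $y\leftrightarrow\bar y$, $z\leftrightarrow\bar z$, $\alpha\leftrightarrow\bar\alpha$ and $\beta\leftrightarrow\bar\beta$, and the direct check $\iota(h^*\chi)(\gamma)=\chi(h(\gamma)^{-1})=h^*(\iota\chi)(\gamma)$ shows $\iota h^*=h^*\iota$. Applying $\iota$ to $h^*(z)=\bar y^2-\bar z$ and using $\bar z=\iota(z)$ then gives $h^*(\bar z)=h^*(\iota z)=\iota(h^* z)=\iota(\bar y^2-\bar z)=y^2-z$, completing the proof.
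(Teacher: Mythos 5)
Your opening and closing steps are sound: reducing $h(ST)$ by one application of the relation $WS=TW$ to the length-four word $T^{-1}S^{-1}TS^{-1}$ is correct (the paper instead reduces to the cyclically different word $T^{-1}ST^{-1}S^{-1}$, which is equally valid), and your duality argument $\iota h^{*}=h^{*}\iota$, deducing $h^{*}(\bar z)=y^{2}-z$ formally from the first identity, is a genuine small improvement on the paper, which simply repeats the computation. The gap is the central claim that Cayley--Hamilton/Lawton identities make $\chi(ABA^{-1}B)$ ``collapse to $\bar y^{2}-\bar z$''. They cannot, no matter how the bookkeeping is organised: those identities are universal on $X(F_2,\SL(3,\CC))$, and carrying them out (for instance, tracing the polarised Cayley--Hamilton identity for $B^{2}A+BAB+AB^{2}$ against $A^{-1}$) yields
\[
\chi(ABA^{-1}B)=\bar z\,\beta+\bar y\,z-y^{2}\bar y+y\,\bar\beta+y,
\]
the exact analogue of the paper's intermediate formula $h^{*}(z)=\bar z\bar\beta+\bar y z-y^{2}\bar y+y\beta+y$ for its word (the two differ by $(\bar z-y)(\bar\beta-\beta)$, which vanishes component by component). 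This is not $\bar y^{2}-\bar z$, and the discrepancy is not uncollected bookkeeping: at the trivial character of $\Gamma$ your word has trace $3$ while $\bar y^{2}-\bar z=6$, so no argument using only the group relation plus trace identities can terminate at $\bar y^{2}-\bar z$. (Incidentally, the ``residual commutator trace'' you budget for never appears: the universal expression for this word is commutator-free, so your observation $\tr[A,B]=\alpha$, while correct, is not needed.)

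What is missing is precisely the substance of the paper's proof: the passage from the displayed formula to $\bar y^{2}-\bar z$ requires the defining equations of each component of $X(\Gamma,\SL(3,\CC))$. On $V_0$ one uses $\beta=\bar\beta$, $y\bar y=(\alpha+1)(\beta+1)$, and the radical generator $y\alpha\beta+\bar y^{2}-\bar y z+y\alpha-\bar z\beta-\bar z=0$ of Remark~\ref{rem:radical}; away from $V_0$ one substitutes each component's explicit expressions of $z$ and $\bar z$ in the remaining coordinates (on $V_1$, for example, $\bar z=y$, $z=\bar y$ together with $y\bar y=\beta+\bar\beta+2$). Your fallback --- ``confirm the resulting polynomial identity of trace functions on the embedding of Proposition~\ref{prop:parametersLawton} by symbolic computation'' --- silently presupposes exactly these equations, and run naively over the whole character variety it would founder at reducible characters such as the trivial one above; the verification has to be organised component by component, which is the one idea the proposal omits.
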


\begin{proof} 
We check this equality on each component. Away from $V_0$, $z$ and $\bar z$ are functions on the other variables, 
and the proof is straightforward. For $V_0$, we compute $ h^*(z)$ as follows:
we have $ta^{-1}ta = ST$ and
\[ h(ST) = S T^{-1} S^{-1} T S^{-1} T^{-1} = T^{-1} S T^{-1}S^{-1}\sim T^{-1}S^{-1} T^{-1} S\,.\]
Now, we proceed as in the proof of Lemma~\ref{lem:projection}. 
Let  $\rho\co\Gamma\to\SL(3,\CC)$ be a representation. We put 
$\rho(S)=M$ and $\rho(T)=N$. By the Cayley--Hamilton theorem  we have
\[ (N^{-1}M^{-1})^3 = \tr(N^{-1}M^{-1}) (N^{-1}M^{-1})^2 - \tr(MN) (N^{-1}M^{-1}) + \mathrm{id}\,.\]
Multiplying this identity by $MNM^2$ gives:
\[ \tr(N^{-1}M^{-1}N^{-1}M) = \tr(N^{-1}M^{-1}) \, \tr(N^{-1}M) - \tr(MN)\,\tr(M^{2}) +\tr(M^3 N)\,.\]
Applying the same procedure to $M^3 N$ and $M^2 N$, we obtain
\begin{align*}
\tr(M^3 N) &= \tr(M) \, \tr(M^{2}N) - \tr(M^{-1})\,\tr (MN) + \tr(N) 
\intertext{and}
\tr(M^2 N) &= \tr(M) \, \tr(MN) - \tr(M^{-1})\,\tr (N) + \tr(M^{-1}N)\,.
\end{align*}
Now $S=t$, $T=a^{-1}ta$, $ta^{-1}ta = ST$ and $b = TS^{-1}$ gives 
$h^*(z) = \bar z \bar \beta + \bar y z - y^2 \bar y  + y\beta +y$.
Using that on $V_0$ $\bar \alpha=\alpha$
and $\bar \beta=\beta$, 
$y\bar y= (1+\alpha)(1+\beta)$, and 
$  y \alpha \beta + \bar{y}^{2} -  \bar{y} z + y \alpha - \bar{z} \beta -  \bar{z}=0$
(see Remark~\ref{rem:radical})
the computation for $ h^*(z)$ follows.
The formula for $ h^*(\bar z) $ is proved in the same way.
\end{proof}

% For the coordinates $z$ and $\bar z$ we have:
% \begin{align*}
%  h^*(z) &= \bar z \bar \beta + \bar y z - y^2 \bar y  + y\beta +y\, ;\\
%  h^*(\bar z) &=  z  \beta +  y \bar z - y \bar y^2 + \bar y\bar \beta + \bar y \,.
%  \end{align*}
%  \end{lemma}
% \begin{proof} 
% In order to calculate $ h^*(z)$ we proceed as follows: we have $ta^{-1}ta = ST$ and
% \[ h(ST) = S T^{-1} S^{-1} T S^{-1} T^{-1} = T^{-1} S T^{-1}S^{-1}\sim T^{-1}S^{-1} T^{-1} S\,.\]
% Now, we proceed as in the proof of Lemma~\ref{lem:projection}. 
% Let  $\rho\co\Gamma\to\SL(3,\CC)$ be a representation. We put 
% $\rho(S)=M$ and $\rho(T)=N$. By the Cayley--Hamilton theorem  we have
% \[ (N^{-1}M^{-1})^3 = \tr(N^{-1}M^{-1}) (N^{-1}M^{-1})^2 - \tr(MN) (N^{-1}M^{-1}) + \mathrm{id}\,.\]
% Multiplying this identity by $MNM^2$ gives:
% \[ \tr(N^{-1}M^{-1}N^{-1}M) = \tr(N^{-1}M^{-1}) \, \tr(N^{-1}M) - \tr(MN)\,\tr(M^{2}) +\tr(M^3 N)\,.\]
% Applying the same procedure to $M^3 N$ and $M^2 N$, we obtain
% \begin{align*}
% \tr(M^3 N) &= \tr(M) \, \tr(M^{2}N) - \tr(M^{-1})\,\tr (MN) + \tr(N) 
% \intertext{and}
% \tr(M^2 N) &= \tr(M) \, \tr(MN) - \tr(M^{-1})\,\tr (N) + \tr(M^{-1}N)\,.
% \end{align*}
% Now $S=t$, $T=a^{-1}ta$, $ta^{-1}ta = ST$ and $b = TS^{-1}$ gives the formula for 
% $ h^*(z)$. The formula for $ h^*(\bar z) $ is proved in the same way.\\
% \comment{M: I hope everything is alright now\ldots}
% \end{proof}
Thus we have:

\begin{proposition}
$f^*$ preserves the components of $X(\Gamma,\SL(3,\CC))$ and $h^*$ swaps $V_1$ and $V_2$. 
\end{proposition}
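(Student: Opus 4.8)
The plan is to exploit that $f^*$ and $h^*$, being induced by automorphisms of $\Gamma$, are automorphisms of the scheme $X(\Gamma,\SL(3,\CC))$ and hence permute its five irreducible components; the whole content is to identify these two permutations. First I would record a general principle: precomposition with a group automorphism preserves reducibility and the isomorphism type of the semisimplification of a representation. Consequently both $f^*$ and $h^*$ map $X_\mathrm{TR}$ to $X_\mathrm{TR}$ and $X_\mathrm{PR}$ to $X_\mathrm{PR}$ (totally reducible and partially reducible are intrinsic notions), so each restricts to a permutation of $\{V_0,V_1,V_2\}$, and it only remains to determine these.

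For $f^*$ I would argue by direct substitution into the defining equations of Theorem~\ref{thm:main}. Using $f^*(y)=\bar y$, $f^*(z)=\bar z$, $f^*(\alpha)=\bar\alpha$, $f^*(\beta)=\beta$ and the conjugate relations, apply $f^*$ to the equations of $V_1$: the condition $\alpha=\bar\alpha=1$ is symmetric, the equations $y\bar y=\beta+\bar\beta+2$ and $y^3+\bar y^3=\beta\bar\beta+5\beta+5\bar\beta+5$ are invariant since $f^*$ fixes $\beta,\bar\beta$ and the left-hand sides are symmetric in the bar-swap, and the two remaining equations $z=\bar y$, $\bar z=y$ are merely interchanged. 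Hence $f^*(V_1)=V_1$, and the identical computation with the roles of $\alpha$ and $\beta$ exchanged gives $f^*(V_2)=V_2$. Since $f^*$ permutes $\{V_0,V_1,V_2\}$ and fixes $V_1$ and $V_2$, it must also fix $V_0$; combined with the first paragraph this shows $f^*$ preserves every component.

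For $h^*$ I would again substitute, now using the coordinate formulas together with the preceding lemma $h^*(z)=\bar y^2-\bar z$, $h^*(\bar z)=y^2-z$. Applying $h^*$ to the equations of $V_2$: the relation $\beta=\bar\beta=1$ becomes $\alpha=\bar\alpha=1$ because $h^*(\beta)=\alpha$ and $h^*(\bar\beta)=\bar\alpha$; the equation $y\bar y=\alpha+\bar\alpha+2$ becomes $y\bar y=\beta+\bar\beta+2$; the cubic relation transforms into $y^3+\bar y^3=\beta\bar\beta+5\beta+5\bar\beta+5$; and $z=y^2-\bar y$, $\bar z=\bar y^2-y$ become $\bar z=y$, $z=\bar y$ after inserting the lemma. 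These are exactly the defining equations of $V_1$. Since the pullback is contravariant, the induced geometric map $h^*$ therefore carries $V_1$ onto $V_2$; running the same substitution on the equations of $V_1$ produces those of $V_2$, so $h^*$ carries $V_2$ onto $V_1$. Thus $h^*$ swaps $V_1$ and $V_2$, and (being a permutation of the three components) fixes $V_0$.

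The only delicate points, which I would flag rather than grind through, are keeping the contravariant direction of the pullback straight and the fact that the ideal for $V_0$ in Proposition~\ref{prop:distinguishedComp} need not be radical. The argument sidesteps both by never manipulating the $V_0$ equations: the claims $f^*(V_0)=V_0$ and $h^*(V_0)=V_0$ follow purely from the component-counting in the first paragraph once the action on $V_1,V_2,X_\mathrm{TR},X_\mathrm{PR}$ is known, so no computation on the distinguished component is required. I expect this bookkeeping of directions, rather than any genuine difficulty, to be the main thing to get right.
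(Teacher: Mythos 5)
Your proof is correct and is essentially the argument the paper intends: the proposition follows directly from the coordinate formulas for $f^*$ and $h^*$ together with the lemma $h^*(z)=\bar y^2-\bar z$, $h^*(\bar z)=y^2-z$, by substituting into the defining equations of $V_1$ and $V_2$, exactly as you do (and your substitutions check out, e.g.\ $h^*$ turns $z=y^2-\bar y$ into $\bar y^2-\bar z=\bar y^2-y$, i.e.\ $\bar z=y$). Your additional bookkeeping --- that automorphisms preserve the reducibility type, hence permute $\{V_0,V_1,V_2\}$, so $V_0$ is fixed by counting without ever touching its non-radical ideal --- is a sensible tidying of the same route rather than a different one.
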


\begin{remark}
If we consider also the action of $\mu_3$, the center of $\SL(3,\CC)$, we 
realize that $y \bar{y}$ and $y^3+\bar{y}^3$ are invariant by  both
the symmetry group of the knot and the action of $\mu_3$. This explains why 
$y \bar{y}$ and $y^3+\bar{y}^3$
are symmetric polynomials on $\alpha$ and $\beta$ for $V_0$,
on $\beta$ and $\bar \beta$ for $V_1$ and on $\alpha$ and $\bar \alpha$ for $V_2$, as well as
the symmetries on those variables (swapping $V_1$ and $V_2$). 
Similar considerations with the variables $z$ and $\bar z$ can be made.
\end{remark}

%%%%%%%%%%%%%%%%%%%%%%%%%%%%%%%%%%%%%%%%%%%%%
 \section{The non-distinguished components as Dehn fillings}
%%%%%%%%%%%%%%%%%%%%%%%%%%%%%%%%%%%%%%%%%%%%%
\label{section:dehn}

In this section we view the non-distinguished component
as the variety of representations induced by an exceptional Dehn filling 
on the figure eight knot. 
Those representations also appear in work of Bing and Martin 
\cite{Bing-Martin} in the proof of property P for twist knots. This is explained at the end of the section.

We recall that the figure eight knot has six slopes 
$s\in \QQ \cup\{\infty\}$ whose Dehn filling $K(s)$
is a \emph{small} Seifert fibered manifold; namely, a Seifert fibered manifold with 
basis orbifold a 2-sphere with 
three cone points of order $p$, $q$, and $r\geq 2$, $S^2(p,q, r)$. The precise 
coefficients are (cf. \cite{Gor}):
$$
\begin{array}{l}
K(\pm 1) \textrm{ fibers over }  S^2(2,3,7), \\
K(\pm 2) \textrm{ fibers over } S^2(2,4,5), \\
K(\pm 3) \textrm{ fibers over } S^2(3,3,4).
\end{array}
$$
The center of $\pi_1(K(\pm s))$, $s=1,2,3$, is generated by a regular fibre.
By Schur's lemma, any irreducible representation of $\pi_1(K(\pm s))\to\SL(3,\CC)$, $s=1,2,3$, 
maps the fibre to the center of $\SL(3,\CC)$.
This motivates the  study of representations of the orbifold fundamental groups 
$\pi_1^\mathcal{O}( S^2(p,q,r))$, that are
isomorphic to the (orientable) triangle groups
\[
\pi_1^\mathcal{O}( S^2(p,q,r))\cong D(p,q,r) = \la k,l \mid k^p,\ l^q,\ (kl)^r\ra\,.  
\]
These surjections of $\pi_1(K(\pm s))$, $s=1,2,3$, onto the corresponding triangle group are given by taking the quotient of the fundamental group of the small Seifert fibered manifold $\pi_1(K(\pm s))$ by its center.

In particular, using the  Wirtinger presentation \eqref{eq:pres-2bridge}, an epimorphism 
$\phi\co\Gamma\to D(3,3,4)=\langle k,l \mid l^3,\ k^3,\ (kl)^4 \rangle$
 is given by
\[
\phi(S) = k l k\ \text{  and }\ \phi(T)=k l k l k\,.
\]
With Presentation~\eqref{eq:parametersfib}, 
\begin{equation} \label{eq:phi_a-phi_b}
\phi(a)= k^{-1} l^{-1} k l , \quad \phi(b)=\phi(TS^{-1}) = k l \quad \text{ and }\quad  \phi(t) = k l k \, .
\end{equation}
It satisfies $\phi(b)^4=1$ and 
$\phi(m^3 \ell) =1$. Notice that the surjection $\phi$ induces an injection
 $$
 \phi^*\co X(D(3,3,4),\mathrm{SL}(3,\CC) )\hookrightarrow X(\Gamma,\mathrm{SL}(3,\CC) ).
 $$

Characters $\chi$ in $V_1$ satisfy $\chi(b^{\pm 1})=1$.
In addition, by \eqref{eq:phi_a-phi_b}, $\phi(b)=kl$ has order $4$. This motivates the following lemma.

\begin{lemma}\label{lem:D334}
The variety $\overline{X^{irr}(D(3,3,4),\mathrm{SL}(3,\CC) )}$ has a component $\mathcal{W}$ of dimension $2$ and three isolated points.
The variety $\mathcal{W}$ is isomorphic to the hypersurface in $\CC^3$ given by the equation
$$
\zeta^2-(\nu \bar{\nu}-2) \zeta+\nu^3+\bar{\nu}^3-5\nu\bar{\nu}+5=0\,.
$$
Here, the parameters are $\nu=\chi(k^{-1}l)$, $\bar{\nu}=\chi(k l^{-1})$ and $\zeta = \chi([k,l])$.
For every $\chi\in \mathcal{W}$, $\chi(k^{\pm 1})=\chi(l^{\pm 1})=0$ and $\chi( (kl)^{\pm 1})=1$.
 
Moreover, all characters in $\mathcal{W}$ are irreducible except for the three points
$(\nu,\bar{\nu},\zeta)=(2,2,1)$, $(2\varpi, 2\varpi^2, 1)$, $(2\varpi^2,2\varpi, 1)$, $\varpi=\vvarpi$.
\end{lemma}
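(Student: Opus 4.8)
The plan is to realise $X(D(3,3,4),\SL(3,\CC))$ inside $X(F_2,\SL(3,\CC))$ for the free group $F_2=\langle k,l\rangle$, and to read off the defining equation from Lawton's description in \cite{Lawton0} once the relations $k^3=l^3=(kl)^4=1$ have been translated into trace conditions. First I would analyse eigenvalues. An irreducible $\rho$ cannot send $k$ (or $l$) to a scalar, since the cube relation makes the other generator diagonalisable and a scalar value for one generator would force reducibility. As $\rho(k)^3=\Id$ makes $\rho(k)$ diagonalisable with eigenvalues in $\mu_3$ and determinant $1$, the only non-scalar option is the regular triple $\{1,\varpi,\varpi^2\}$; hence $\chi(k)=\chi(k^{-1})=0$, and likewise $\chi(l)=\chi(l^{-1})=0$. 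For $\rho(kl)$ the relation $(kl)^4=\Id$ forces eigenvalues in $\mu_4$ of product $1$: the regular case is $\{1,i,-i\}$, giving $\chi(kl)=\chi((kl)^{-1})=1$, and the three degenerate non-scalar cases are $\{1,-1,-1\}$, $\{-1,i,i\}$ and $\{-1,-i,-i\}$. Conversely, by Cayley--Hamilton these trace values pin down the characteristic polynomials $x^3-1$ and $(x-1)(x^2+1)$, both with simple roots, so the trace conditions are equivalent to the group relations; this is what lets me work purely with Lawton coordinates.

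Next I would separate the two-dimensional piece from the isolated points by a dimension count. Viewing $D(3,3,4)$ as the orbifold group of a thrice-punctured sphere, the character variety with prescribed conjugacy classes of $k$, $l$ and $(kl)^{-1}$ has expected dimension $\dim C_k+\dim C_l+\dim C_{kl}-2\dim\SL(3,\CC)$ (the centre of $\SL(3,\CC)$ being finite). A regular semisimple class has dimension $6$, so the regular choice for $\rho(kl)$ gives $6+6+6-16=2$, yielding $\mathcal{W}$, whereas each degenerate class of $\rho(kl)$ has dimension $4$ and gives $6+6+4-16=0$, i.e.\ isolated characters. On $\mathcal{W}$ the six Lawton traces are frozen to $(0,0,0,0,1,1)$, leaving the free parameters $\nu=\chi(k^{-1}l)$, $\bar\nu=\chi(kl^{-1})$ and the commutator trace $\zeta=\chi([k,l])$.

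The computational heart, and the main obstacle, is to produce the stated equation. By \cite{Lawton0} the trace $\zeta$ satisfies a universal quadratic $\zeta^2-P\zeta+Q=0$, where $P,Q$ are polynomials in the eight traces; substituting $(0,0,0,0,1,1,\bar\nu,\nu)$ and simplifying should give $P=\nu\bar\nu-2$ and $Q=\nu^3+\bar\nu^3-5\nu\bar\nu+5$, exactly as the analogous substitution produced $W_0,W_1,W_2$ in Lemma~\ref{lem:Wj}. I would carry this out with the computer algebra already used in the paper \cite{fig8html}. The two roots of the quadratic are $\chi([k,l])$ and $\chi([l,k])$, so $\mathcal{W}$ is the double cover of the $(\nu,\bar\nu)$-plane branched over the discriminant; the coordinate map $\chi\mapsto(\nu,\bar\nu,\zeta)$ is injective (the remaining traces being constant) and surjective onto the hypersurface by Lawton's theorem, giving the claimed isomorphism.

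Finally I would locate the non-irreducible and isolated characters. A reducible semisimple $\rho$ compatible with the eigenvalue constraints cannot be totally reducible (that would force $\chi(kl)=3$), so it is a sum $\psi\oplus\sigma$ of a character and an irreducible $\GL(2,\CC)$-representation; the constraint $\psi(k)\psi(l)\in\mu_3\cap\mu_4=\{1\}$ leaves exactly the three choices $\psi(k)\in\mu_3$, which a short $\SL(2,\CC)$ trace computation for the commutator identifies with $(\nu,\bar\nu,\zeta)=(2,2,1),(2\varpi,2\varpi^2,1),(2\varpi^2,2\varpi,1)$; these satisfy the equation and constitute the full reducible locus of $\mathcal{W}$, every other point being irreducible. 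The three degenerate eigenvalue configurations of $\rho(kl)$ each give a rigid, zero-dimensional problem, which a direct computation resolves into a single irreducible character apiece, accounting for the three isolated points. Here the subtle part is to confirm both existence and uniqueness at each degenerate class, and to check that the twisting action of $\mu_3$ (arising from $\Hom(D(3,3,4),\mu_3)$, whose abelianisation is $\ZZ/3$) does not multiply the count; I expect this bookkeeping, together with the $P,Q$ substitution, to be where the real work lies.
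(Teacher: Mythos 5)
Your proposal follows the paper's proof essentially step for step: the eigenvalue analysis forcing $\chi(k^{\pm 1})=\chi(l^{\pm 1})=0$ and the four possible spectra of $\rho(kl)$, the Cayley--Hamilton argument showing the trace conditions are equivalent to the relations, the substitution of the frozen traces into Lawton's universal quadratic $\zeta^2-P\zeta+Q=0$ to obtain the hypersurface, one isolated point per degenerate spectrum, and the $1\oplus 2$ analysis of the reducible locus with the twisting by $\Hom(D(3,3,4),\mu_3)\cong\ZZ/3\ZZ$ producing the three points $(2,2,1)$, $(2\varpi,2\varpi^2,1)$, $(2\varpi^2,2\varpi,1)$. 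One caveat: your expected-dimension count $6+6+4-16=0$ only \emph{lower}-bounds the dimension of the relative character variety, so it cannot by itself show the degenerate classes give isolated points --- but you correctly defer to the direct computation (the paper multiplies the minimal-polynomial identity, e.g.\ $(KL)^2+(1-i)KL-i\,\mathrm{Id}=0$, by suitable elements and takes traces to force $\chi(kl^{-1})=\chi(k^{-1}l)=0$, pinning all Lawton coordinates), which is exactly where the paper does this work.
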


\begin{proof}
For an irreducible representation of $D(3,3,4)$, the eigenvalues of the image of elements of order three  $k$ and $l$ are $\{1,\varpi,\varpi^2\}$ %, with  $\omega^2+\omega+1=0$
(otherwise the image of $k$ or $l$ would be central and the representation reducible). In particular    $\chi(k^{\pm 1})=\chi(l^{\pm 1})=0$.
For the image of $kl$ there are three possible set of eigenvalues: $\{1, i , -i\}$, $\{-1, i , i\}$,  
$\{-1, -i , -i\}$, and $\{1,-1,-1\}$. We shall see that for  $\{1, i , -i\}$ we get a two dimensional variety and for $\{-1, i , i\}$,  
$\{-1, -i , -i\}$, and $\{1,-1,-1\}$, isolated points.

First assume that the eigenvalues of the image of $kl$ are $\{1, i , -i\}$, namely $\chi(kl)=\chi( (kl)^{-1})=1$. Then, by applying Lawton's theorem and by taking 
coordinates
$\nu(\chi)=\chi(k^{-1}l)$, $\bar\nu(\chi)=\chi(kl^{-1})$, and $\zeta(\chi)=\chi([k,l])$, we get the hypersurface of $\CC^3$
\[\zeta^2-( \nu\bar\nu -2) t+\nu^3+\bar\nu^3-5\nu\bar\nu+5=0\, ,\]
that we denote $\mathcal{W}$.

Next we deal with the case where the eigenvalues of a representation of $kl$ are $\{-1, i, i\}$, namely $\chi(kl)=-1+2 i$ and $\chi( (kl)^{-1})=-1-2 i$. We apply Lawton's theorem
again, but this is not sufficient to determine a representation, as the image of $kl$ could not diagonalize. We need to impose further conditions that determine the value of the characters 
at $kl^{-1}$ and $k^{-1} l$, which will imply that the dimension of the component of the character variety is zero.
Namely, denote by $K$ and $L$ the respective images of $k$ and $l$ by a representation. As we require that $KL$ is diagonalizable, we have
\[
 0=(KL+\Id)(KL-i\Id)= (KL)^2+(1-i)KL - i \Id\, .
\]
Equivalently, $KL+(1-i)\Id - i (KL)^{-1}=0$. Multiplying with $K^{-1}$, $K L K^{-1}+(1-i)K^{-1} - i L^{-1}K ^{-2}=0$,
and since  $\tr L=\tr K^{-1}=0$, we get $\tr L^{-1}K ^{-2}=0$. In addition, since
$K^{-2}=K$, $\tr K L^{-1}=\tr L^{-1}K ^{-2}=0$. Similarly, $\tr K^{-1} L=0$, thus we get a zero dimensional variety.
Lawton's formulas \cite{Lawton0} yield that the trace of the commutator and its inverse are the same: $\tr [K,L]=\tr [K^{-1},L^{-1}]=1$. 
Thus, this is a single point in the character variety, by Lawton's coordinates. 
The case where the eigenvalues are $\{-1,- i, -i\}$ is precisely the same computation, by considering complex conjugation, and the case 
 $\{1,-1,-1\}$ is completely analogous.

Finally, let $\rho$ be a reducible semisimple representation with character $\chi_\rho$ in $\mathcal{W}$.
Hence, up to conjugation we assume that $\rho= \rho_1\oplus\rho_2$, 
 where $\rho_1\co D(3,3,4)\to\GL(2,\CC)$
 is irreducible, and $\rho_2\co D(3,3,4)\to \mu_3$ satisfies $\rho_2(g)\, \det(\rho_1(g)) =1$ for all $g\in D(3.3.4)$.
 First let us assume that $\det\circ\rho_1 = \rho_2$ is trivial, i.e.\ $\rho_1\co D(3,3,4)\to\SL(2,\CC)$.
 This implies that $\tr(\rho(g))=\tr(\rho_1(g)) +1$ for all $g\in D(3,3,4)$.
 Hence $K_1=\rho_1(k)$ and $L_1=\rho_1(l)$ are matrices of order three without common eigenspaces. Hence, up to conjugation, we can assume that
 \[
 K_1 = \begin{pmatrix} \omega & 0 \\ a &\omega^{-1}\end{pmatrix}
 \quad \text{ and } \quad
L_1 = \begin{pmatrix} \omega & 1 \\ 0 &\omega^{-1}\end{pmatrix}\,.
 \]
 Now, the condition $\tr (KL)=1$ implies $\tr(K_1L_1)=0$ and hence $a=1$.
 This gives
 \[
 K_1L_1^{-1} = \begin{pmatrix} 1 & -\omega \\ \omega^{-1} & 0\end{pmatrix},\quad
 K_1^{-1}L_1 = \begin{pmatrix} 1 & \omega^{-1} \\  -\omega & 0\end{pmatrix} \text{ and }
 [K_1,L_1] = \begin{pmatrix} 0 & 1 \\  -1 & 0\end{pmatrix}\,.
 \]
Hence, $\tr (\rho(kl^{-1}))=2$, $\tr (\rho(k^{-1}l)) =2$ and $\tr(\rho([k,l])) = 1$.

If $\lambda=\det\circ\rho_1$ is a non-trivial homomorphism, then 
$\lambda\cdot\rho\co D(3,3,4)\to \SL(3,\CC)$ is still a reducible representation
$\lambda\cdot\rho = (\lambda\cdot\rho_1) \oplus (\lambda\cdot\rho_2)$. Now,
$\lambda\cdot\rho_2$ is trivial and the preceding argument applies to $\lambda\cdot\rho$.
Finally, $\Hom(D(3,3,4),\mu_3) \cong \ZZ/3\ZZ$ and $\lambda(kl^{-1})= \lambda(k^{-1}l)^2$ implies the result.
\end{proof}

\begin{remark}
Further details in the proof of  Lemma~\ref{lem:D334} allow to describe those  
three  isolated points.
Composing with $\phi^*$, they correspond to the points in $X(\Gamma,\SL(3,\CC))$ 
with coordinates:
$$(\alpha,\bar\alpha,\beta,\bar\beta)=
(1,1,-1+2 i,-1-2 i),\ (1,1,-1-2 i,-1+2 i), \textrm{ and }(-1,-1,-1,-1).
$$
For those characters of $\Gamma$, $y=\bar y=z=\bar z=0$. Those  are 
precisely the three metabelian irreducible characters of $\Gamma$
that do not lie in $V_2$, see Corollary~\ref{cor:redired}.
\end{remark}

% \begin{remark}
% The zero-dimensional components of $\overline{X^{irr}(D(3,3,4),\mathrm{SL}(3,\CC) )}$ are exactly three points. This can be seen by calculation $P$ and $Q$ which gives for the parameters in question
% $P=\pm2$ and $Q=1$. Moreover this three points are exactly the characters of irreducible metabelian representations of $D(3,3,4)$.\\
% {\color{blue} The explicit computations are actually quite long. One needs that the commutator subgroup of $D(3,3,4)$ is generated by $kl$, $lk$ and $l^{-1}kl^{-1}$ (Reidemeister--Schreier).
% Then one has to figure out the matrices (Section~\ref{sec:explicite}) and to verify that the matrices commute. For example for the point $\{1,-1,-1\}$ we obtain
% \[ K =\left(\begin{array}{rrr}
%  0 & 0 & 1 \\
%  1 & 1 & 1 \\
%  -1 & 0 & -1
%  \end{array}\right) \text{ and } L= \left(\begin{array}{rrr}
%  1 & 0 & 1 \\
%  0 & -1 & -1 \\
%  0 & 1 & 0
%  \end{array}\right)
%  \]
%  and
%  \[
%  LK = \left(\begin{array}{rrr}
%  -1 & 0 & 0 \\
%  0 & -1 & 0 \\
%  1 & 1 & 1
%  \end{array}\right),\quad KL = \left(\begin{array}{rrr}
%  0 & 1 & 0 \\
%  1 & 0 & 0 \\
%  -1 & -1 & -1
%  \end{array}\right) \text{ and } L^{-1}KL^{-1} =
%  \left(\begin{array}{rrr}
%  0 & -1 & 0 \\
%  -1 & 0 & 0 \\
%  0 & 0 & -1
%  \end{array}\right)\,.
%  \]
%  These guys form a Klein four group. I do not think that we will enter into this\ldots
% }
% \end{remark}

\begin{proposition}\label{prop:non-dist-components}
 The components $V_1$ and $V_2$ are characters of representations which factor
 through the surjections $\Gamma\twoheadrightarrow \pi_1(K(\pm3))$ respectively.
These components are isomorphic to the hypersurface
 $$
 \zeta^2-(\nu \bar{\nu}-2) t + \nu^3+\bar{\nu}^3-5\nu\bar{\nu}+5=0.
 $$
 Here, the parameters are %$y=\chi(m)$, $\bar{y}=\chi(m^{-1})$ and 
 \[ 
 \nu = 
 \begin{cases}
\chi(t) &\text {for $V_2$,}\\
\chi(t^{-1}) &\text {for $V_1$;}
 \end{cases}
\quad
 \bar{\nu} = 
 \begin{cases}
\chi(t^{-1}) &\text {for $V_2$,}\\
\chi(t) &\text {for $V_1$;}
 \end{cases}
 \quad
  \zeta = 
 \begin{cases}
\chi(a) &\text {for $V_2$,}\\
\chi(b^{-1}) &\text {for $V_1$.}
 \end{cases}
 \]
All characters are irreducible except for the three points
 $(\nu,\bar{\nu},\zeta)=(2,2,1),\ (2\varpi, 2\varpi^2, 1), (2\varpi^2,2\varpi, 1)$, with $\varpi=\vvarpi$,
 that correspond to the intersection $V_1 \cap V_2=V_0\cap V_1 \cap V_2$.
 The intersection of $V_i\cap V_0$ is the zero locus of the discriminant on $\zeta$:
 $$
 \nu^2\bar{\nu}^2-4 \nu^3-4 \bar{\nu}^3+16 \nu \bar{\nu} -16=0.
 $$
 The restriction map $X(\Gamma,\SL(3,\CC))\to X(F_2,\SL(3,\CC))$ maps the intersection $V_1 \cap V_2$
onto a single point $\alpha=\bar\alpha=\beta=\bar\beta=1$.
\end{proposition}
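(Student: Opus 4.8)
The plan is to deduce the hypersurface description directly from the explicit equations for $V_1,V_2$ in Propositions~\ref{prop:eta-non-distinguished-V1} and~\ref{prop:eta-non-distinguished-V2}, to obtain the Dehn-filling statement from the epimorphism $\phi\co\Gamma\surj D(3,3,4)$ together with Lemma~\ref{lem:D334}, and to read off the intersections from the discriminant of the defining quadratic. First I would set up the coordinate isomorphism. For $V_2$, described by $\beta=\bar\beta=1$, $y\bar y=\alpha+\bar\alpha+2$ and $y^3+\bar y^3=\alpha\bar\alpha+5\alpha+5\bar\alpha+5$, put $\nu=y$, $\bar\nu=\bar y$, $\zeta=\alpha$. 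The first relation yields $\bar\alpha=\nu\bar\nu-2-\zeta$, so $(\alpha,\bar\alpha,y,\bar y)$ is recovered from $(\nu,\bar\nu,\zeta)$; substituting into the second relation converts it, after a short computation, into $\zeta^2-(\nu\bar\nu-2)\zeta+\nu^3+\bar\nu^3-5\nu\bar\nu+5=0$. This is a regular bijection $V_2\to\mathcal{W}$ with explicit inverse between two $2$-dimensional irreducible sets, hence an isomorphism; since $z,\bar z,\eta$ are already functions of these coordinates on $V_2$, nothing more is needed. The same computation with $\nu=\bar y$, $\bar\nu=y$, $\zeta=\bar\beta$ (and $\alpha=\bar\alpha=1$, $\beta=\nu\bar\nu-2-\zeta$) treats $V_1$.

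Next I would prove the factorization through Dehn fillings. The epimorphism $\phi$ with $\phi(a)=k^{-1}l^{-1}kl$, $\phi(b)=kl$, $\phi(t)=klk$ satisfies $\phi(m^3\ell)=1$, so it factors through the filling of the corresponding slope; since the centre of $\pi_1(K(\pm3))$ lands in $\mu_3$ under any irreducible $\SL(3,\CC)$-representation, $\phi^*$ identifies $\overline{X^{irr}(D(3,3,4),\SL(3,\CC))}$ with characters of $\Gamma$ that factor through that filling. On $\mathcal{W}$ one has $\chi(kl)=1$, whence $\beta=\bar\beta=1$ and $\phi^*(\mathcal{W})\subseteq V_2$. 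For equality I would match traces: using $k^3=l^3=1$ one gets the conjugacies $klk\sim k^{-1}l$ and $(klk)^{-1}\sim kl^{-1}$, and conjugating $[k,l]$ by $l^{-1}k^{-1}$ gives $\phi(a)=k^{-1}l^{-1}kl\sim[k,l]$; hence $\chi(\phi(t))=\nu$, $\chi(\phi(t^{-1}))=\bar\nu$, $\chi(\phi(a))=\zeta$, so $\phi^*$ realizes exactly the isomorphism of the previous paragraph and $\phi^*(\mathcal{W})=V_2$. The component $V_1=h^*(V_2)$ then factors through the filling of the opposite slope, since the amphichiral symmetry $h$ carries $m^3\ell$ to a conjugate of $m^{-3}\ell$, as recorded before Lemma~\ref{lem:D334}; the formulas for $h^*$ in Section~\ref{sec:symmetries} also explain the swap $\nu\leftrightarrow\bar\nu$, $a\leftrightarrow b^{-1}$ in the parameters. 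This gives the factorizations of $V_1$ and $V_2$ through $\pi_1(K(\pm3))$ respectively.

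Finally I would extract the remaining assertions. Irreducibility of all characters except the three points $(\nu,\bar\nu,\zeta)=(2,2,1),(2\varpi,2\varpi^2,1),(2\varpi^2,2\varpi,1)$ is transported from Lemma~\ref{lem:D334}; under the coordinate isomorphism these become $\mu_3\cdot(2,2,2,2,1,1,1,1)$, which are the triple points $V_0\cap V_1\cap V_2$ listed after Theorem~\ref{thm:main}, giving $V_1\cap V_2=V_0\cap V_1\cap V_2$. For $V_i\cap V_0$ I would compute the discriminant of the quadratic in $\zeta$, obtaining $\nu^2\bar\nu^2-4\nu^3-4\bar\nu^3+16\nu\bar\nu-16$; its vanishing forces $\zeta=(\nu\bar\nu-2)/2$, which for $V_1$ means $\beta=\bar\beta$ and for $V_2$ means $\alpha=\bar\alpha$, precisely the extra equation cutting out $V_0$, matching the curves computed after Theorem~\ref{thm:main}. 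The last claim is immediate: the three points of $V_1\cap V_2$ all satisfy $(\alpha,\bar\alpha,\beta,\bar\beta,\eta)=(1,1,1,1,-1)$ and differ only in the $\mu_3$-coordinates $y,\bar y,z,\bar z$, which do not survive restriction to $X(F_2,\SL(3,\CC))$, so they collapse onto the single point $\alpha=\bar\alpha=\beta=\bar\beta=1$.

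The main obstacle is the trace bookkeeping in the third paragraph: verifying that $\phi^*$ induces precisely the stated coordinate isomorphism and that $\phi^*(\mathcal{W})$ is all of $V_2$ rather than merely contained in it, and checking that the three isolated characters of $\overline{X^{irr}(D(3,3,4),\SL(3,\CC))}$ do \emph{not} lie on $V_2$ but instead give the metabelian characters on $V_0$ and $V_1$ (the remark following Lemma~\ref{lem:D334}), so that the component-versus-isolated-point dichotomy of Lemma~\ref{lem:D334} matches the $V_2$-versus-other-components dichotomy in $X(\Gamma,\SL(3,\CC))$.
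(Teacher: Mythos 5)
Your proposal is correct and follows essentially the same route as the paper's proof: the epimorphism $\phi\co\Gamma\to D(3,3,4)$ with $\phi(b)=kl$, $\phi(t)=klk\sim k^{-1}l$, $\phi(a)\sim [k,l]$, Lemma~\ref{lem:D334} for $\mathcal{W}$ and its three reducible points, the symmetry $h$ (with $h(t)\sim t^{-1}$, $h(a)\sim b^{-1}$) to carry $V_2$ to $V_1$ and slope $3$ to slope $-3$, and the discriminant of the quadratic in $\zeta$ for $V_0\cap V_i$. Your one addition --- obtaining the isomorphism of $V_2$ (resp.\ $V_1$) with the hypersurface directly by eliminating $\bar\alpha$ (resp.\ $\beta$) from the explicit equations of Propositions~\ref{prop:eta-non-distinguished-V2} and~\ref{prop:eta-non-distinguished-V1} --- is a harmless, indeed clarifying, way to justify the paper's terse assertion that $\res\circ\phi^*$ maps $\mathcal{W}$ \emph{onto} $V_2$.
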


% Of course this is compatible with the description in Section~\ref{sec:nondist}.

\begin{proof}
By \eqref{eq:phi_a-phi_b}, the surjection 
$\phi\co \Gamma\to D(3,3,4)$ maps $b$ to $kl$, and $a$ to a 
conjugate to $[k,l]$. Hence for every character $\chi\in \mathcal{W}$ we obtain
that 
$\res \circ\phi^*(\chi)=
\res  (\chi \circ \phi)\co F_2\to\CC
$
maps $b$ and $b^{-1}$ to $1$ (recall that $\phi(b)^4=1$).
Therefore, the map 
\[
\res \circ\phi^*\co \mathcal{W} \to X(F_2,\SL(3,\CC))
\]
 maps
$\mathcal{W}$ onto the 2-dimensional component $V_2$ given by the equations
$\beta=\bar\beta=1$. On this component we have
  $\alpha=\zeta$, and  $\bar\alpha= ( \nu \bar{\nu}-2)-\zeta$ (i.e.  $\alpha$ and $\bar\alpha$ are 
the solutions of the equation on $\zeta$). In addition, the 
intersection
 with $\alpha=\bar \alpha$ corresponds to the two possible values of $\zeta$ (for 
 fixed $\nu$ and $\bar{\nu}$) being equal, this is to the zero set of the discriminant
 of the quadratic equation on $t$. 
 The parameters for $\chi\in \mathcal{W}$ are $\chi(k^{-1}l)$, $\chi(kl^{-1})$ and $\chi([k,l])$.
 Now, by \eqref{eq:phi_a-phi_b} we have 
 \[
 \phi(t)= k l k \sim k^2 l = k^{-1} l
 \quad \text{ and }\quad
 \phi(a)= k^{-1} l^{-1} k l \sim [k,l],
 \] 
 and hence the parameters for $\phi^*\chi\in X(\Gamma,\SL(3,\CC))$ 
 are
 \[ \nu = \phi^*\chi(t),\quad \bar{\nu} = \phi^*\chi(t^{-1})\quad\text{ and }\quad
\zeta= \phi^*\chi(a)\,.
 \]

We obtain the component $V_2$ by the same considerations and 
by replacing $\phi$ by $\phi\circ h$. Notice that by \eqref{eq:h} we have 
$h(t) = t^{-1}$ and $h(a)=b^{-1}$.
 \end{proof}

\begin{remark}
One may ask why the Dehn fillings $K(\pm 3)$ give new components of 
$X(\Gamma,\SL(3,\CC))$, while  $K(\pm 1)$ and $K(\pm 2)$ do not, even if 
all of them are small Seifert fibered orbifolds. 
The reason is that the groups of the base orbifolds are different
and their varieties of representations have different dimension: 
$X(\pi_1^{\mathcal{O}}(S^2(3,3,4)),\SL(3,\CC))$ has a component dimension two, though
$X(\pi_1^{\mathcal{O}}(S^2(2,q,r)),\SL(3,\CC))$ ($q,r\geq 2$) has dimension zero.
This can be checked with the same argument as in the proof of Lemma~\ref{lem:D334}.
\end{remark}

 \section{Parametrizing representations}
 \label{sec:explicite}

 Similar representations of knot groups into $\SL(3,\CC)$ have been used in the literature before.
 In particular Bing and Martin used them to prove Property P for twist knots
 (see \cite{Bing-Martin}). The study of representations of  $D(3,q,r)$ to $\SL(3,\CC)$ goes back to \cite{Coxeter}. Some of this is presented in \cite[Section~15B]{BZH2013}.
 
We consider the following two matrices $K$ and $L$ of $\SL(3,\CC)$:
\[
K =        	
\left(\begin{array}{rrr}
0 & 0 & 1 \\
x_{0} & 1 & x_{1} \\
-1 & 0 & -1
\end{array}\right)
\text{ and } \ \
L=
\left(\begin{array}{rrr}
1 & y_{0} & y_{1} \\
0 & -1 & -1 \\
0 & 1 & 0
\end{array}\right)\,.
\]
Notice that $K$ and $L$ are of order three, 
\[
KL=\left(\begin{array}{rrr}
0 & 1 & 0 \\
x_{0} & x_{0} y_{0} + x_{1} - 1 & x_{0} y_{1} - 1 \\
-1 & - y_{0} - 1 & - y_{1}
\end{array}\right)
\]
that the characteristic polynomial of $KL$ is given by
\[
P_{KL} (t)=        	
t^3 - \left( x_{0} y_{0} +  x_{1} - y_{1} - 1\right) t^2 +
\left(x_{0} y_{1} -  x_{1} y_{1} -  x_{0} -  y_{0} + y_{1} - 1\right) t- 1\,.
\]
Hence, $\tr(KL) = \tr(L^{-1}K^{-1}) =1$ if and only if 
\[
x_{0} y_{0} +  x_{1} - y_{1} - 2 =0 
\ \text{ and } \
x_{0} y_{1} -  x_{1} y_{1} -  x_{0} -  y_{0} + y_{1} - 2\,.
\]
Now, define the ideal 
\[ I = \left( x_{0} y_{0} +  x_{1} - y_{1} - 2, x_{0} y_{1} -  x_{1} y_{1} - 
x_{0} - y_{0} +  y_{1} - 2\right)\subset \CC[x_0,x_1,y_0,y_1]
% I = Ideal (x0*y0 + x1 - y1 - 2, x0*y1 - x1*y1 - x0 - y0 + y1 - 2) 
\]
and $X=\mathbf{V}(I)\subset \CC^4$ its zero set. 
For each point $(x_0,x_1,y_0,y_1)\in X$ we obtain a representation 
$\rho_{(x_0,x_1,y_0,y_1)}\colon D(3,3,4)\to\SL(3,\CC)$ mapping $\phi(b)=xk$ to a matrix 
$B$ such that $\tr(B)=\tr(B^{-1}) =1$.

An easy computation gives that $X$ is a smooth irreducible variety, that we view as a subvariety of 
$R(D(3,3,4),\SL(3, \CC))$. The following proposition says that we can view it as a birrational slice.

\begin{proposition}
 \label{prop:slice}
 The projection  $ R(D(3,3,4),\SL(3, \CC))\to X(D(3,3,4),\SL(3, \CC)) $ 
 restricts to a birrational map $X\to \mathcal{W}$. 
\end{proposition}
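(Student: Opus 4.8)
The plan is to exhibit $\Psi\co X\to\mathcal{W}$, the restriction of the projection $R(D(3,3,4),\SL(3,\CC))\to X(D(3,3,4),\SL(3,\CC))$ to the slice, as a dominant generically injective morphism of irreducible surfaces, which in characteristic zero is the same as a birational map. First I would check that $\Psi$ is well defined with image in $\mathcal{W}$: for $(x_0,x_1,y_0,y_1)\in X$ the matrices $K,L$ have order three with $\tr(K)=\tr(L)=0$, and by the definition of $I$ one has $\tr(KL)=\tr((KL)^{-1})=1$, so $KL$ has eigenvalues $\{1,i,-i\}$; by Lemma~\ref{lem:D334} the character $\chi_{\rho_{(x_0,x_1,y_0,y_1)}}$ lies on $\mathcal{W}$, and in the coordinates of that lemma it is given by the polynomial functions $\nu=\tr(K^{-1}L)$, $\bar\nu=\tr(KL^{-1})$, $\zeta=\tr([K,L])$. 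Both $X$ and $\mathcal{W}$ are irreducible of dimension $2$: $X$ is smooth and irreducible by hypothesis, and solving the first generator of $I$ for $x_1$ and substituting into the second realizes it as a hypersurface in the affine space with coordinates $x_0,y_0,y_1$, so $\dim X=2$; while $\mathcal{W}$ is an irreducible component of dimension $2$ by Lemma~\ref{lem:D334}.

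Because $\dim X=\dim\mathcal{W}$ and both are irreducible, it suffices to prove that $\Psi$ is generically injective: this forces the image to be $2$-dimensional, hence dense in $\mathcal{W}$, so $\Psi$ is dominant, and a dominant generically injective morphism of irreducible varieties over $\CC$ is birational. To prove generic injectivity I would use that, by Lemma~\ref{lem:D334}, a dense open subset of $\mathcal{W}$ consists of irreducible characters (all but the three points $(\nu,\bar\nu,\zeta)=(2,2,1)$, $(2\varpi,2\varpi^2,1)$, $(2\varpi^2,2\varpi,1)$), and that an irreducible character is the character of a unique representation up to conjugation \cite{LM}. Hence two points of $X$ share the same image under $\Psi$ only if the corresponding slice representations are conjugate; since the center acts trivially by conjugation, it is enough to show that a generic irreducible representation in this family is conjugate to \emph{at most one} representation in slice form.

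The heart of the matter, and the step I expect to be the main obstacle, is this rigidity of the normal form. Given an irreducible $\rho$ with the prescribed eigenvalue data, set $K_0=\rho(k)$, $L_0=\rho(l)$; each has a simple eigenvalue-$1$ eigenline $p_{K_0}$, $p_{L_0}$ together with a complementary invariant plane. One may conjugate so that $p_{L_0}=\langle e_1\rangle$ and $p_{K_0}=\langle e_2\rangle$ (so that $L$ fixes $e_1$ and $K$ fixes $e_2$, exactly as in the slice matrices), which reduces the conjugation freedom to the subgroup of $\PGL(3,\CC)$ stabilizing the two points $[e_1],[e_2]\in\PP^2$. One then normalizes the induced actions of $K$ and $L$ on the quotient planes $\CC^3/p_{K_0}$ and $\CC^3/p_{L_0}$ to the fixed $2\times 2$ blocks occurring in $K$ and $L$. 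The claim to be verified is that these normalizations exhaust the remaining freedom up to the center $\mu_3$, so that the conjugating matrix, and hence the point $(x_0,x_1,y_0,y_1)\in X$, is uniquely determined by $\rho$.

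Finally, I would establish this rigidity (and thereby finish the proof) through the equivalent but more mechanical route of producing an explicit rational inverse: solving the polynomial system $\nu=\tr(K^{-1}L)$, $\bar\nu=\tr(KL^{-1})$, $\zeta=\tr([K,L])$ for $(x_0,x_1,y_0,y_1)$ over a dense open subset of $\mathcal{W}$, using the two generators of $I$ to eliminate $x_1$ and $y_1$ and then solving for $x_0$ and $y_0$; this is the computation recorded in \cite{fig8html}. Such formulas show that the pullback $\Psi^*\co\CC(\mathcal{W})=\CC(\nu,\bar\nu,\zeta)\to\CC(X)$ is surjective, hence an isomorphism of function fields, which is precisely the birationality of $\Psi$.
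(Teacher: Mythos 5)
Your proposal is correct and, in the step that actually carries the proof, coincides with the paper's argument: after the same preliminary checks (the slice matrices define representations of $D(3,3,4)$ whose characters land in $\mathcal{W}$, both varieties being irreducible surfaces), you conclude by exhibiting the explicit rational inverse solving $(\nu,\bar\nu,\zeta)$ for $(x_0,x_1,y_0,y_1)$ --- exactly the computation recorded in \cite{fig8html}, which the paper presents as the map $g\co\mathcal{W}\to X$ defined off a union of six lines. The intermediate scaffolding on generic injectivity via normal-form rigidity is logically superfluous once that inverse is produced, since a rational inverse already yields the isomorphism of function fields directly, which is precisely how the paper proceeds.
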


\begin{proof}
We write the projection restricted to $X$ as a regular map
$f\colon  X\to \mathcal{W} \subset \CC^3$ given by
\[
f(x_0,x_1,y_0,y_1) = \big(\nu(x_0,x_1,y_0,y_1), 
\bar\nu(x_0,x_1,y_0,y_1), \zeta(x_0,x_1,y_0,y_1)\big)
\]
where we have used the parameters of Lemma~\ref{lem:D334}
\[
\nu=\tr\rho_{(x_0,x_1,y_0,y_1)}(k^{-1}l),\ 
\bar\nu=\tr\rho_{(x_0,x_1,y_0,y_1)}(k l^{-1})\ \text{ and }\ 
\zeta =\tr\rho_{(x_0,x_1,y_0,y_1)}([k,l])\,.
\]
In the ambient coordinates the map $f$ is given by
\begin{align*}
\nu(x_0,x_1,y_0,y_1) &= x_{0} y_{0} -  x_{1} y_{0} + x_{0} + y_{1} - 2,
\\
\bar \nu(x_0,x_1,y_0,y_1)  &= x_{0} y_{1} -  x_{1} + y_{0} -  y_{1} + 1,
\\
\zeta(x_0,x_1,y_0,y_1)&=  x_{0}^{2} y_{0} y_{1} -  x_{0} x_{1} y_{0} y_{1} -  x_{0}^{2} y_{0} +
x_{0} y_{0}^{2} -  x_{1} y_{0}^{2}  \\
& \quad
+ x_{0} x_{1} y_{1} -  x_{1}^{2}
y_{1} -  x_{0} y_{0} y_{1} + x_{1} y_{0} y_{1} + x_{0} y_{1}^{2}  \\
& \qquad
- 
x_{0} x_{1} + 2 x_{0} y_{0} -  x_{1} y_{0} - 3 x_{0} y_{1} + 2 x_{1}
y_{1} + y_{0} y_{1} -  y_{1}^{2}  \\
& \qquad\quad
+ 4 x_{0} -  x_{1} - 2 y_{0} - 2\,.
\end{align*}

%  Defn of f: Defined on coordinates by sending (x0, x1, y0, y1) to
%  f(x0, x1, y0, y1) = ( x0*y0 - x1*y0 + x0 + y1 - 2, x0*y1 - x1 + y0 - y1 + 1,
%    x0^2*y0*y1 - x0*x1*y0*y1 - x0^2*y0 + x0*y0^2 - x1*y0^2 + x0*x1*y1 -
%   x1^2*y1 - x0*y0*y1 + x1*y0*y1 + x0*y1^2 - x0*x1 + 2*x0*y0 - x1*y0 -
%    3*x0*y1 + 2*x1*y1 + y0*y1 - y1^2 + 4*x0 - x1 - 2*y0 - 2)

The rational inverse  is the map $g\colon \mathcal{W}\to X$ given by
\begin{multline*}
g(\nu,\bar\nu,\zeta) = 
\Big(\frac{\nu^{2} + \nu\bar\nu - 2 \bar\nu -  \zeta - 3}{\zeta - 1},
 \frac{{\nu}^{2} -  {\bar{\nu}}^{2} + 2 {\nu} - 2 {\bar{\nu}} + {\zeta} - 1}{{\zeta} - 1},\\
 \frac{{\nu} {\bar{\nu}} -  {\bar{\nu}}^{2} + 2 {\nu} - 2 {\zeta} - 2}{- {\nu} {\bar{\nu}} + {\zeta} + 3},
 \frac{- {\nu}^{2} + 2 {\bar{\nu}} -  {\zeta} + 1}{- {\nu} {\bar{\nu}} + {\zeta} + 3}\Big)\,.
\end{multline*}
%Defn of g: Defined on coordinates by sending (nu,nub,zeta) to
%    g(nu,nub,zeta)=    ((nu*nub + nub^2 - 2*nu - zeta - 3)/(zeta - 1), 
% 		(-nu^2 + nub^2 - 2*nu + 2*nub + zeta - 1)/(zeta - 1), 
%			(-nu^2 + nu*nub + 2*nub - 2*zeta - 2)/(-nu*nub + zeta + 3), 
%				(-nub^2 + 2*nu - zeta + 1)/(-nu*nub + zeta + 3))
The map $g$ is defined off the algebraic set 
$Y=(X\cap\{ {\zeta}=1\})\cup (X\cap\{ {\nu} {\bar{\nu}} = {\zeta} + 3 \})$. 
The decomposition of $Y=Y_1\cup\cdots\cup Y_6$  into irreducible components
is obtained by computer supported calculations \cite{fig8html}, and is given by
\begin{alignat*}{2}
Y_1 &= \mathbf{V}(  {\zeta} - 1,\ {\nu} + {\bar{\nu}} + 2 ), &
Y_2 &= \mathbf{V}(  {\zeta} - 1,\  {\nu} + \eta^2\,{\bar{\nu}} -2\,\eta ),\\
Y_3 &= \mathbf{V}(  {\zeta} - 1,  {\nu} -\eta\,{\bar{\nu}} + 2\,\eta^2  ),&
Y_4 &= \mathbf{V}(   {\nu} + {\bar{\nu}} + 2,  {\bar{\nu}}^2 + 2\,{\bar{\nu}} + {\zeta} + 3 ),\\
Y_5 &= \mathbf{V}(   {\nu} + \eta^2\,{\bar{\nu}} -2\,\eta,  {\bar{\nu}}^2 + 2\,\eta^2\,{\bar{\nu}} -\eta\,{\zeta} -3\,\eta ),&\quad
Y_6 &= \mathbf{V}(   {\nu} -\eta\,{\bar{\nu}} + 2\,\eta^2,  {\bar{\nu}}^2 -2\,\eta\,{\bar{\nu}} + \eta^2\,{\zeta} + 3\,\eta^2)\,.
\end{alignat*}
Each $Y_i$ is isomorphic to an affine line and $\eta$ is a primitive $6$th root of unity.
\end{proof}
%[
%Closed subscheme of Affine Space of dimension 3 over Cyclotomic Field of order 6 and degree 2 defined by:
%  t - 1,
%  y + z + 2,
%Closed subscheme of Affine Space of dimension 3 over Cyclotomic Field of order 6 and degree 2 defined by:
%  t - 1,
%  y + (zeta6 - 1)*z + (-2*zeta6),
%Closed subscheme of Affine Space of dimension 3 over Cyclotomic Field of order 6 and degree 2 defined by:
%  t - 1,
%  y + (-zeta6)*z + (2*zeta6 - 2),
%Closed subscheme of Affine Space of dimension 3 over Cyclotomic Field of order 6 and degree 2 defined by:
%  y + z + 2,
%  z^2 + 2*z + t + 3,
%Closed subscheme of Affine Space of dimension 3 over Cyclotomic Field of order 6 and degree 2 defined by:
%  y + (zeta6 - 1)*z + (-2*zeta6),
%  z^2 + (2*zeta6 - 2)*z + (-zeta6)*t + (-3*zeta6),
%Closed subscheme of Affine Space of dimension 3 over Cyclotomic Field of order 6 and degree 2 defined by:
%  y + (-zeta6)*z + (2*zeta6 - 2),
%  z^2 + (-2*zeta6)*z + (zeta6 - 1)*t + (3*zeta6 - 3)
%]

This permits to give explicitly a representation  $\Gamma\to\SL(3,\CC)$ which corresponds to a given point $\mathcal{W}\setminus Y$.
\begin{exemple} Let us compute all irreducible representations $\rho\colon\Gamma\to\SL(3,\CC)$
such that $\chi_\rho\in V_1$ and $\tr\rho(m)=\tr\rho(m^{-1})=3$ i.e.\ $\nu=\bar\nu=3$.
By Proposition~\ref{prop:non-dist-components} we obtain $\zeta^2-7\zeta+14=0$ and hence
$\zeta_\pm = 7/2\pm i \sqrt{7}/2$. Now,
\[ 
g(3,3,\zeta_\pm) = \left(\frac{3}{2} \pm i \frac{\sqrt{7}}{2}  + , 1, 
\frac{1}{2} \mp i \frac{\sqrt{7}}{2}, \frac{3}{2} \mp i \frac{\sqrt{7}}{2}\right)\,.
\]
Hence 
\[ \rho(S)=
\begin{pmatrix}
 \frac{\sqrt{7}}{2} i  + \frac{3}{2} & 1 & 1 \\
 \frac{\sqrt{7}}{2} i  + \frac{5}{2} & -\frac{\sqrt{7}}{2} i  + \frac{5}{2} & 1 \\
 -\frac{\sqrt{7}}{2} i  - \frac{5}{2} & \frac{\sqrt{7}}{2} i  - \frac{3}{2} & -1
\end{pmatrix}, \qquad
 \rho(T)=
\begin{pmatrix}
 \frac{\sqrt{7}}{2} i  + \frac{5}{2} & -\frac{\sqrt{7}}{2} i  + \frac{5}{2} & 1 \\
 1 & -\frac{\sqrt{7}}{2} i  + \frac{3}{2} & 1 \\
 -\frac{\sqrt{7}}{2} i  - \frac{3}{2} & \frac{\sqrt{7}}{2} i  - \frac{5}{2} & -1
 \end{pmatrix}\,.
\]
Moreover, we obtain
\[
\rho(\ell) =
\begin{pmatrix}
3 i \, \sqrt{7} - 2 & \frac{3\sqrt{7}}{2} i + \frac{9}{2} & \frac{3\sqrt{7}}{2} i + \frac{3}{2} \\
3 i \, \sqrt{7} + 15 & -3 i \, \sqrt{7} + 10 & 9 \\
-\frac{3\sqrt{7}}{2} i - \frac{15}{2} & \frac{3\sqrt{7}}{2} i - \frac{9}{2} & -5
\end{pmatrix}
\]
is also an unipotent matrix. These representations where previously studied by Deraux and 
Falbel in connection with spherical CR structures on the complement of the figure eight knot
\cite{D1,D2,DF}.
\end{exemple}

\bibliographystyle{plain}
%  This inserts the bib file def2.bib
%  \bibliography{biblio-fig8}
%   \end{document}

\end{document}